\newcommand{\bi}{{\bf i}}
\newcommand{\bj}{{\bf j}}
\newcommand{\bk}{{\bf k}}
\newcommand{\bc}{{\mathbb C}}
\newcommand{\br}{{\mathbb R}}
\newcommand{\bh}{{\mathbb H}}
\newtheorem{thm}{Theorem}[section]
\newtheorem{lem}{Lemma}[section]
\newtheorem{pro}{Proposition}[section]
\newtheorem{exam}{Example}[section]
\newtheorem{defi}{Definition}[section]
\begin{document}

\title{Quadratic equation  in  split  quaternions}
\author{ Wensheng Cao \\
School of Mathematics and Computational Science,\\
Wuyi University, Jiangmen, Guangdong 529020, P.R. China\\
e-mail: {\tt wenscao@aliyun.com}}
\date{}
\maketitle

\bigskip
{\bf Abstract} \,\,  Generally  speaking, it is difficult to solve equations in  algebras which is
noncommutative and contains nontrivial zero divisors. Split quaternion algebra  plays an important  role in a modern
physics, however, it is noncommutative and contains nontrivial zero divisors. In this paper,  we derive
explicit formulas for computing the roots of $ax^{2}+bx+c=0$ in split  quaternion algebra.

\vspace{1mm}\baselineskip 12pt

{\bf Keywords and phrases:} \  Split quaternion, Quadratic formula, Zero divisor, Solving polynomial equation

{\bf Mathematics Subject Classifications  (2010):}\ \ {\rm 15A33; 11R52}

\section{Introduction}
\subsection{Split quaternions}
Let $\br$ and $\bc$ be the field of  real and  complex numbers, respectively.
The split quaternion algebra is a non-commutative extension of the complex numbers.
Such an  algebra is a 4-dimensional associative algebra original introduced by James Cockle [4] in 1849.
Split quaternions can be represented as
$$\bh_s=\{x=x_0+x_1\bi+x_2\bj+x_3\bk,x_i\in \br,i=0,1,2,3\},$$
where $1,\bi,\bj,\bk$ are basis of $\bh_s$ satisfying the following equalities
\begin{equation}\label{rule}
\bi^2=-\bj^2=-\bk^2=-1, \bi\bj=\bk=-\bj\bi,\bj\bk=-\bi=-\bk\bj, \bk\bi=\bj=-\bi\bk.
\end{equation}
   Let $\bar{x}=x_0-x_1\bi-x_2\bj-x_3\bk$ be the conjugate of $x$ and
   \begin{equation}\label{Ix}I_x=\bar{x}x=x\bar{x}=x_0^2+x_1^2-x_2^2-x_3^2.\end{equation}
Obviously $\bc=\br\oplus\br\bi$, likewise  $\bh_s=\bc\oplus\bc\bj$ and $\bj z=\bar{z}\bj$ for $z\in \bc$.
 That is,  a split quaternion can be expressed  as
$$x=(x_0+x_1\bi)+(x_2+x_3\bi)\bj =z_1+z_2\bj=z_1+\bj\overline{z_2},z_1,z_2\in \bc.$$
A split quaternion is spacelike, timelike or lightlike  if $I_x<0, I_x > 0$ or $I_x = 0$, respectively.
 It can be easily verified that $$ \overline{xy}=\bar{y}\bar{x},\  I_{yx}=I_yI_x, \forall x,y\in\bh_s.$$
Let $$\Re(x)=(x+\bar{x})/2=x_0,\ \Im(x)=(x-\bar{x})/2=x_1\bi+x_2\bj+x_3\bk$$ be the real part and  imaginary part of $x$.
Then we have   $$\Re(xy)=\Re(yx)=x_0y_0-x_1y_1+x_2y_2+x_3y_3,\forall x,y\in \bh_s.$$

 Unlike the Hamilton quaternion algebra, the split quaternion algebra contains nontrivial zero divisors,
 nilpotent elements, and idempotents. For example, $\frac{1+\bj}{2}$ is an idempotent zero divisor, and $\bi-\bj$ is
 nilpotent.
  The set of zero divisors is denoted by  \begin{equation}
 Z(\bh_s)=\{x\in \bh_s:I_x=0\}.
 \end{equation}
 If $I_x\neq 0$ then $x$ is invertible and its inverse is  \begin{equation}x^{-1}=\frac{\bar{x}}{I_x}.\end{equation}
   If  $I_x=0$ then $x$ is not invertible.
    Cao and Chang [2]  defined its  Moore-Penrose inverse $x^+$ and  used it to solve some simple linear equations.

 For $x=x_0+x_1\bi+x_2\bj+x_3\bk,y=y_0+y_1\bi+y_2\bj+y_3\bk\in \bh_s$,
 we  define    \begin{equation}\label{inp2}\left\langle x,y \right\rangle=x_0y_0+x_1y_1-x_2y_2-x_3y_3.\end{equation}
For the sake of simplification with  little ambiguity, we also denote $$P_{xy}=P_{x,y}=\left\langle x,y \right\rangle.$$
Then we have
$$I_x=\left\langle x,x\right\rangle=P_{xx},\ \Re({\bar{y}x})=\Re({\bar{x}y})
=\Re({y\bar{x}})=\left\langle x,y \right\rangle=P_{xy}=P_{yx}.$$
Eq.(\ref{inp2}) can be thought of as the inner product of  the real vector 4-space  $\br^{2,2}$.
From this point of view, the split quaternion algebra is an algebraic presentation of $\br^{2,2}$.
 Similarly, we define  \begin{equation}\label{inp1} K_{xy}=K_{x,y}=-x_1y_1+x_2y_2+x_3y_3,\ \  M_x=K_{xx}.\end{equation}
 Then the imaginary part of split quaternions $\Im(\bh_s)$ with the inner product $K_{x,y}$
 can be thought of as the Minkowski  3-space  $\br^{2,1}$.
 Hence the  split quaternion algebra is closely related to these spaces $\br^{2,1}$,  $\br^{3,1}$ and $\br^{2,2}$ and  plays  an important   role in  modern  physics [1,5,8,10].

\subsection{Quadratic equations in $\br,\bc,\bh$}
 In algebra, a quadratic equation is any equation having the form
\begin{equation}\label{mainqeq}ax^{2}+bx+c=0,\end{equation}
where $x$ represents an unknown,  $a,b$ and $c$ represent known numbers and $a\neq 0$.

In real number setting, Eq.(\ref{mainqeq}) is solvable if and only if its discriminant $b^2-4ac\ge 0$.
 In complex number setting, by the fundamental theorem of algebra,  Eq.(\ref{mainqeq}) is always
 solvable,
  and its roots are expressed by the quadratic formula
\begin{equation}\label{rootforcom}x_{1,2}=\frac{-b\pm\sqrt{b^2-4ac}}{2a}.\end{equation}
By the well-known Vieta's formulas, the roots $x_{1},x_{2}$  satisfy
$$ x_{1}+x_{2}=\frac{-b}{a},\quad x_{1}x_{2}=\frac{c}{a}.$$

  The  quadratic equation  has been investigated  in  Hamilton quaternion  setting  in [6,9,12].
Huang and So [6] considered  $x^2+bx+c=0$ and  obtained explicit formulas of its roots.
 These formulas had been used in the classification of quaternionic M\"obius transformations [3,11].

\subsection{Quadratic equation in $\bh_s$}
It is interesting to consider the quadratic equation (\ref{mainqeq}) in split quaternions $\bh_s$.
In an algebra system, finding the roots of  the quadratic equation always connects with
 the factorizability of  quadratic polynomial [7].
 In $\br$ and $\bc$, the two problems are identical.
 In noncommutative algebra, the two problems are relevant.
  Scharler etc.[13] have considered  the factorizability of quadratic split quaternion polynomial.
  The result reveals some information of the roots of  split quaternion quadratic equation.
   Since the split quaternion algebra is noncommutative and contains nontrivial zero divisors,
    factorizability of quadratic split quaternion polynomial is quite different from finding its roots.

 In this paper, we will focus on deriving explicit formulas of the roots of  the quadratic equation $$ax^2+bx+c=0,a,b,c\in\bh_s.$$
   We believe  that   these formulas are important and  valuable as split quaternions is so important algebra
   in modern physics. However, this is a great challenge because there are  at least three difficulties to overcome:
  \begin{itemize}
    \item Too many parameters. There are three parameters $a,b,c\in \bh_s$, which amounts to twelve real parameters,
     in  $ax^2+bx+c=0$. Too many parameters make things complicated and difficult.
    \item Noncommutativity of split quaternions.
    \item Noninvertibility  of split quaternions in $Z(\bh_s)$.
  \end{itemize}

 We will use the following strategies to overcome these difficulties and find the roots of $ax^2+bx+c=0$.

\subsubsection{Reduce the number of  parameters}
    The first strategy we will use is to reduce the number of  parameters in $ax^2+bx+c=0$ to simplify our consideration.
    We will show that we only need to consider four types of quadratic equations.

      If $a$ is invertible, then  $ax^2+bx+c=0$ can be reformulated as $x^2+a^{-1}bx+a^{-1}c=0$.
      Therefore we only need to consider the following two types of quadratic equations:
  \begin{itemize}
    \item $x^2+bx+c=0;$
    \item $ax^2+bx+c=0, a\in Z(\bh_s)-\{0\}.$
  \end{itemize}

For the first type $x^2+bx+c=0$ with $b\notin \br$, we have the following proposition.
\begin{pro}\label{reducepro1} The quadratic equation $$y^2+dy+f=0,d=d_0+d_1\bi+d_2\bj+d_3\bj\notin \br$$ is solvable
 if and only if the quadratic equation $$x^2+bx+c=0,b=\Im(d)\neq 0,c=f-\frac{d_0}{2}(d-\frac{d_0}{2})$$ is solvable.
 If the quadratic equation $x^2+bx+c=0$ is solvable and $x$ is a solution then $y= x-\frac{d_0}{2}$ is a solution of
 $y^2+dy+f=0$.
\end{pro}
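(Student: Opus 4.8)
The plan is to treat this as a classical ``completing the square'' reduction, carried out with a \emph{real} shift so that noncommutativity never enters the computation. The crucial observation is that $d_0/2\in\br$ commutes with every split quaternion; hence translating the unknown by a real constant behaves exactly as it does over a field, and the only genuine content is verifying that the two resulting coefficients are precisely the stated $b$ and $c$.

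First I would set $s=-d_0/2\in\br$ and substitute $y=x+s$ into the left-hand side of $y^2+dy+f=0$. Since $s$ is real we have $(x+s)^2=x^2+2sx+s^2$ and $d(x+s)=dx+ds$, so
\[
y^2+dy+f=x^2+(2s+d)x+\bigl(s^2+ds+f\bigr).
\]
Next I would simplify the two coefficients. For the linear term, $2s+d=-d_0+d=\Im(d)=b$, where the hypothesis $d\notin\br$ guarantees $b=\Im(d)\neq 0$. For the constant term, $s^2+ds+f=\tfrac{d_0^2}{4}-\tfrac{d_0}{2}d+f=f-\tfrac{d_0}{2}\bigl(d-\tfrac{d_0}{2}\bigr)=c$. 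This yields the polynomial identity
\[
y^2+dy+f=x^2+bx+c\qquad\text{whenever } y=x-\tfrac{d_0}{2}.
\]

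From this identity both halves of the proposition follow at once. If $x$ solves $x^2+bx+c=0$, then the choice $y=x-\tfrac{d_0}{2}$ makes the left-hand side vanish, so $y$ solves $y^2+dy+f=0$; conversely, given a solution $y$ of the latter, $x=y+\tfrac{d_0}{2}$ solves the former. Hence the two equations are simultaneously solvable, and the displayed substitution transports roots between them, which is exactly the second assertion. I do not anticipate any real obstacle here: the argument is algebraic bookkeeping, and the single point requiring care---that left multiplication by $d$ and by the real scalars $2s$ and $s^2$ combine correctly---is settled precisely because the shift $s$ is real and therefore central in $\bh_s$.
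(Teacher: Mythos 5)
Your proof is correct and is essentially the paper's own argument: both complete the square via the real (hence central) shift $y=x-\tfrac{d_0}{2}$ and check that the resulting linear and constant coefficients are $\Im(d)$ and $f-\tfrac{d_0}{2}(d-\tfrac{d_0}{2})$. You merely write out the expansion more explicitly than the paper does.
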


\begin{proof}
Rewriting
 $y^2+dy+f=0$ as
 $$(y+\frac{d_0}{2})^2+\Im(d)(y+\frac{d_0}{2})+f-\frac{d_0}{2}(d-\frac{d_0}{2})=0$$
 and letting $x=y+\frac{d_0}{2}$,  $b=\Im(d)$ and $c=f-\frac{d_0}{2}(d-\frac{d_0}{2})$, we prove  this proposition.
 \end{proof}
Hence for the first type we only need to solve the following equations:
 \begin{itemize}
    \item  Equation I: \quad  $x^2+bx+c=0, b\in \br;$
    \item  Equation II: \quad $x^2+bx+c=0, b=b_1\bi+b_2\bj+b_3\bk\neq 0.$
  \end{itemize}

For the second type, we have the following proposition.

\begin{pro}\label{reducepro2} The quadratic equation $dy^2+ey+f=0$ with  $d=d_1+d_2\bj\in Z(\bh_s)-\{0\}$,
$d_1,d_2\in \bc$ is solvable if and only if the quadratic equation $$ax^2+bx+c=0$$ is solvable, where
$$d_1^{-1}e=k_0+k_1\bi+k_2\bj+k_3\bk, k_i\in \br,i=0,\cdots,3$$ and
$$a=1+d_1^{-1}d_2\bj,\, b=d_1^{-1}e-k_0(1+d_1^{-1}d_2\bj),\,
c=d_1^{-1}f-\frac{d_1^{-1}ek_0}{2}+\frac{(1+d_1^{-1}d_2\bj)k_0^2}{4}.$$
If the quadratic equation $ax^2+bx+c=0$ is solvable and $x$ is a  solution then
$y= x-\frac{k_0}{2}$ is a solution of $dy^2+ey+f=0$.
\end{pro}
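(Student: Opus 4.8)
The plan is to imitate the reduction carried out in Proposition~\ref{reducepro1}: first renormalize the leading coefficient by a single left multiplication so that its ``complex part'' becomes $1$, and then remove the real part of the linear coefficient by a real translation of the variable. Before doing either, I would record that $d_1$ is invertible. Since $d=d_1+d_2\bj\in Z(\bh_s)$ we have $I_d=|d_1|^2-|d_2|^2=0$, so $|d_1|=|d_2|$; if $d_1=0$ then $d_2=0$ and hence $d=0$, contradicting $d\neq 0$. Thus $d_1\in\bc\setminus\{0\}$ possesses an inverse $d_1^{-1}\in\bc\subset\bh_s$, and every occurrence of $d_1^{-1}$ in the statement is legitimate.

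Next I would left-multiply $dy^2+ey+f=0$ by $d_1^{-1}$. This is the one place where noncommutativity must be watched, and it is precisely why the multiplication is performed \emph{on the left}: because $d_1$ is invertible, $x\mapsto d_1^{-1}x$ is a bijection of $\bh_s$ fixing $0$, so the transformed equation has exactly the same solution set (multiplying back by $d_1$ recovers the original). Computing $d_1^{-1}d=d_1^{-1}(d_1+d_2\bj)=1+d_1^{-1}d_2\bj=a$, the equation becomes
\[
a\,y^2+(d_1^{-1}e)\,y+d_1^{-1}f=0 .
\]
I would note in passing that $I_a=1-|d_1^{-1}d_2|^2=1-|d_2|^2/|d_1|^2=0$, so $a$ is again a nonzero zero divisor; this confirms that the reduction keeps us inside the intended second type $ax^2+bx+c=0$ with $a\in Z(\bh_s)-\{0\}$.

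Finally I would complete the square via the substitution $y=x-\tfrac{k_0}{2}$, where $k_0=\Re(d_1^{-1}e)$ is the real part appearing in $d_1^{-1}e=k_0+k_1\bi+k_2\bj+k_3\bk$. Since $k_0\in\br$ commutes with every split quaternion, $(x-\tfrac{k_0}{2})^2=x^2-k_0x+\tfrac{k_0^2}{4}$ with no ordering ambiguity, so expanding
\[
a\Big(x-\tfrac{k_0}{2}\Big)^2+(d_1^{-1}e)\Big(x-\tfrac{k_0}{2}\Big)+d_1^{-1}f=0
\]
and collecting the left coefficient of $x$ yields $ax^2+bx+c=0$ with $b=d_1^{-1}e-k_0a$ and $c=d_1^{-1}f-\tfrac{d_1^{-1}e\,k_0}{2}+\tfrac{a\,k_0^2}{4}$, which are exactly the stated values once $a=1+d_1^{-1}d_2\bj$ is inserted. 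As a translation is invertible, this substitution preserves solvability in both directions, and a solution $x$ of $ax^2+bx+c=0$ returns the solution $y=x-\tfrac{k_0}{2}$ of $dy^2+ey+f=0$. I do not anticipate a genuine obstacle here: the argument is a two-step bookkeeping reduction, and the only substantive checks are that left multiplication by the invertible $d_1^{-1}$ is a two-sided equivalence and that the purely real shift $k_0/2$ avoids any commutation difficulty.
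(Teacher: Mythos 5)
Your proposal is correct and follows essentially the same route as the paper's proof: establish that $d_1$ is invertible from $I_{d_1}=I_{d_2}\neq 0$, left-multiply by $d_1^{-1}$ to normalize the leading coefficient to $a=1+d_1^{-1}d_2\bj$, then translate $y=x-\tfrac{k_0}{2}$ and expand. The only (inessential) difference is that the paper additionally records $\Re(b)=0$ and writes $a=1+a_2\bj+a_3\bk$ with $a_2^2+a_3^2=1$, observations used later to classify the result into Equations III and IV rather than to prove the proposition itself.
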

\begin{proof}
Since $d=d_1+d_2\bj\in Z(\bh_s)-\{0\}$, we have $I_{d_1}=I_{d_2}\neq 0$ and $d_1$ is invertible. Hence
 $dy^2+ey+f=0$ is equivalent to $$(1+d_1^{-1}d_2\bj)y^2+d_1^{-1}ey+d_1^{-1}f=0.$$ Let $y=x-\frac{k_0}{2}.$
 Then $dy^2+ey+f=0$ is equivalent to
 $$(1+d_1^{-1}d_2\bj)\big(x^2-k_0x+\frac{k_0^2}{4}\big)+d_1^{-1}e\big(x-\frac{k_0}{2}\big)+d_1^{-1}f=0.$$
 That is
  $$(1+d_1^{-1}d_2\bj)x^2+[d_1^{-1}e-k_0(1+d_1^{-1}d_2\bj)]x+d_1^{-1}f
  -\frac{d_1^{-1}ek_0}{2}+\frac{(1+d_1^{-1}d_2\bj)k_0^2}{4}=0.$$
 Let $$d_1^{-1}d_2=a_2+a_3\bi, a_2,a_3\in \br.$$ Then we have $a_2^2+a_3^2=1$ and
 $$a=1+d_1^{-1}d_2\bj=1+a_2\bj+a_3\bk\in Z(\bh_s).$$
 Since $\Re[d_1^{-1}e-k_0(1+d_1^{-1}d_2\bj)]=0$,  we have $$b=d_1^{-1}e-k_0(1+d_1^{-1}d_2\bj)=b_1\bi+b_2\bj+b_3\bk.$$
  \end{proof}
Hence for the second type we only need to solve the following equations:
 \begin{itemize}
    \item  Equation III: \quad  $ax^2+c=0, a=1+a_2\bj+a_3\bk\in Z(\bh_s)$;
    \item  Equation IV: \quad $ax^2+bx+c=0, a=1+a_2\bj+a_3\bk\in Z(\bh_s), b=b_1\bi+b_2\bj+b_3\bk\neq 0$.
  \end{itemize}

\subsubsection{Two real nonlinear systems}
The second strategy we will use is to reformulate $ax^2+bx+c=0$ as two real nonlinear systems.
Any solutions $x=x_0+x_1\bi+x_2\bj+x_3\bk$ of $ax^2+bx+c=0$ must fall into two categories:
 \begin{itemize}
  \item   $2x_0a+b\in Z(\bh_s)$;
    \item  $2x_0a+b\in \bh_s- Z(\bh_s)$.
     \end{itemize}

For Equations II and  IV, we define  that
$$SZ=\{x\in \bh_s:ax^2+bx+c=0 \mbox{ and }2x_0a+b\in Z(\bh_s)\}$$ and
$$SI=\{x\in \bh_s:ax^2+bx+c=0 \mbox{ and }2x_0a+b\in \bh_s-Z(\bh_s)\}.$$

Observe  that \begin{equation}x^2=x(2x_0-\bar{x})=2x_0x-I_x.\end{equation}
 Therefore  $ax^2+bx+c=0$  becomes \begin{equation}\label{linearx1} (2x_0a+b)x=aI_x-c.\end{equation}
Let
            \begin{eqnarray}\label{Nf1}N&=&I_x=\bar{x}x,\\
    \label{Tf1}T&=&\bar{x}+x=2x_0.\end{eqnarray}

If $2x_0a+b\in \bh_s-Z(\bh_s)$, then by (\ref{linearx1}) we have
\begin{equation}x=(2x_0a+b)^{-1}(aI_x-c)=(Ta+b)^{-1}(aN-c)=\frac{(T\bar{a}+\bar{b})(aN-c)}{T^2I_a+2TP_{ab}+I_b}\end{equation}
        and \begin{equation}\bar{x}=\frac{(N\bar{a}-\bar{c})(Ta+b)}{T^2I_a+2TP_{ab}+I_b}.\end{equation}
Substituting  the above formulas of $x$ and $\bar{x}$ in (\ref{Nf1}) and (\ref{Tf1}), we obtain
      \begin{eqnarray}
     \label{enf1} x\bar{x}&=&\frac{N^2I_a+I_c-2NP_{ac}}{T^2I_a+2TP_{ab}+I_b}=N,\\
     \label{enf2}
     x+\bar{x}&=&\frac{2TNI_a-T(\bar{a}c+\bar{c}a)+N(\bar{a}b+\bar{b}a)-(\bar{c}b+\bar{b}c)}{T^2I_a+2TP_{ab}+I_b}=T.
      \end{eqnarray}
Note that $$P_{xy}=\Re({\bar{y}x})=\frac{\bar{y}x+\bar{x}y}{2}.$$
Hence $(T,N)$ satisfies our first  real nonlinear  system:
      \begin{equation}\label{rsym1}\left\{
\begin{aligned}
    -I_aN^2+N(T^2I_a+2TP_{ab}+I_b+2P_{ac})-I_c=0,\\
     I_aT^3+2P_{ab}T^2+(2P_{ac}+I_b-2I_aN)T-2NP_{ab}+2P_{bc}=0.
  \end{aligned}
\right.\end{equation}

Since we aim to find a root of $ax^2+bx+c=0$, we do not know $x_0$ beforehand.
It is an embarrassing situation to assume that  $$2x_0a+b=Ta+b\in \bh_s-Z(\bh_s).$$
This embarrassing situation can be remedied as follows.
For Equations II and IV, we can solve the real  nonlinear system (\ref{rsym1}) to obtain the pair $(T,N)$.
 We can test whether or not $Ta+b\in \bh_s-Z(\bh_s)$. Only for the pair  $(T,N)$ such that $Ta+b\in \bh_s-Z(\bh_s)$,
  we obtain the corresponding  solution $ x=(Ta+b)^{-1}(aN-c).$

There may exist a solution $x=x_0+x_1\bi+x_2\bj+x_3\bk$ such that $2x_0a+b\in Z(\bh_s)$.
Such a situation is caused by the noninvertibility of split quaternions.
This property is a pivotal difference between  Hamilton quaternions and split quaternions.

If  $2x_0a+b\in Z(\bh_s)$ then
  \begin{equation}\label{ncasezero}\left\langle 2x_0a+b,2x_0a+b \right\rangle=4x_0^2I_a+4x_0P_{ab}+I_b=0.\end{equation}
Also we have
 \begin{equation}\label{nzacc}\left\langle aI_x-c,aI_x-c \right\rangle=I_aI_x^2-2I_xP_{ac}+I_c=0.\end{equation}
By Eq.(\ref{ncasezero}), we may know some information of $x_0$.
 For example, if  $P_{ab}\neq 0$ and $I_a=0$ then  $x_0=\frac{-I_b}{4P_{ab}}$.
However, such an information  is not enough to solve the quadratic equation.
  We will resort  to its natural real nonlinear system as followings.

Let $a=a_0+a_1\bi+a_2\bj+a_3\bk,$  $b=b_0+b_1\bi+b_2\bj+b_3\bk$,  $c=c_0+c_1\bi+c_2\bj+c_3\bk \in \bh_s$.
By the rule of multiplication (\ref{rule}), the  equation $ax^2+bx+c=0$ can be reformulated as our second real nonlinear
system:
\begin{equation}\label{rsym2}\left\{
\begin{aligned}
 &a_0(x_0^2-x_1^2+x_2^2+x_3^2)-2a_1x_0x_1+2a_2x_0x_2+2a_3x_0x_3+b_0x_0-b_1x_1+b_2x_2+b_3x_3+c_0=0,\\
  &             2a_0x_0x_1+a_1(x_0^2-x_1^2+x_2^2+x_3^2)-2a_2x_0x_3+2a_3x_0x_2+b_0x_1+b_1x_0-b_2x_3+b_3x_2+c_1=0,\\
 &2a_0x_0x_2-2a_1x_0x_3+a_2(x_0^2-x_1^2+x_2^2+x_3^2)+2a_3x_0x_1+b_0x_2-b_1x_3+b_2x_0+b_3x_1+c_2=0,\\
 &2a_0x_0x_3+2a_1x_0x_2-2a_2x_0x_1+a_3(x_0^2-x_1^2+x_2^2+x_3^2)+b_0x_3+b_1x_2-b_2x_1+b_3x_0+c_3=0.
\end{aligned}
\right.\end{equation}
Generally speaking,  Eqs.(\ref{rsym2}) is  a very  complicated real nonlinear system. It is  hard to solve it.
However, by our first strategy of simplification, we only need to consider several specific cases of $a,b,c$.

In fact, Equation I and Equation III are so special, by some properties of split quaternions,
we can solve them  directly by Eqs.(\ref{rsym2}).

For Equations II and IV, although $2x_0a+b\in Z(\bh_s)$ prevents us from using  the real system (\ref{rsym1}),
it compensates  us in simplifying real system (\ref{rsym2}).
In Equations II and IV, if the value of $x_0$ can be determined  by $2x_0a+b\in Z(\bh_s)$,
then we only need to find $x_1,x_2,x_3$ in Eqs.(\ref{rsym2}).
We can deduce  some linear relations of $x_1,x_2$ and $x_3$ form Eqs.(\ref{rsym2}), which can be used to  solve Eqs.(\ref{rsym2}), see pages 9 and 20 for more details.  If we know nothing about $x_0$ (for example, in case of  $I_a=0,P_{ab}=0$)
then we  can  also find some linear relation of $x_i,i=0,\cdots,3$, see Proposition 5.1. In these special cases,
 we can  obtain more relationships  of the coefficients $a,b,c$.
 These relationships  will help us to solve the real nonlinear system (\ref{rsym2}).

This strategy of using the above two real nonlinear systems help us to overcome the difficulty caused by the noncommutativity of split quaternions.
 Combining with the strategy of reducing the number of parameters, we also  partially overcome
  the difficulty  caused by noninvertibility  of split quaternions in $Z(\bh_s)$.
  Roughly speaking,  these strategies are our main tools in solving  $ax^2+bx+c=0$.

We list our  organization of this paper in the following table:

\begin{table}[!htbp]
	\centering
	\caption{The arrangement of this paper}
	\begin{tabular}{|c|c|c|c|}
		\hline
		Section &Type of Equation & Result&Examples of Theorem\\
		\hline
		2&Equation I& Theorem 2.1&Table 2 \\
 \hline
		\multirow{2}*{3}&Equation II for SZ&Theorem 3.1&Table 3\\
\cline{2-4}
 &Equation II for SI&Theorem 3.2&Tables 4 and 5\\
		\hline
4&Equation III& Theorem 4.1&Example 4.1\\
\hline
 		\multirow{6}*{5}&Equation IV with $P_{ab}\neq 0$ for SZ& Theorem 5.1&Examples 5.1 and 5.2\\
	\cline{2-4}
&Equation IV with $P_{ab}=0,b_1=a_2b_3-a_3b_2$ for SZ& Theorem 5.2&Examples 5.3 and 5.4\\
	\cline{2-4}
&Equation IV with $P_{ab}=0,b_1=a_3b_2-a_2b_3$ for SZ& Theorem 5.3&Examples 5.5 and 5.6\\
	\cline{2-4}
&Equation IV with $P_{ab}\neq 0$ for SI& Theorem 5.4&Example 5.7\\
	\cline{2-4}
&Equation IV with $P_{ab}=0,I_b+2P_{ac}\neq 0$ for SI& Theorem 5.5&Example 5.8\\
	\cline{2-4}
&Equation IV with $P_{ab}=0,I_b+2P_{ac}=0$ for SI& Theorem 5.6&Example 5.9\\
	\cline{2-4}
\hline
	\end{tabular}
\end{table}
In Table 1, for example,  Equation II for SZ means that   solving   Equation II for  $x\in SZ$,
the solvability conditions and the formulas of these solutions  are given in Theorem 3.1 and some examples
 are given in Table 3.
We remark that all examples in Table 1 are carefully chosen  to illustrate that all our formulas are work.

For Equation II and Equation IV, we need to consider their solutions in $SZ$ and $SI$, respectively.
 For this purpose, we choose the same quadratic equation in Table 3(2)I and Table 4(2)I, as well  Example 5.2 and Example
 5.7.
  The author has checked that all examples in Table 3 except Table 3 (2)I have no solution in $SI$;
 all examples in Tables 4 and  5 except Table 4 (2)I  do not satisfy Condition A, that is, these examples have no solution in
 $SZ$.
  We also have checked that Example 5.1 has no solution in $SI$.   In Theorem 5.2 and Theorem 5.3,
   we need implicit condition  $I_b=0$, while in Theorem 5.5 and Theorem 5.6,  we need implicit condition $I_b\neq 0$.
  These facts  mean that we have given all solutions of all examples in our paper.

\section{Equation I}
In this section, we consider Equation II. We begin with a definition.
\begin{defi}\label{def2.1}
	Let  $w=w_0+w_1\bi+w_2\bj+w_3\bk\in \bh_s$. We define that $\sqrt[s]{w}=\{x\in\bh_s:x^2=w\}.$
\end{defi}
By this definition, $\sqrt[s]{w}$ means the square root of $w$ in split quaternions.
We follow the conventional sign $\sqrt{z}$ for $z\in \br$ or $\bc$.
In [2,10], \"Ozdemir, Cao and Chang had obtained the root  of any split quaternions.
For our purpose, we rewrite these formulas of the square of  split quaternions  as follows.
\begin{pro}(cf.[2,10]) \label{prop2.1}
	Let  $w=w_0+w_1\bi+w_2\bj+w_3\bk\in \bh_s$.
	\begin{itemize}
		\item [(1)]  If $w\in \br$, that  is, $w=w_0$ then
		\begin{equation}\label{sqrtrn}\sqrt[s]{w_0}=\{x_1\bi+x_2\bj+x_3\bk:-x_1^2+x_2^2+x_3^2=w_0\}\
  \mbox{provided}\ w_0\le 0;\end{equation}
		\begin{equation}\label{sqrtrp}\sqrt[s]{w_0}=\{x_1\bi+x_2\bj+x_3\bk:
-x_1^2+x_2^2+x_3^2=w_0\}\cup \{\pm \sqrt{w_0}\}\
  \mbox{provided}\ w_0>0.\end{equation}
		
		\item [(2)] If $w\notin \br$ then $\sqrt[s]{w}\neq \emptyset$ if and only if $I_w\ge 0$ and $w_0+\sqrt{I_w}>0$.
		
		\begin{itemize}
			\item [(i)]  If $w_0-\sqrt{I_w}>0$ then
			\begin{equation}\label{sqrtnr2}\sqrt[s]{w}=\{\pm \frac{w+\sqrt{I_w}}{\sqrt{2(w_0+\sqrt{I_w})}}\}\cup \{\pm
\frac{w-\sqrt{I_w}}{\sqrt{2(w_0-\sqrt{I_w})}}\}.\end{equation}
			\item [(ii)] If $w_0+\sqrt{I_w}>0\ge w_0-\sqrt{I_w}$ then
			
\begin{equation}\label{sqrtnr1}\sqrt[s]{w}=\{\pm\frac{\sqrt{I_w}+w}{\sqrt{2(w_0+\sqrt{I_w})}}\}.\end{equation}
		\end{itemize}
	\end{itemize}
\end{pro}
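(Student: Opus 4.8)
The plan is to reduce the quaternionic equation $x^2=w$ to a pair of scalar/vector conditions by splitting into real and imaginary parts. Writing $x=x_0+\Im(x)$ and invoking the identity $x^2=2x_0x-I_x$ recorded earlier, I would first compute $\Re(x^2)=x_0^2-x_1^2+x_2^2+x_3^2$ and $\Im(x^2)=2x_0\Im(x)$. Thus $x^2=w$ is equivalent to the system consisting of the real equation $x_0^2+M_x=w_0$ together with the imaginary equation $2x_0\Im(x)=\Im(w)$, where $M_x=-x_1^2+x_2^2+x_3^2$ and I will repeatedly use the relation $I_w=w_0^2-M_w$.

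The first step is to dispose of the real case $w=w_0\in\br$ (part (1)). Here the imaginary equation becomes $2x_0\Im(x)=0$, which forces either $x_0=0$ or $\Im(x)=0$. If $\Im(x)=0$ then $x=x_0\in\br$ and the real equation reads $x_0^2=w_0$, contributing the roots $\pm\sqrt{w_0}$ exactly when $w_0>0$; if $x_0=0$ then $x$ is purely imaginary and the real equation becomes $-x_1^2+x_2^2+x_3^2=w_0$, which is solvable for every $w_0\in\br$. Collecting these two families yields the stated sets in (\ref{sqrtrn}) and (\ref{sqrtrp}), the real roots appearing only in the $w_0>0$ branch.

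For part (2), assume $w\notin\br$, so $\Im(w)\neq 0$. The imaginary equation $2x_0\Im(x)=\Im(w)$ now forces $x_0\neq 0$ and pins down $\Im(x)=\frac{1}{2x_0}\Im(w)$, so $\Im(x)$ is collinear with $\Im(w)$ and in particular $M_x=\frac{M_w}{4x_0^2}$. Substituting into the real equation and setting $t=x_0^2>0$ converts the whole problem into the single scalar quadratic $4t^2-4w_0t+M_w=0$, whose discriminant equals $16I_w$. Hence real values of $t$ exist precisely when $I_w\ge 0$, and they are $t_{\pm}=\frac{w_0\pm\sqrt{I_w}}{2}$. Each admissible $t$ produces two choices $x_0=\pm\sqrt{t}$, and back-substituting $\Im(x)=\frac{\Im(w)}{2x_0}$ gives $x=\frac{2t+\Im(w)}{2x_0}=\pm\frac{w\pm\sqrt{I_w}}{\sqrt{2(w_0\pm\sqrt{I_w})}}$ after using $2t_{\pm}=w_0\pm\sqrt{I_w}$ and $w=w_0+\Im(w)$.

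The decisive and most delicate step is the positivity bookkeeping for $t$, since only $t>0$ yields a genuine real $x_0$. Because $t_{+}\ge t_{-}$, nonemptiness of $\sqrt[s]{w}$ is equivalent to $I_w\ge 0$ together with $w_0+\sqrt{I_w}>0$, which I would verify is exactly the stated existence criterion. When $w_0-\sqrt{I_w}>0$ both $t_{\pm}$ are positive, yielding the four roots of case (i); when $w_0+\sqrt{I_w}>0\ge w_0-\sqrt{I_w}$ only $t_{+}$ survives, since the value $t_{-}=0$ would force $x_0=0$ and contradict $\Im(w)\neq 0$, yielding the two roots of case (ii). The remaining work is the routine algebraic simplification confirming the closed forms (\ref{sqrtnr2}) and (\ref{sqrtnr1}); I expect no conceptual obstacle there, only careful handling of the square-root normalizations.
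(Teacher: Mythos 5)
Your proposal is correct and follows essentially the same route as the paper's proof: split $x^2=w$ into the real equation $x_0^2-x_1^2+x_2^2+x_3^2=w_0$ and the imaginary equations $2x_0\Im(x)=\Im(w)$, treat $w\in\br$ by the dichotomy $x_0=0$ or $\Im(x)=0$, and for $w\notin\br$ reduce to the quadratic $4t^2-4w_0t+M_w=0$ in $t=x_0^2$ with discriminant $16I_w$, followed by the same positivity analysis of $t_{\pm}=\frac{w_0\pm\sqrt{I_w}}{2}$. The only difference is notational (you work with $\Im(w)$ and $M_w$ where the paper works componentwise), so no further comment is needed.
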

\begin{proof}
 	Note that $x^2=x(2x_0-\bar{x})=x_0^2-x_1^2+x_2^2+x_3^2+2x_0(x_1\bi+x_2\bj+x_3\bk)$. If $x\in \sqrt{w}$ then
 	\begin{equation}\label{addeq1}2x_0x_1=w_1,2x_0x_2=w_2,2x_0x_3=w_3,x_0^2-x_1^2+x_2^2+x_3^2=w_0.\end{equation} 	
 Observe that $w\in \br$ implies that $x_0=0$ or $x_1^2+x_2^2+x_3^2=0$.
  This obsevation proves Eqs.(\ref{sqrtrn}) and (\ref{sqrtrp}). 	
  	By (\ref{addeq1}), if $w\notin \br$ then $x_0\neq 0$.
   Consequently,   $$x_1=\frac{w_1}{2x_0},x_2=\frac{w_2}{2x_0},x_3=\frac{w_3}{2x_0}$$ and
   \begin{equation}\label{eqx02}4x_0^4-4x_0^2w_0-w_1^2+w_2^2+w_3^2=0.\end{equation}
    	Viewing Eq.(\ref{eqx02}) as real quadratic equation with unknown $x_0^2$, we get its discriminant  $16I_w$.
     If $I_w\ge 0$ and $w_0+\sqrt{I_w}> 0$ then Eq.(\ref{eqx02}) is solvable. 	 	If $w_0-\sqrt{I_w}>0$ then
 $$x_0^2=\frac{w_0-\sqrt{I_w}}{2},\ \ \mbox{or}\  x_0^2=\frac{w_0+\sqrt{I_w}}{2}.$$
 If $w_0+\sqrt{I_w}>0\ge w_0-\sqrt{I_w}$ then $$x_0^2=\frac{w_0+\sqrt{I_w}}{2}.$$
 In each case we have $x=\frac{1}{2x_0}(2x_0^2+w_1\bi+w_2\bj+w_3\bk)$. This observation concludes the proof.
\end{proof}

We are ready to give our quadratic formulas for the case $b\in \br$.
\begin{thm}\label{thm2.1} Equation I is solvable if and only if
 $$\sqrt[s]{\frac{b^2-4c}{4}}\neq \emptyset.$$ If  Equation I is solvable, its solution(s) can be given by
	\begin{equation}\label{rootfor}x=\frac{-b}{2}+\sqrt[s]{\frac{b^2-4c}{4}}.\end{equation}
In other words, the solutions of Equation I can be obtained by
formulas according to the following cases.
\begin{itemize}
	\item [(1)]
 If $b,c\in \br$ and $b^2<4c$, then
$$x=\frac{-b}{2}+x_1\bi+x_2\bj+x_3\bk,$$
where $-x_1^2+x_2^2+x_3^2=\frac{b^2-4c}{4}$ and $x_1,x_2,x_3\in \br$.

\item [(2)]  If $b,c\in \br$ and $b^2\ge 4c$, then the set of solutions is
$$\{x=\frac{-b\pm \sqrt{b^2-4c}}{2}\}\cup\{x\in \bh_s: x=\frac{-b}{2}+x_1\bi+x_2\bj+x_3\bk\},$$
where $-x_1^2+x_2^2+x_3^2=\frac{b^2-4c}{4}$ and $x_1,x_2,x_3\in \br$.

\item [(3)]  If $b\in \br$, $c=c_0+c_1\bi+c_2\bj+c_3\bk\notin \br$, $(b^2-4c_0)^2-16K(c)\geq 0$
 and $\frac{b^2-4c_0+\sqrt{(b^2-4c_0)^2-16K(c)} }{2}>0$,  then
the solutions are as follows.
\begin{itemize}
 \item [(i)] If $\frac{b^2-4c_0-\sqrt{(b^2-4c_0)^2-16K(c)} }{2}> 0$ then
$$x=\frac{1}{2}(-b\pm \rho_i)\mp\frac{1}{\rho_i} (c_1\bi+c_2\bj+c_3\bk),i=1,2,$$
where $\rho_{1,2}=\sqrt{\frac{b^2-4c_0\pm \sqrt{(b^2-4c_0)^2-16K(c)} }{2}}$.

\item [(ii)] If $\frac{b^2-4c_0+\sqrt{(b^2-4c_0)^2-16K(c)} }{2}> 0\ge\frac{b^2-4c_0-\sqrt{(b^2-4c_0)^2-16K(c)} }{2}$,
    then
$$x=\frac{1}{2}(-b\pm \rho) \mp\frac{1}{\rho} (c_1\bi+c_2\bj+c_3\bk),$$
where $\rho=\sqrt{\frac{b^2-4c_0+\sqrt{(b^2-4c_0)^2-16K(c)} }{2}}$.
\end{itemize}
\end{itemize}
\end{thm}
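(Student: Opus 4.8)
The plan is to reduce Equation~I, namely $x^2+bx+c=0$ with $b\in\br$, to a pure square-root computation so that Proposition~\ref{prop2.1} can be applied directly. First I would complete the square. Since $b\in\br$ is central, it commutes with every split quaternion, so the usual manipulation is valid: writing $x^2+bx+c=(x+\tfrac{b}{2})^2+c-\tfrac{b^2}{4}$, the equation becomes $(x+\tfrac{b}{2})^2=\tfrac{b^2-4c}{4}$. Setting $w=\tfrac{b^2-4c}{4}$ and $u=x+\tfrac{b}{2}$, solving Equation~I is \emph{exactly} the problem of finding $u\in\bh_s$ with $u^2=w$, i.e.\ $u\in\sqrt[s]{w}$. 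This immediately yields the solvability criterion $\sqrt[s]{\tfrac{b^2-4c}{4}}\neq\emptyset$ and the master formula $x=-\tfrac{b}{2}+\sqrt[s]{\tfrac{b^2-4c}{4}}$, so the abstract part of the theorem is essentially a one-line consequence of Proposition~\ref{prop2.1}.

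The remaining work is purely to unpack $\sqrt[s]{w}$ into the explicit case list by applying Proposition~\ref{prop2.1} to $w=\tfrac{b^2-4c}{4}$. I would organize the cases exactly as the statement does, according to whether $c\in\br$. When $b,c\in\br$ we have $w=\tfrac{b^2-4c}{4}\in\br$, so part~(1) of Proposition~\ref{prop2.1} governs: if $b^2<4c$ then $w<0$ and (\ref{sqrtrn}) gives the ``hyperboloid'' of purely imaginary square roots, producing case~(1); if $b^2\ge 4c$ then $w\ge 0$ and (\ref{sqrtrp}) adds the two real roots $\pm\sqrt{w}=\pm\tfrac{\sqrt{b^2-4c}}{2}$ on top of the imaginary ones, producing case~(2). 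The only bookkeeping here is to shift everything back by $-\tfrac{b}{2}$.

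When $c\notin\br$ (hence $w\notin\br$) I would invoke part~(2) of Proposition~\ref{prop2.1}. The key is to compute the two invariants of $w$ that appear there. Since $w=\tfrac{b^2-4c}{4}$ with $b\in\br$, its real part is $w_0=\tfrac{b^2-4c_0}{4}$ and its imaginary part is $-(c_1\bi+c_2\bj+c_3\bk)$. Using $I_w=w_0^2+M_w$ type relations together with definition (\ref{inp1}), a short computation gives $I_w=\tfrac{(b^2-4c_0)^2-16K(c)}{16}$, so that $\sqrt{I_w}=\tfrac14\sqrt{(b^2-4c_0)^2-16K(c)}$ and $2(w_0\pm\sqrt{I_w})=\tfrac{b^2-4c_0\pm\sqrt{(b^2-4c_0)^2-16K(c)}}{2}$. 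The solvability condition $I_w\ge 0$ and $w_0+\sqrt{I_w}>0$ then translates verbatim into $(b^2-4c_0)^2-16K(c)\ge0$ and $\tfrac{b^2-4c_0+\sqrt{(b^2-4c_0)^2-16K(c)}}{2}>0$. Substituting $w=w_0-(c_1\bi+c_2\bj+c_3\bk)$ into formulas (\ref{sqrtnr2}) and (\ref{sqrtnr1}), with the denominators $\rho_i=\sqrt{2(w_0\pm\sqrt{I_w})}$, gives the two subcases~(i) and (ii) after shifting by $-\tfrac{b}{2}$; one checks the constant term of each root collapses to $\tfrac12(-b\pm\rho_i)$ and the imaginary part to $\mp\tfrac1{\rho_i}(c_1\bi+c_2\bj+c_3\bk)$.

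I expect the main obstacle to be purely computational rather than conceptual: correctly identifying $I_w$ and $K(c)$ and matching the $\pm$ sign conventions so that the $\rho_i$ in the theorem align with the $\sqrt{2(w_0\pm\sqrt{I_w})}$ denominators in Proposition~\ref{prop2.1}. Because $b$ is real and thus central, no genuine noncommutativity difficulty arises here, and the potential trouble from zero divisors is entirely absorbed into the structure of $\sqrt[s]{w}$ already established in Proposition~\ref{prop2.1}; so the heart of the argument is just the completion of the square, and the rest is careful substitution.
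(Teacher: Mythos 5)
Your proposal is correct and is essentially identical to the paper's own proof: complete the square using the fact that $b\in\br$ is central, reduce to $(x+\tfrac{b}{2})^2=\tfrac{b^2-4c}{4}$, and then expand $\sqrt[s]{\tfrac{b^2-4c}{4}}$ via Proposition \ref{prop2.1}. The only difference is that you spell out the computation of $I_w=\tfrac{(b^2-4c_0)^2-16K(c)}{16}$ and the sign bookkeeping explicitly, which the paper leaves implicit in the phrase ``expanding the part $\sqrt[s]{\frac{b^2-4c}{4}}$ by Proposition \ref{prop2.1}.''
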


\begin{proof}
Since $b\in \br$ we can rewrite $x^2+bx+c=0$ as
\begin{equation}\label{sqform}(x+\frac{b}{2})^2=\frac{b^2-4c}{4}.\end{equation}
If   $x^{2}+bx+c=0$ with $b\in \br$ is solvable then by  Definition \ref{def2.1} and   Eq.(\ref{sqform}),
we obtain $$x=\frac{-b}{2}+\sqrt[s]{\frac{b^2-4c}{4}}.$$
Expanding the part $\sqrt[s]{\frac{b^2-4c}{4}}$ by Proposition \ref{prop2.1}   concludes  the proof.
\end{proof}

\begin{exam}\label{exam2.1}
Consider the quadratic equation $x^2 +3+\bi+\bj+\bk=0$.  Since  $\sqrt[s]{-c}=\sqrt[s]{-3-\bi-\bj-\bk}=\emptyset$,
this quadratic equation is unsolvable.
\end{exam}
Some examples of Theorem \ref{thm2.1} are given in Table 1.
\begin{table}[!htbp]
	\centering
	\caption{Some examples in Theorem \ref{thm2.1}}
	\begin{tabular}{|c|c|l|}
		\hline
		& $(b, c)$ & The solution(s) of $x^2+bx+c=0$\\
		\hline
		(1)I&$(0,c_0)$,$(c_0>0)$ & $x=x_1\bi+x_2\bj+x_3\bk$ with  $-x_1^2+x_2^2+x_3^2=-c_0$ \\
 \hline
 \multirow{2}*{(1)II}&\multirow{2}*{$(0,c_0),\,(c_0<0)$} &$x=\pm \sqrt{-c_0}$\\ &&$x=x_1\bi+x_2\bj+x_3\bk$ with
 $-x_1^2+x_2^2+x_3^2=-c_0$
 \\		\hline
		\multirow{2}*{(2)}&\multirow{2}*{$(2,-3)$} &$x=1,x=-3$\\
		&&$x=-1+x_1\bi+x_2\bj+x_3\bk$ with $-x_1^2+x_2^2+x_3^2=4$ \\
		\hline
		\multirow{4}*{(3) (i)}&\multirow{4}*{$(4,\bi+2\bj+3\bk)$}&
  $x=-2+\sqrt{3}-\frac{\sqrt{3}}{6}\bi-\frac{\sqrt{3}}{3}\bj-\frac{\sqrt{3}}{2}\bk$\\
		&&$x=-2-\sqrt{3}+\frac{\sqrt{3}}{6}\bi+\frac{\sqrt{3}}{3}\bj+\frac{\sqrt{3}}{2}\bk$\\
		&&$x=-1-\frac{1}{2}\bi-\bj-\frac{3}{2}\bk$\\
		&&$x=-3+\frac{1}{2}\bi+\bj+\frac{3}{2}\bk$\\
		\hline
	    (3) (ii)&$(2,\frac{7}{2}\bi+\bj)$ & $x=\frac{1}{2}-\frac{7}{6}\bi-\frac{1}{3}\bj$ and
  $x=-\frac{5}{2}+\frac{7}{6}\bi+\frac{1}{3}\bj$\\
		\hline
	\end{tabular}
\end{table}

\section{Equation II}

\subsection{The solutions of form $2x_0+b\in Z(\bh_s)$}
In this subsection, we consider Equation II for SZ. We will find the necessary and sufficient conditions of
 Equation II having a solution $x\in 2x_0+b\in Z(\bh_s)$.

Suppose that $x^2+bx+c=0$ has a solution of $2x_0+b\in Z(\bh_s)$.
Since $a=1$ and  $b=b_1\bi+b_2\bj+b_3\bk\neq 0$, we have $$I_a=1,P_{ab}=0, P_{ac}=c_0, -I_b=M_b.$$
 By (\ref{ncasezero}) and (\ref{nzacc}), we have \begin{equation}x_0^2=\frac{M_b}{4}.\end{equation}
 and
 \begin{equation}\label{se3nzacc}I_x^2-2c_0I_x+I_c=0.\end{equation}
 The existence of $I_x$ leads to the discriminate  \begin{equation}\label{se3c}4c_0^2-4I_c=4M_c\geq 0.\end{equation}
  So we at first need that $$M_b,M_c\ge 0.$$

 Let $x=x_0+x_1\bi+x_2\bj+x_3\bk$  with $x_0^2=\frac{M_b}{4}$. For Equation II,
 the real system (\ref{rsym2}) can be reformulated as
\begin{eqnarray}
  -x_1^2+x_2^2+x_3^2-b_1x_1+b_2x_2+b_3x_3+x_0^2+c_0&=&0,\label{1geq1}\\
             2x_0x_1+b_3x_2-b_2x_3&=&-b_1x_0-c_1\label{1geq2},\\
             b_3x_1+2x_0x_2-b_1x_3&=&-b_2x_0-c_2\label{1geq3},\\
             -b_2x_1+b_1x_2+2x_0x_3&=&-b_3x_0-c_3\label{1geq4}.
 \end{eqnarray}
 Let  $y=(x_1,x_2,x_3)^T$.   Eqs.(\ref{1geq2})-(\ref{1geq4}) can be expressed as
  \begin{equation}\label{lineareq} Ay=u,\end{equation}
  where  \begin{equation}\label{matrixA1}A=\left(
  \begin{array}{ccc}
        2x_0 & b_3 & -b_2 \\
    b_3 & 2x_0 & -b_1 \\
   -b_2 & b_1 & 2x_0 \\
  \end{array}
\right),u=\left(
                 \begin{array}{c}
                   -b_1x_0-c_1 \\
                   -b_2x_0-c_2\\
                   -b_3x_0-c_3 \\
                 \end{array}
               \right).\end{equation}
      Note that $$\det(A)=8x_0^3+2x_0(b_1^2-b_2^2-b_3^2)=2x_0(4x_0^2-M_b)=0.$$ Let $$M=\left(
  \begin{array}{cc}
    2x_0 & -b_1 \\
    b_1 & 2x_0 \\
  \end{array}
\right).$$ Since $b=b_1\bi+b_2\bj+b_3\bk\neq 0$ and $M_b=-b_1^2+b_2^2+b_3^2\ge 0$,  the subdeterminant
  $$\det(M)=4x_0^2+b_1^2=b_2^2+b_3^2>0.$$  This means $rank(A)=2$.  We reformulate (\ref{1geq3}) and (\ref{1geq4}) as
 \begin{equation}\label{neq39}\left\{
\begin{aligned}
 	2x_0x_2-b_1x_3&=&-b_3x_1-b_2x_0-c_2,\\
 	b_1x_2+2x_0x_3&=&b_2x_1-b_3x_0-c_3.
 \end{aligned}
\right.\end{equation}
 Because $$M^{-1}=\frac{1}{b_2^2+b_3^2}\left(
\begin{array}{cc}
2x_0 & b_1 \\
-b_1 & 2x_0 \\
\end{array}
\right),$$
from Eqs. (\ref{neq39}) we have
\begin{equation}\label{eqx23}x_2=a_{21}+a_{22}x_1,\ x_3=a_{31}+a_{32}x_1,\end{equation}
where
\begin{equation}\label{eqa212}a_{21}=\frac{-2x_0^2b_2-(b_1b_3+2c_2)x_0-b_1c_3}{b_2^2+b_3^2},
\ a_{22}=\frac{b_1b_2-2b_3x_0}{b_2^2+b_3^2},\end{equation}
\begin{equation}\label{eqa312}a_{31}=\frac{-2x_0^2b_3+(b_1b_2-2c_3)x_0+b_1c_2}{b_2^2+b_3^2},
\ a_{32}=\frac{b_1b_3+2b_2x_0}{b_2^2+b_3^2}.\end{equation}
Let
\begin{equation}\label{se31C1}F_1=\frac{(b_2c_3-b_3c_2)x_0-b_1(b_2c_2+b_3c_3)+c_1(b_2^2+b_3^2)}{b_2^2+b_3^2}.\end{equation}
Substituting  $x_2,x_3$ of  Eqs.(\ref{eqx23}) in  Eq.(\ref{1geq2}), we must  have
    \begin{equation}\label{cBB}F_1=0.\end{equation}
In fact, since $\det(A)=0$, we must have that the coefficient of $x_1$ is zero.  And the condition $F_1=0$
 is just a restatement of the condition $rank(A)=rank(A,u)=2$.

Let \begin{equation}\label{se31C2}F_2=c_0(b_2^2+b_3^2)+b_1(b_3c_2-b_2c_3)+c_2^2+c_3^2.\end{equation}
Substituting $x_2,x_3$ of  Eqs.(\ref{eqx23}) in Eq.(\ref{1geq1}), we have
\begin{equation}\label{eqx1}2(b_3c_2-b_2c_3)x_1+F_2=0. \end{equation}
If $b_3c_2-b_2c_3=0$ we should have $F_2=0$ and in this case $x_1$ is arbitrary.
 If $b_3c_2-b_2c_3\neq 0$ then $$x_1=-\frac{F_2}{2(b_3c_2-b_2c_3)}.$$

Summarizing our reasoning process, we figure out the following  conditions.

  \begin{defi}\label{defcondA}
  For the coefficients $b,c$ in Equation II with   with $M_b,M_c\ge 0$,
    letting $r\in \br$ such that
    \begin{equation}\label{forr1}r^2=\frac{M_b}{4},\end{equation}
we say  $(b,c)$   satisfies  {\bf Condition A} if the following two conditions hold:
\begin{itemize}
		\item [(1)]  There exists an $r$ of (\ref{forr1}) satisfying
$$(b_2c_3-b_3c_2)r-b_1(b_2c_2+b_3c_3)+c_1(b_2^2+b_3^2)=0;$$
\item [(2)] If $b_3c_2=b_2c_3$ then $c_0(b_2^2+b_3^2)+c_2^2+c_3^2=0$.
\end{itemize}
 \end{defi}

Note that if $b_3c_2-b_2c_3\neq 0$ and the coefficients $b,c$ in Equation II satisfy Condition A,
then we have  $$r= \frac{b_1(b_2c_2+b_3c_3)-c_1(b_2^2+b_3^2)}{b_2c_3-b_3c_2}.$$
Summarizing the previous results, we obtain the following theorem.
\begin{thm}\label{thm3.1} Equation II has a  solution $x\in SZ$  if and only if Condition A holds.
 If Condition A is  hold by $r$, then we have the following cases.
\begin{itemize}
	\item [(1)]
 If $b_3c_2-b_2c_3\neq 0$  then Equation II has solutions
 $$x=r+x_1\bi+x_2\bj+x_3\bk,$$
where $$r= \frac{b_1(b_2c_2+b_3c_3)-c_1(b_2^2+b_3^2)}{b_2c_3-b_3c_2},$$
$$x_1= \frac{c_0(b_2^2+b_3^2)+b_1(b_3c_2-b_2c_3)+c_2^2+c_3^2}{2(b_2c_3-b_3c_2)}$$
and
 $$x_2=a_{21}+a_{22}x_1,x_3=a_{31}+a_{32}x_1,$$
where
 \begin{equation}\label{eqa212thm}a_{21}=\frac{-M_bb_2-(b_1b_3+2c_2)r-b_1c_3}{b_2^2+b_3^2},
 \ a_{22}=\frac{b_1b_2-2b_3r}{b_2^2+b_3^2};\end{equation}
\begin{equation}\label{eqa312thm}a_{31}=\frac{-M_bb_3+(b_1b_2-2c_3)r+b_1c_2}{b_2^2+b_3^2},
\ a_{32}=\frac{b_1b_3+2b_2r}{b_2^2+b_3^2}.\end{equation}

\item [(2)]  If $b_3c_2-b_2c_3=0$, then Equation II has solutions
$$x=r+x_1\bi+(a_{21}+a_{22}x_1)\bj+(a_{31}+a_{32}x_1)\bk,\forall x_1\in \br,$$
where  $a_{21},a_{22},a_{31},a_{32}$ are given by (\ref{eqa212thm}) and (\ref{eqa312thm})
 with $r= \frac{\sqrt{M_b}}{2}$ or $r=-\frac{\sqrt{M_b}}{2}$.
\end{itemize}
\end{thm}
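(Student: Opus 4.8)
The plan is to prove both implications by specializing the real nonlinear system (\ref{rsym2}) to $a=1$ and then exploiting the way the hypothesis $2x_0+b\in Z(\bh_s)$ forces a drastic simplification. First I would record what the zero-divisor hypothesis buys us. Since $a=1$ gives $I_a=1$, $P_{ab}=0$ and $P_{ac}=c_0$, identity (\ref{ncasezero}) collapses to $4x_0^2+I_b=0$, i.e. $x_0^2=M_b/4$, while (\ref{nzacc}) becomes the scalar quadratic (\ref{se3nzacc}) in $I_x$, whose discriminant is exactly $4M_c$. Thus a solution in $SZ$ can exist only when $M_b,M_c\ge 0$, and its scalar part is pinned down to $x_0=\pm r$ with $r=\sqrt{M_b}/2$.

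With $x_0$ fixed, I would treat the four scalar equations (\ref{1geq1})--(\ref{1geq4}) as three linear equations for $(x_1,x_2,x_3)$ together with one quadratic constraint. The linear part $Ay=u$ is rank-deficient, since $\det A=2x_0(4x_0^2-M_b)=0$, but the $2\times2$ minor $M$ has $\det M=b_2^2+b_3^2$, which is strictly positive because $b\neq0$ together with $M_b\ge0$ rules out $b_2=b_3=0$. Hence $\mathrm{rank}(A)=2$, and inverting $M$ solves (\ref{1geq3})--(\ref{1geq4}) for $x_2,x_3$ as the affine functions of $x_1$ in (\ref{eqx23}). Substituting these into the remaining linear equation (\ref{1geq2}) yields the compatibility condition $F_1=0$; evaluated at $x_0=r$ this is precisely item (1) of Condition A, and it encodes the requirement $\mathrm{rank}(A)=\mathrm{rank}(A,u)=2$.

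The decisive step is to feed the affine expressions for $x_2,x_3$ into the quadratic equation (\ref{1geq1}). A priori this produces a genuine quadratic in $x_1$, but the coefficient of $x_1^2$ equals $-1+a_{22}^2+a_{32}^2$, and a direct computation shows $a_{22}^2+a_{32}^2=\frac{(b_2^2+b_3^2)(b_1^2+4x_0^2)}{(b_2^2+b_3^2)^2}=1$, where the last equality uses exactly $4x_0^2=M_b$. This cancellation---the heart of the argument, and the step I expect to be the main obstacle to verify cleanly---reduces (\ref{1geq1}) to the linear equation (\ref{eqx1}), namely $2(b_3c_2-b_2c_3)x_1+F_2=0$. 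If $b_3c_2-b_2c_3\neq0$ this determines $x_1$ uniquely, giving the isolated solutions of part (1); if $b_3c_2-b_2c_3=0$ then $F_2$ must vanish, which is item (2) of Condition A, and $x_1$ remains free, producing the one-parameter family of part (2).

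Finally I would run the argument backwards to obtain sufficiency. Assuming Condition A, I choose $r$ as in (\ref{forr1}), set $x_0=r$, define $x_1$ (and hence $x_2,x_3$ through (\ref{eqx23})) by the displayed formulas, and verify that $(x_0,x_1,x_2,x_3)$ satisfies all of (\ref{1geq1})--(\ref{1geq4}) and that $2x_0+b\in Z(\bh_s)$, so indeed $x\in SZ$. The explicit coefficients $a_{21},a_{22},a_{31},a_{32}$ then specialize to (\ref{eqa212thm}) and (\ref{eqa312thm}) upon writing $M_b=4r^2$, which completes both the characterization and the solution formulas.
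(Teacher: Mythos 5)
Your proposal is correct and follows essentially the same route as the paper: reduce to the real system (\ref{1geq1})--(\ref{1geq4}) with $x_0^2=M_b/4$, use $\det(A)=0$ together with the invertible minor $M$ to express $x_2,x_3$ affinely in $x_1$, read off $F_1=0$ as the rank compatibility condition, and observe that the quadratic equation degenerates to the linear relation $2(b_3c_2-b_2c_3)x_1+F_2=0$. Your explicit verification that the $x_1^2$-coefficient vanishes via $a_{22}^2+a_{32}^2=\frac{b_1^2+4x_0^2}{b_2^2+b_3^2}=1$ is a detail the paper leaves implicit, but it is the same argument.
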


Some examples of Theorem \ref{thm3.1} are given in Table 2.
\begin{table}[!htbp]
	\centering
	\caption{Some examples in Theorem \ref{thm3.1}}
	\begin{tabular}{|c|p{5.3cm}|c|p{7.6cm}|}
		\hline
		& $(b, c)$ &$r$& The solution(s) of $x^2+bx+c=0$\\
		\hline
		(1) &$(\bi+\bj,2+\bk)$ &$0$ & $x=\bi$ \\
		\hline
		(2)I &$(\bi+\bj,-1+\bi+\bj)$ &$0$ &  $x=x_1\bi+x_1\bj+\bk, \forall x_1\in \br$ \\
		\hline
		\multirow{2}*{(2)II}&\multirow{2}*{$(2\bi+2\bj+4\bk,-4+4\bi+4\bj+8\bk)$}&-2&$x=-2+x_1\bi+x_1\bj, \forall x_1\in
\br$\\
		\cline{3-4}
		&&2&$x=2+x_1\bi-(\frac{16}{5}+\frac{3x_1}{5})\bj+(\frac{-12}{5}+\frac{4x_1}{5})\bk, \forall x_1\in \br$ \\
		\hline
	\end{tabular}
\end{table}

\subsection{The solutions of form  $2x_0+b\in \bh_s-Z(\bh_s)$}
In this subsection we will consider Equation II for $SI$.

 Since $a=1$  and  $b=b_1\bi+b_2\bj+b_3\bk\neq 0$, we have $$I_a=1, P_{ab}=0.$$ Hence the real nonlinear system (\ref{rsym1})
 is simplified to
  \begin{eqnarray}
     \label{1enf1} N^2-(B+T^2)N+E=0,&\\
     \label{1enf2} T^3+(B-2N)T+D=0,
      \end{eqnarray}
      where $B=2P_{1,c}+I_b, E=I_{c},D=2P_{bc}$.

If $D\neq 0$ then by Eq.(\ref{1enf2}) we have  $T\neq 0$.
It follows from (\ref{1enf2}) that \begin{equation}\label{eqn3}N=\frac{T^3+BT+D}{2T}.\end{equation}
Substituting  the above in (\ref{1enf1}),  we get
$$\frac{(T^3+BT+D)^2}{4T^2}-\frac{(T^3+BT+D)[2T(B+T^2)]}{4T^2}+\frac{4T^2E}{4T^2}=0.$$
Hence we have
$$T^2(T^2+B)^2-4ET^2-D^2=0.$$
Let $T^2=z$. Then
\begin{equation}\label{cubeeq1}z^3+2Bz^2+(B^2-4E)z-D^2=0.\end{equation}
In order to find the pairs $(T,N)$ of Eqs.(\ref{1enf1}) and (\ref{1enf2}),
 we need to know all the positive solutions of Eq.(\ref{cubeeq1}) when $D\neq 0$ .

The following lemma provides  our required information  of positive solutions of Eq.(\ref{cubeeq1}).
\begin{lem}\label{lem1cub}
Let $B,E,D\in \br$ such that $D\neq 0$, $$F_1= \frac{-2B^3+2(B^2+12E)^{\frac{3}{2}}}{27}+\frac{8EB}{3}-D^2$$
 and
 $$F_2= \frac{-2B^3-2(B^2+12E)^{\frac{3}{2}}}{27}+\frac{8EB}{3}-D^2.$$
Then the cubic equation
\begin{equation}\label{cubeq}
z^3+2Bz^2+(B^2-4E)z-D^2=0 \end{equation}
has solutions in the interval $(0,\infty)$ as follows.

Case 1. If one of the following conditions holds, then  Eq.(\ref{cubeq}) has exactly one positive solution $z$.
\begin{itemize}
 \item [(i)]  $B^2+12E\le 0$;
  \item [(ii)] $B^2+12E>0,B\ge 0$;
  \item [(iii)] $B<0,B^2-4E<0$;
  \item [(iv)] $B^2+12E>0,B<0,B^2-4E>0,F_1<0$;
  \item [(v)] $B^2+12E>0,B<0,B^2-4E>0,F_2>0$.
  \end{itemize}

 Case 2. If one of the following conditions holds, then  Eq.(\ref{cubeq}) has exactly two positive solutions $z_1$ and $z_2$.
  \begin{itemize}
   \item [(vi)] $B^2+12E>0,B<0,B^2-4E>0,F_1=0$;
  \item [(vii)] $B^2+12E>0,B<0,B^2-4E>0,F_2=0$.
  \end{itemize}

   Case 3. If $B^2+12E>0,B<0,B^2-4E>0,F_1>0>F_2$, then Eq.(\ref{cubeq}) has exactly three positive solutions $z_i,i=1,2,3$.
\end{lem}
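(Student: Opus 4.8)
The plan is to treat the left-hand side as a real cubic and locate its positive roots by elementary calculus. Write
$$p(z)=z^3+2Bz^2+(B^2-4E)z-D^2.$$
Since $D\neq 0$ we have $p(0)=-D^2<0$, while $p(z)\to+\infty$ as $z\to+\infty$; hence $p$ always has at least one positive root, and the whole problem is to count how many. The derivative is $p'(z)=3z^2+4Bz+(B^2-4E)$, whose discriminant equals $4(B^2+12E)$. Thus $p'$ has no two distinct real zeros precisely when $B^2+12E\le 0$, and in that regime $p'\ge 0$, so $p$ is nondecreasing; combined with $p(0)<0$ this gives exactly one positive root, which is case (i).

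When $B^2+12E>0$, set $s=\sqrt{B^2+12E}$ and let $z_-=\frac{-2B-s}{3}<z_+=\frac{-2B+s}{3}$ be the two critical points. Since $p$ has positive leading coefficient, $z_-$ is a local maximum and $z_+$ a local minimum, and by Vieta $z_-+z_+=-\frac{4B}{3}$, $z_-z_+=\frac{B^2-4E}{3}$. If $B\ge 0$ then $z_-<0$, so on $(0,\infty)$ the function either increases throughout or decreases to a minimum $p(z_+)<p(0)<0$ before increasing; in both cases there is exactly one positive root, covering case (ii). If $B<0$ and $B^2-4E<0$ then $z_-z_+<0$, hence $z_-<0<z_+$, and the same monotonicity picture on $(0,z_+)$ and $(z_+,\infty)$ again gives exactly one positive root, which is case (iii).

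The substantive case is $B<0$ and $B^2-4E>0$: now $z_-+z_+>0$ and $z_-z_+>0$, so both critical points are positive, $0<z_-<z_+$. With $p(0)<0$, the graph rises to the local maximum $p(z_-)$, falls to the local minimum $p(z_+)$, then rises to $+\infty$, so the number of positive roots is governed entirely by the signs of $p(z_-)$ and $p(z_+)$. The key computation is to evaluate these extremal values: dividing $p$ by $p'$ gives $p(z)=\bigl(\tfrac{1}{3}z+\tfrac{2B}{9}\bigr)p'(z)-\tfrac{2s^2}{9}z-D^2-\tfrac{2B(B^2-4E)}{9}$, so on the zero set of $p'$ one finds, after substituting $z=z_\pm$ and $s^2=B^2+12E$, that $p(z_-)=F_1$ and $p(z_+)=F_2$. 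This identification is the main obstacle, being the one genuinely computational step. Granting it, and noting $F_1=p(z_-)\ge p(z_+)=F_2$ (with equality impossible since $s>0$), the sign analysis is immediate: $F_1<0$ forces $p<0$ on $(0,z_+]$ with one crossing beyond, giving one root (case iv); $F_2>0$ forces one crossing in $(0,z_-)$ and $p>0$ thereafter, again one root (case v); $F_1=0$ (resp. $F_2=0$) produces a double root at $z_-$ (resp. $z_+$) together with one simple root, hence two positive solutions (cases vi and vii); and $F_1>0>F_2$ produces one crossing in each of $(0,z_-)$, $(z_-,z_+)$, $(z_+,\infty)$, hence three positive solutions (Case 3). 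Since $F_1\ge F_2$, these five alternatives exhaust all sign patterns, completing the count.
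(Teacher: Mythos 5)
Your proof is correct and takes essentially the same route as the paper's: both analyze $f(z)=z^3+2Bz^2+(B^2-4E)z-D^2$ on $(0,\infty)$ via $f(0)=-D^2<0$, the critical points $z_\pm=\frac{-2B\pm\sqrt{B^2+12E}}{3}$ of $f'$, and the signs of the local extremal values, which are identified with $F_1$ and $F_2$. The only difference is that you actually carry out the polynomial division establishing $f(z_-)=F_1$ and $f(z_+)=F_2$ (and note $F_1>F_2$ so the five sign patterns are exhaustive), a computation the paper merely asserts "can be verified."
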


\begin{proof}
Let $f(z)=z^3+2Bz^2+(B^2-4E)z-D^2.$
We focus our attention on  the interval $(0,\infty)$.
 It is obvious that $$f(0)=-D^2<0,\ \lim_{z\to +\infty} f(z)=+\infty.$$
  Note that \begin{equation}f'(z)=3z^2+4Bz+B^2-4E=3\Big((z+\frac{2B}{3})^2-\frac{B^2+12E}{9}\Big).\end{equation}
The discriminant   of  $f'(z)=0$ is $$\Delta_{f'}=4(B^2+12E).$$  If $\Delta_{f'}\le 0$, then $f'(z)\geq 0$.
Hence Eq.(\ref{cubeq}) has exactly one positive solution $z$.
This proves Case 1 (i).

We now consider the case $\Delta_{f'}=4(B^2+12E)>0$.
Note that the solutions of $f'(z)=0$ are $$z_1=\frac{-2B-\sqrt{B^2+12E}}{3},z_2=\frac{-2B+\sqrt{B^2+12E}}{3}$$
and it can be verified that  $F_1=f(z_1),F_2=f(z_2)$.

If $B\ge 0$ then $z=-\frac{2B}{3}\leq 0$.
This means that $f'(z)$ is increasing in  $(-\frac{2B}{3},\infty)$.
If $z_2\le 0$ then $f'(z)>0$ in  $(0,\infty)$ and therefore $f(z)$ is increasing in $(0,\infty)$.
If $z_2>0$ then $f'(z)<0$ in $(0, z_2)$ and  $f'(z)>0$ in $(z_2,\infty)$.
Therefore  $f(z)$ is decreasing in $(0, z_2)$  then increasing in the interval $(z_2,\infty)$.
Note that $f(0)=-D^2<0$.  In both cases   Eq.(\ref{cubeq}) has exactly one positive solution $z$ in $(0,\infty)$.
This proves Case 1 (ii).

If  $B<0$  then $z=-\frac{2B}{3}>0$.  If $f'(0)=B^2-4E<0$ then  $z_1<0$ and  $f(z)$ decreases at first  in the interval
$(0,z_2)$
 and increases in the interval $(z_2,+\infty)$. Hence $f(z)$  has exactly one positive solution $z$. This proves Case 1
 (iii).

We now consider the case   $B<0$ and $f'(0)=B^2-4E>0$. In this case $f'(z)$ is positive in $(0,z_1)\cup (z_2,\infty)$ and
negative
 in $(z_1,z_2)$.

 If $f(z_1)<0$ then $f(z)$ is increasing in $(0,z_1)$, decreasing in $(z_1,z_2)$ and then increasing in $(z_2,+\infty)$.
 Hence Eq.(\ref{cubeq}) has exactly one positive solution in  $(z_2,+\infty)$. This proves Case 1 (iv).

 If $f(z_1)=0$,  then it is obvious that Eq.(\ref{cubeq}) has exactly two  positive solutions, the other one is in
 $(z_2,+\infty)$.
  This proves Case 2 (vi). 

  If $f(z_1)>f(z_2)>0$  then Eq.(\ref{cubeq}) has exactly one  positive solution in $[0,z_1]$. This proves Case 1 (v).

 If $f(z_1)>0=f(z_2)$  then Eq.(\ref{cubeq}) has exactly  two  positive solutions, the other one is  in $[0,z_1]$.
 This proves Case 2 (vii).
 If $f(z_1)>0>f(z_2)$  then Eq.(\ref{cubeq}) has exactly  three  positive solutions which lie  in
  $$(0,z_1)\cup (z_1,z_2)\cup (z_2,+\infty).$$ This proves Case 3.
 \end{proof}

We are ready to find the pairs $(T,N)$ of Eqs.(\ref{1enf1}) and (\ref{1enf2}).
\begin{lem}\label{lem2sys}
Let $B,E,D\in \br$.
Then the real system
\begin{eqnarray}
N^2-(B+T^2)N+E=0\label{eqn1},&\\
T^3+(B-2N)T+D=0\label{eqn2}
 \end{eqnarray}
has solutions $(T,N)\in \br^2$ as follows.
\begin{itemize}
  \item [(1)] $T=0,N=\frac{B\pm \sqrt{B^2-4E}}{2}$ provided $D=0,B^2-4E\ge 0$;
  \item [(2)] $T=\pm \sqrt{-2\sqrt{E}-B},N=-\sqrt{E}$; and $T=\pm \sqrt{2\sqrt{E}-B},N=\sqrt{E}$
  provided $D=0,E\ge 0,-2\sqrt{E}-B\ge 0$;
  \item [(3)] $T=\pm \sqrt{2\sqrt{E}-B},N=\sqrt{E}$ provided  $D=0,E\ge 0,2\sqrt{E}-B\ge 0>-2\sqrt{E}-B $;
    \item [(4)] $T=\pm \sqrt{z},N=\frac{T^3+BT+D}{2T}$ provided $D\neq 0$,
      Case 1 of Lemma \ref{lem1cub} holds and $z$ is the unique  positive root of real polynomial
      $z^3+2Bz^2+(B^2-4E)z-D^2$;
\item [(5)] $T=\pm \sqrt{z_i},N=\frac{T^3+BT+D}{2T},i=1,2$ provided $D\neq 0$,
 Case 2 of Lemma \ref{lem1cub} hold and $z_1,z_2$ are two  positive roots of real polynomial $z^3+2Bz^2+(B^2-4E)z-D^2$;
\item [(6)] $T=\pm \sqrt{z_i},N=\frac{T^3+BT+D}{2T},i=1,2,3$
  provided $D\neq 0$,  Case 3 of Lemma \ref{lem1cub} hold and $z_1,z_2,z_3$
  are the three positive roots of real polynomial $z^3+2Bz^2+(B^2-4E)z-D^2$.
\end{itemize}
\end{lem}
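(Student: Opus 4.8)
The plan is to split the analysis according to whether $D=0$ or $D\neq 0$, since the constant term $D$ is exactly what decides whether $T=0$ is admissible in the cubic equation (\ref{eqn2}).

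First suppose $D=0$. Then (\ref{eqn2}) factors as $T(T^2+B-2N)=0$, so either $T=0$ or $T^2=2N-B$. In the branch $T=0$, equation (\ref{eqn1}) reduces to $N^2-BN+E=0$, whose real roots are $N=\frac{B\pm\sqrt{B^2-4E}}{2}$ and which exist precisely when $B^2-4E\ge 0$; this gives item (1). In the branch $T^2=2N-B$ I substitute $B+T^2=2N$ into (\ref{eqn1}), and it collapses to $-N^2+E=0$, so $N=\pm\sqrt{E}$ (requiring $E\ge 0$). For $N=\sqrt E$ one gets $T^2=2\sqrt E-B$, and for $N=-\sqrt E$ one gets $T^2=-2\sqrt E-B$; reading off which of these right-hand sides is nonnegative separates items (2) and (3). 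The delicate point here is that $-2\sqrt E-B\ge 0$ automatically forces $2\sqrt E-B\ge 0$ (since $2\sqrt E\ge -2\sqrt E$), so the $N=-\sqrt E$ solutions never occur in isolation; this is exactly why item (3) retains only the $N=\sqrt E$ pair.

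Next suppose $D\neq 0$. As already observed just before the lemma, $T=0$ would force $D=0$ in (\ref{eqn2}), so $T\neq 0$ and I may solve (\ref{eqn2}) for $N$, obtaining $N=\frac{T^3+BT+D}{2T}$. Substituting this into (\ref{eqn1}) and clearing the denominator reproduces, via the computation carried out just above (\ref{cubeeq1}), the single scalar condition $T^2(T^2+B)^2-4ET^2-D^2=0$; setting $z=T^2$ turns this into the cubic (\ref{cubeeq1}). Each positive root $z$ then yields the two real values $T=\pm\sqrt z$ together with the companion $N$ given by the displayed formula, and both signs of $T$ produce genuine solutions because the reduction used only $T\neq 0$. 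Invoking Lemma \ref{lem1cub} to count the positive roots of (\ref{cubeeq1}) distributes these pairs into items (4), (5) and (6) according to Cases 1, 2 and 3 of that lemma.

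The two substitutions are routine; the main obstacle is the bookkeeping in the $D=0$ branch, namely confirming that the positivity side-conditions $E\ge 0$, $2\sqrt E-B\ge 0$ and $-2\sqrt E-B\ge 0$ partition the solution set correctly and exhaustively, with no double counting at the boundary case $T=0$, $2N=B$. For $D\neq 0$ the combinatorial difficulty has been offloaded to Lemma \ref{lem1cub}, so the only remaining point there is that the passage from $z$ back to $T$ neither loses nor introduces solutions, which holds because $T\mapsto T^2$ is two-to-one on $\br\setminus\{0\}$ and the substitution $N=\frac{T^3+BT+D}{2T}$ is reversible.
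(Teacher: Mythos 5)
Your proposal is correct and follows essentially the same route as the paper: the same split on $D=0$ versus $D\neq 0$, the same factorization $T(T^2+B-2N)=0$ in the first case (with the observation that $-2\sqrt{E}-B\ge 0$ forces $2\sqrt{E}-B\ge 0$, which is exactly how the paper justifies listing both $N=\pm\sqrt{E}$ in item (2)), and the same reduction to the cubic in $z=T^2$ handled by Lemma \ref{lem1cub} when $D\neq 0$. Your added remarks on reversibility of the substitution and the two-to-one nature of $T\mapsto T^2$ are sound and only make explicit what the paper leaves implicit.
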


\begin{proof}
We divide our consideration into two subcases $D=0$ and $D\neq 0$.

We begin with  the subcase $D=0$. In this case Eq.(\ref{eqn2}) becomes
$$T(T^2+B-2N)=0.$$   Hence $T=0$ or $T^2+B-2N=0$.
If $T=0$, then Eq.(\ref{eqn1}) becomes $N^2-BN+E=0$ and therefore $N_{1,2}=\frac{B\pm \sqrt{B^2-4E}}{2}$ provided $B^2-4E\ge
0$.
 This proves (1).

If $T^2+B-2N=0$, then $T^2+B=2N$  and therefore Eq.(\ref{eqn1}) becomes  $N^2=E$. Thus $N_{1,2}=\pm \sqrt{E}$ provided $E\ge
0$.
Hence $T^2=2N-B=\pm 2\sqrt{E}-B$.  Take $N=-\sqrt{E},T^2=-2\sqrt{E}-B$ provided $E\ge 0,-2\sqrt{E}-B\ge 0$.
In this case we also can take $N=\sqrt{E},T^2=2\sqrt{E}-B$ because of  $2\sqrt{E}-B\ge -2\sqrt{E}-B\ge 0$.   This proves (2).

If $E\ge 0,2\sqrt{E}-B\ge 0>-2\sqrt{E}-B$ then we can take $N=\sqrt{E},T^2=2\sqrt{E}-B$. This proves (3).

\vspace{1mm}
For the second case $D\neq 0$, such a system can be solved by Lemma \ref{lem1cub} as claimed. These prove (4),(5) and (6).
\end{proof}

\begin{thm}\label{thm3.2}  For the coefficients $b,c$ in Equation II, we define
 \begin{equation}\label{coeff}B=2P_{1,c}+I_b, E=I_{c},D=2P_{bc}.\end{equation} If Equation II is solvable
  and has solution  $x=x_0+x_1\bi+x_2\bj+x_3\bk$ with $x_0^2\neq \frac{M_b}{4}$, then
\begin{equation}x=(T+b)^{-1}(N-c),\end{equation}

where  $(T,N)$ is chosen as follows.

\begin{itemize}
  \item [(1)] $T=0,N=\frac{B\pm \sqrt{B^2-4E}}{2}$ provided $D=0,B^2-4E\ge 0$;
  \item [(2)] $T=\pm \sqrt{-2\sqrt{E}-B},N=-\sqrt{E}$, and $T=\pm \sqrt{2\sqrt{E}-B},N=\sqrt{E}$
   provided $D=0,E\ge 0,-2\sqrt{E}-B\ge 0$;
  \item [(3)] $T=\pm \sqrt{2\sqrt{E}-B},N=\sqrt{E}$ provided  $D=0,E>0,2\sqrt{E}-B\ge 0 >-2\sqrt{E}-B $;

  \item [(4)] $T=\pm \sqrt{z},N=\frac{T^3+BT+D}{2T}$ provided $D\neq 0$,  Case 1 of Lemma \ref{lem1cub} holds and $z$
  is the unique  positive root of real polynomial $z^3+2Bz^2+(B^2-4E)z-D^2$;
\item [(5)] $T=\pm \sqrt{z_i},N=\frac{T^3+BT+D}{2T},i=1,2$ provided $D\neq 0$,
 Case 2 of Lemma \ref{lem1cub} holds and $z_1,z_2$ are two  positive roots of real polynomial $z^3+2Bz^2+(B^2-4E)z-D^2$;
\item [(6)] $T=\pm \sqrt{z_i},N=\frac{T^3+BT+D}{2T},i=1,2,3$  provided $D\neq 0$,
 Case 3 of Lemma \ref{lem1cub} holds and $z_1,z_2,z_3$ are the three positive roots of real polynomial
 $z^3+2Bz^2+(B^2-4E)z-D^2$.
\end{itemize}
\end{thm}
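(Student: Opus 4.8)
The plan is to recognize that this theorem is essentially a repackaging of the reduction already carried out just before Lemma~\ref{lem1cub}, combined with the solution enumeration in Lemma~\ref{lem2sys}; the work is therefore mostly translation rather than fresh computation. First I would record that for Equation II we have $a=1$, hence $I_a=1$, $P_{ab}=\langle 1,b\rangle=b_0=0$, $P_{ac}=c_0$, and $P_{bc}=\langle b,c\rangle$. Substituting these into the general real system (\ref{rsym1}) collapses it exactly to the pair (\ref{1enf1})--(\ref{1enf2}) with $B=2c_0+I_b$, $E=I_c$, $D=2P_{bc}$, which are the constants named in (\ref{coeff}).

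Next I would identify the hypothesis $x_0^2\neq\frac{M_b}{4}$ with the invertibility of $2x_0+b$. Since $T=2x_0$ is real and $b$ is purely imaginary, a direct computation gives $I_{T+b}=T^2+I_b=4x_0^2-M_b$, so $T+b\in\bh_s-Z(\bh_s)$ precisely when $x_0^2\neq\frac{M_b}{4}$; this is exactly the $SI$ regime. With invertibility in hand, specializing (\ref{linearx1}) to $a=1$ yields $(T+b)x=N-c$ and therefore $x=(T+b)^{-1}(N-c)$, the claimed formula.

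It then remains to pin down the admissible pairs $(T,N)$. Given an actual solution $x$ in $SI$, the pair $(T,N)=(2x_0,I_x)$ is a real solution of the system (\ref{1enf1})--(\ref{1enf2}), since these two equations are just (\ref{enf1})--(\ref{enf2}) rewritten under $N=x\bar{x}$ and $T=x+\bar{x}$. Applying Lemma~\ref{lem2sys} to this system forces $(T,N)$ to be one of the six enumerated pairs, and I would simply match items (1)--(6) of the lemma to items (1)--(6) of the theorem. A small bookkeeping point worth checking is case (3): the lemma allows $E\ge 0$ while the theorem writes $E>0$, but at $E=0$ the constraint $2\sqrt{E}-B\ge 0>-2\sqrt{E}-B$ reduces to $-B\ge 0>-B$, which is vacuous, so the two statements agree.

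I expect no serious obstacle, because the genuine difficulty---solving the cubic (\ref{cubeeq1}) and counting its positive roots---has already been absorbed into Lemmas~\ref{lem1cub} and~\ref{lem2sys}. The only thing I would be careful about is the logical direction: the theorem asserts a \emph{necessary} form for a solution, not sufficiency, so I need not verify that every listed $(T,N)$ yields a genuine root (some may fail the consistency $I_x=N$ or the invertibility of $T+b$). I only need that the $(T,N)$ attached to a given $SI$-solution lies in the list, which is immediate from the fact that $(2x_0,I_x)$ solves (\ref{1enf1})--(\ref{1enf2}).
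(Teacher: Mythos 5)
Your proposal is correct and follows essentially the same route as the paper: the paper's proof simply invokes Lemma \ref{lem2sys} to enumerate the pairs $(T,N)$ and then writes $x=(T+b)^{-1}(N-c)$ for each pair with $T+b$ invertible, which is exactly your argument (you merely spell out the specializations $I_a=1$, $P_{ab}=0$ and the identification $I_{T+b}=T^2+I_b=4x_0^2-M_b$ that the paper leaves implicit). Your remarks on the necessity-only direction and on the vacuity of case (3) at $E=0$ are sound additional bookkeeping, not a departure from the paper's method.
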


\begin{proof}
By  Lemma \ref{lem2sys},  we get  the pairs $(T,N)$ of Eqs. (\ref{eqn1}) and (\ref{eqn2}).
   For each pair $(T,N)$, if  $T+b$  is invertible then we get a solution  $x=(T+b)^{-1}(N-c)$.
\end{proof}
Some examples of Theorem  \ref{thm3.2} are given in Tables 4 and 5.
  In Table 5,  "C1(i)L3.1"  is an abbreviation of Case 1 (i) of Lemma \ref{lem1cub}.

\begin{table}[!htbp]
	\centering
	\caption{Some examples in Theorem \ref{thm3.2} with $D=0$}
	\begin{tabular}{|c|c|c|p{5.8cm}|}
		\hline
		& $(b, c)$ & $(T, N)$ & The solution(s) of $x^2+bx+c=0$\\
		\hline
		\multirow{2}*{(1)} &\multirow{2}*{$(2\bj,2\bi+3\bk)$} &$(0,1)$ & $x=\frac{3}{2}\bi+\frac{1}{2}\bj+\bk$ \\
		\cline{3-4}
		&&$(0,-5)$ & $x=\frac{3}{2}\bi-\frac{5}{2}\bj+\bk$\\
		\hline
		\multirow{3}*{(2)I} & \multirow{3}*{$(\bi+\bj,-1+\bi+\bj)$} &$(0,0),(0,-1),(0,-2)$ & no solution \\
		\cline{3-4}
		&&$(-2,1)$ & $x=-1$ \\
		\cline{3-4}
		&& $(2,1)$ & $x=1-\bi-\bj$ \\
		\hline
		\multirow{4}*{(2)II} & \multirow{4}*{$(2\bj+\bk,2\bi)$} &$(1,-2)$ &
$x=\frac{1}{2}+\frac{1}{2}\bi-\frac{3}{2}\bj+\frac{1}{2}\bk$ \\
		\cline{3-4}
		&&$(-1,-2)$ & $x=-\frac{1}{2}-\frac{1}{2}\bi-\frac{3}{2}\bj+\frac{1}{2}\bk$ \\
		\cline{3-4}
		&& $(3,2)$ & $x=\frac{3}{2}-\frac{3}{2}\bi-\frac{1}{2}\bj-\frac{3}{2}\bk$ \\
		\cline{3-4}
		&& $(-3,2)$ & $x=-\frac{3}{2}+\frac{3}{2}\bi-\frac{1}{2}\bj-\frac{3}{2}\bk$ \\
		\hline
		\multirow{2}*{(3)} & \multirow{2}*{$(\bk,5\bi+3\bj)$} &$( 3,4)$ &
$x=\frac{3}{2}-\frac{3}{2}\bi-\frac{1}{2}\bj-\frac{1}{2}\bk$ \\
		\cline{3-4}
		&&$( -3,4)$ &  $x=-\frac{3}{2}+\frac{9}{4}\bi+\frac{7}{4}\bj-\frac{1}{2}\bk$ \\
		\hline
	\end{tabular}
\end{table}

\begin{table}[!htbp]
	\centering
	\caption{Some examples in Theorem \ref{thm3.2} with $D\neq 0$}
	\begin{tabular}{|p{2.75cm}|p{3.6cm}|p{2.78cm}|p{5.1cm}|}
		\hline
		& \centering $(b, c)$ & \centering $(z,T, N)$ &solution(s) of $x^2+bx+c=0$\\
		\hline
		\multirow{2}*{(4) $\&$ C1(i)L3.1}& & $(\frac{1}{2},\frac{\sqrt{2}}{2},\frac{1-\sqrt{2}}{4})$
 &$x=\frac{\sqrt{2}}{4}+\frac{\sqrt{2}-1}{2}\bi+\frac{\sqrt{2}-2}{4}\bj+\frac{1}{2}\bk$\\
		\cline{3-4}
		&\centering $(\bi+\bj,\frac{\bj}{4})$&$(\frac{1}{2},-\frac{\sqrt{2}}{2},\frac{1+\sqrt{2}}{4})$
 &$x=-\frac{\sqrt{2}}{4}-\frac{\sqrt{2}+1}{2}\bi-\frac{\sqrt{2}+2}{4}\bj+\frac{1}{2}\bk$\\
		\hline

		\multirow{2}*{(4) $\&$   C1(ii)L3.1}&& $(4,2,4)$ &$x=1-2\bi-\frac{4}{5}\bj-\frac{3}{5}\bk$\\
		\cline{3-4}
		&\centering $(\bi,3\bi+\bj+2\bk)$ &$(4,-2,1)$&$x=-1+\bi+\bk$\\
		\hline

		\multirow{2}*{(4) $\&$   C1(iii)L3.1}& & $(4,2,2)$ &$x=1-3\bi+2\bj-2\bk$\\
		\cline{3-4}
		&\centering $(\bi+2\bk,1+\bi)$ &$(4,-2,1)$ &$x=-1+2\bi+2\bj$\\
		\hline

		\multirow{2}*{(4) $\&$ C1(iv)L3.1}& & $(4,2,2)$ &$x=1-\frac{9}{4}\bi-\frac{1}{4}\bj-2\bk$\\
		\cline{3-4}
		&\centering $(\frac{11}{4}\bi+\frac{3}{4}\bj+3\bk,\bi+\bj)$&$(4,-2,0)$
&$x=-1+\frac{5}{2}\bi+\frac{5}{2}\bj+\bk$\\
			\hline
		
		\multirow{2}*{(4) $\&$ C1(v)L3.1}&& $(1,1,-\sqrt{11}-3)$
 &$x=\frac{1}{2}-\frac{2+\sqrt{11}}{2}\bi+\frac{1}{2}\bj-\frac{\sqrt{11}+4}{2}\bk$\\
		\cline{3-4}
		&\centering $(2\bi+\sqrt{11}\bk,\bj+\bk)$ &$(1,1,\sqrt{11}-3)$
&$x=-\frac{1}{2}+\frac{\sqrt{11}-6}{6}\bi+\frac{1}{6}\bj+\frac{8-3\sqrt{11}}{6}\bk$\\
		\hline
		\multirow{4}*{(5) $\&$ C2(vi)L3.1}& & $(1,1,0)$ &$x=\frac{1}{2}-\frac{3}{2}\bi+\frac{3}{2}\bj+\frac{1}{2}\bk$\\
		\cline{3-4}
		&\centering $(-\bi+2\bk,-\bi+\bj)$&$(1,-1,-2)$ &$x=-\frac{1}{2}+\frac{1}{2}\bi+\frac{1}{2}\bj-\frac{3}{2}\bk$\\
		\cline{3-4}
		&&$(4,2,1)$ &$x=1+5\bi-4\bj-3\bk$\\
		\cline{3-4}
		&&$(4,-2,0)$ &$x=-1-\bk$\\
		\hline

		\multirow{4}*{(5) $\&$ C2(vii)L3.1}&& $(4,2,-5)$ &$x=1-\frac{1}{2}\bi-\frac{3}{2}\bj-2\bk$\\
		\cline{3-4}
		&\centering $(2\bj+2\bk,2\bj+\bk)$ &$(4,-2,1)$ &$x=-1-\frac{1}{2}\bi-\frac{1}{2}\bj$\\
		\cline{3-4}
		&&$(6,\sqrt{6},-1-\sqrt{6})$ &$x=\frac{\sqrt{6}}{2}-\bi-\bj-(\frac{\sqrt{6}}{2}+1)\bk$\\
		\cline{3-4}
		&&$(6,-\sqrt{6},\sqrt{6}-1)$ &$x=-\frac{\sqrt{6}}{2}-\bi-\bj+(\frac{\sqrt{6}}{2}-1)\bk$\\
		\hline

		\multirow{6}*{(6) $\&$   C3L3.1}&& $(1,1,-6)$ &$x=\frac{1}{2}-\frac{5}{6}\bi-\frac{3}{2}\bj-\frac{13}{6}\bk$\\
		\cline{3-4}
		& & $(1,-1,0)$ &$x=-\frac{1}{2}-\bi-\bj-\frac{1}{2}\bk$\\
		\cline{3-4}
		&\centering $(2\bj+3\bk,3+\bi+3\bj-\bk)$&$(4,2,-3)$ &$x=1-\bi-\bj-2\bk$\\
		\cline{3-4}
		&&$(4,-2,0)$ &$x=-1-\frac{13}{9}\bi-\frac{15}{9}\bj-\frac{5}{9}\bk$\\
		\cline{3-4}
		&&$(9,3,0)$ &$x=\frac{3}{2}-2\bi-\frac{5}{2}\bk$\\
		\cline{3-4}
		&&$(9,-3,2)$ &$x=-\frac{3}{2}-\frac{7}{2}\bi-\frac{7}{2}\bj+\frac{1}{2}\bk$\\
		\hline	
	\end{tabular}
\end{table}

\subsection{Some technical details}
Table 4 (2)I shows that we may have pairs (T,N) such that $T+b\in Z(\bh_s)$.
 In such case we can not use the formula $x=(T+b)^{-1}(N-c)$.
 By  the cubic equation (\ref{cubeq}), we can figure out this
situation  in advance.

\begin{pro}\label{mainprop1}  
Let $b=b_1\bi+b_2\bj+b_3\bk\neq 0 ,c=c_0+c_1\bi+c_2\bj+c_3\bk\in\bh_s$ with $M_b\ge 0$
 and  \begin{equation}\label{coeffadd1}B=2P_{1,c}-M_b, E=I_{c},D=2P_{bc}.\end{equation}
	Let $T$ be given by Lemma \ref{lem1cub}.
Then there exist  a  $T$  such that $T+b\in Z(\bh_s)$  if and only if
\begin{equation}\label{condD}M_bM_c=K_{bc}^2.\end{equation}
\end{pro}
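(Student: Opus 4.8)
The plan is to translate the condition $T+b\in Z(\bh_s)$ into an algebraic relation on $B,E,D$ and then recognize that relation as a factor of the cubic $f(z)=z^3+2Bz^2+(B^2-4E)z-D^2$ evaluated at the distinguished value $z=M_b$. First I would compute $I_{T+b}$ directly. Since $T\in\br$, $b=b_1\bi+b_2\bj+b_3\bk$, and $I_{T+b}=\langle T+b,T+b\rangle=T^2+2T\langle 1,b\rangle+I_b=T^2+I_b$ (because $P_{1,b}=\Re(b)=0$), the condition $T+b\in Z(\bh_s)$ is exactly $T^2=-I_b=M_b$. Recalling that $z=T^2$ is a positive root of the cubic (\ref{cubeq}), this says precisely that $z=M_b$ is a root of $f$, i.e. $f(M_b)=0$.

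Next I would substitute $z=M_b$ into $f$ and simplify, using the coefficient identities $B=2P_{1,c}-M_b=2c_0-M_b$, $E=I_c$, and $D=2P_{bc}$. The key is to reorganize $f(M_b)=M_b^3+2BM_b^2+(B^2-4E)M_b-D^2$ into something recognizable. The natural quantities to aim for are $M_c=c_0^2-I_c$ (the discriminant appearing in (\ref{se3c})), and $K_{bc}=-b_1c_1+b_2c_2+b_3c_3$, since the target identity is $M_bM_c=K_{bc}^2$. I expect that after substituting $B=2c_0-M_b$ the terms involving $M_b^3$ and $M_b^2$ cancel cleanly, leaving $f(M_b)=M_b(\text{something}\cdot M_b+4c_0^2-4I_c)-D^2$ which should collapse to a multiple of $M_bM_c-K_{bc}^2$. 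The algebra that makes $D^2=(2P_{bc})^2$ combine with the remaining terms is where I would be most careful: one must expand $P_{bc}=\langle b,c\rangle=-b_1c_1+b_2c_2+b_3c_3\cdot(\text{signs})$ and compare it against $K_{bc}$, keeping the $\br^{2,2}$ versus $\br^{2,1}$ signature bookkeeping straight.

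The main obstacle will be precisely this bookkeeping of the two different bilinear forms $P$ and $K$ together with $I$ and $M$. Because $b$ is purely imaginary one has $I_b=-M_b$ and $P_{bc}=\langle b,c\rangle$, which differs from $K_{bc}$ only by the real-part contribution $b_0c_0$ (here $b_0=0$), so on purely imaginary $b$ the forms $P_{bc}$ and $K_{bc}$ should coincide; verifying this and then showing $D^2=4K_{bc}^2$ is the crux. Once $f(M_b)$ is shown to equal a nonzero constant times $M_bM_c-K_{bc}^2$, the equivalence $f(M_b)=0 \iff M_bM_c=K_{bc}^2$ follows immediately.

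I would organize the write-up as follows: establish $I_{T+b}=T^2+I_b$ and hence $T+b\in Z(\bh_s)\iff T^2=M_b$; observe via Lemma \ref{lem1cub}/(\ref{cubeq}) that such a $T$ exists among the admissible roots iff $z=M_b$ satisfies $f(M_b)=0$; then carry out the substitution and simplification to obtain $f(M_b)=4(M_bM_c-K_{bc}^2)$ (up to the correct constant), which yields the stated criterion (\ref{condD}). A brief remark should confirm that $M_b\ge 0$ guarantees $z=M_b$ lies in the relevant range $[0,\infty)$, so that a genuine real $T=\pm\sqrt{M_b}$ is produced.
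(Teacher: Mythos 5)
Your proposal is correct and follows essentially the same route as the paper: reduce $T+b\in Z(\bh_s)$ to $T^2=M_b$, substitute $z=M_b$ into the cubic, and use $M_b+B=2c_0$, $4c_0^2-4E=4M_c$ and $D^2=4P_{bc}^2=4K_{bc}^2$ to obtain $f(M_b)=4(M_bM_c-K_{bc}^2)$. One minor bookkeeping correction: with $b_0=0$ one has $P_{bc}=-K_{bc}$ rather than $P_{bc}=K_{bc}$, but since only the squares enter, your argument is unaffected.
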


\begin{proof} It is obvious that $T+b\in Z(\bh_s)$ implies $T^2=M_b$.  If $z=M_b$ is a root of $$z^3+2Bz^2+(B^2-4E)z-D^2=0$$ then
\begin{equation}M_b\big(M_b^2+2BM_b+B^2-4E\big)-D^2=0.\end{equation}
Because  $$M_b+B=2c_0,4c_0^2-4E=4M_c,D^2=4P^2_{bc}=4K^2_{bc},$$  we have
$$M_bM_c=K_{bc}^2.$$
\end{proof}

The pairs of  $(b,c)$ of Tables 4 and 5  except Table 4 (2)I do not satisfy  Eq.(\ref{condD}).
To get all solutions of Equation II, we need use Theorems \ref{thm3.1} and \ref{thm3.2} together. For example,
 from Table 3 (2)I and Table 4 (2)I, we know that the set of solutions of  the equation $$x^2 +(\bi+\bj)x-1+\bi+\bj= 0$$
is $$\{-1,1-\bi-\bj\}\cup \{x_1\bi+x_1\bj+\bk, \forall x_1\in \br\}.$$

\section{Equation III}

For later use and reduce duplication, we begin with  the quadratic equation $$ax^2+bx+c=0,a=1+a_2\bj+a_3\bk\in
Z(\bh_s),b=b_1\bi+b_2\bj+b_3\bk.$$
The real nonlinear system (\ref{rsym2}) of the above equation  reduces to
\begin{eqnarray}
 x_0^2-x_1^2+x_2^2+x_3^2+2a_2x_0x_2+2a_3x_0x_3-b_1x_1+b_2x_2+b_3x_3+c_0=0,\label{ae1}\\
               2x_0x_1-2a_2x_0x_3+2a_3x_0x_2+b_1x_0-b_2x_3+b_3x_2+c_1=0,\label{ae2}\\
 2x_0x_2+a_2(x_0^2-x_1^2+x_2^2+x_3^2)+2a_3x_0x_1-b_1x_3+b_2x_0+b_3x_1+c_2=0,\label{ae3}\\
 2x_0x_3-2a_2x_0x_1+a_3(x_0^2-x_1^2+x_2^2+x_3^2)+b_1x_2-b_2x_1+b_3x_0+c_3=0.\label{ae4}
\end{eqnarray}

\subsection{Equation $ax^2+c=0$ with $a=1+a_2\bj+a_3\bk\in Z(\bh_s)$}

 In this subsection, we consider $ax^2+c=0$ with $a=1+a_2\bj+a_3\bk\in Z(\bh_s)$.

  We begin with the definition of  Moore-Penrose inverse   in split quaternions \cite{cao}.
  The  Moore-Penrose inverse of  $a=t_1+t_2\bj,t_1,t_2\in \bc$ is defined to be
$$a^+=\left\{\begin{array}{ll}
0, & \hbox{if\, a=0;} \\
\frac{\overline{t_1}-t_2\bj}{|t_1|^2-|t_2|^2}=\frac{\overline{a}}{I_a}, & \hbox{if\, $I_a\neq 0$;} \\
\frac{\overline{t_1}+t_2\bj}{4|t_1|^2}, & \hbox{if\, $I_a=0$.} \\
\end{array}
\right.$$
For $a=t_1+t_2\bj\in  Z(\bh_s)-\{0\}$, we have the following equations:
 \begin{equation*}aa^+a=a,\  a^+aa^+=a^+,\  aa^+=\frac{1}{2}\big(1+\frac{t_2}{\overline{t_1}}\bj\big),
 \  a^+a=\frac{1}{2}\big(1+\frac{t_2}{t_1}\bj\big).\end{equation*}

\begin{lem}[cf.\cite{cao}]\label{lemmoor}
	Let $a=t_1+t_2\bj\in  Z(\bh_s)-\{0\}$. Then the equation $ax=d$ is  solvable
	if and only if $aa^+d=\frac{1}{2}(1+\frac{t_2}{\overline{t_1}}\bj)d=d$,
 in which case all solutions are given by $$x=a^+d+(1-a^+a)y
 =\frac{\overline{t_1}+t_2\bj}{4|t_1|^2}d+\frac{1}{2}(1-\frac{t_2}{t_1}\bj)y,\forall y\in \bh_s.$$
\end{lem}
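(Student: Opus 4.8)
The plan is to treat $ax=d$ as a left-multiplication equation and to run everything through the four identities recorded just before the lemma, namely $aa^+a=a$, $a^+aa^+=a^+$, $aa^+=\frac12(1+\frac{t_2}{\overline{t_1}}\bj)$ and $a^+a=\frac12(1+\frac{t_2}{t_1}\bj)$. Since $a\in Z(\bh_s)-\{0\}$ we have $I_a=|t_1|^2-|t_2|^2=0$ with $t_1\neq 0$, so $a^+=\frac{\overline{t_1}+t_2\bj}{4|t_1|^2}$ is exactly the value of the Moore--Penrose inverse from the defining formula; this is the object all computations will pass through. Crucially, no step uses invertibility of $a$ (which fails), only associativity and the listed identities.

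First I would prove necessity of the solvability condition. If $ax=d$ for some $x$, then using $aa^+a=a$ and associativity I would write $d=ax=(aa^+a)x=aa^+(ax)=aa^+d$, which is precisely $aa^+d=\frac12(1+\frac{t_2}{\overline{t_1}}\bj)d=d$ once the formula for $aa^+$ is substituted. For sufficiency, assuming $aa^+d=d$, I would exhibit the particular solution $x_*=a^+d$ and verify $ax_*=aa^+d=d$; this simultaneously establishes solvability and produces one explicit root.

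Next I would identify the full solution set with $\{a^+d+(1-a^+a)y:y\in\bh_s\}$ by a double inclusion. For the forward inclusion I would compute $a\big(a^+d+(1-a^+a)y\big)=aa^+d+ay-aa^+ay=d+ay-ay=d$, again invoking $aa^+d=d$ and $aa^+a=a$, so the homogeneous part $(1-a^+a)y$ is annihilated by $a$. For the reverse inclusion, given any solution $x$ I would use $a^+d=a^+(ax)=a^+ax$ to rewrite $x=a^+d+(x-a^+ax)=a^+d+(1-a^+a)x$, exhibiting $x$ in the required form with $y=x$. Substituting $a^+=\frac{\overline{t_1}+t_2\bj}{4|t_1|^2}$ and $1-a^+a=\frac12(1-\frac{t_2}{t_1}\bj)$ then yields the stated closed form.

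The manipulations are short; the point that demands care is purely the noncommutativity. Because $a$ has no inverse I cannot cancel it, so every reduction must be arranged to feed into one of the four identities in the exact multiplicative order in which the factors occur, and I must resist commuting $t_2\bj$ past complex scalars (recall $\bj z=\overline{z}\bj$). The only real subtlety is the bookkeeping of left/right placement together with verifying that $a^+a$ (equivalently $aa^+$) acts as the relevant idempotent, which is exactly what makes the reverse inclusion close; I expect this to be the sole place where an error could slip in.
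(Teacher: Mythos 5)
Your proof is correct. Note that the paper itself gives no proof of this lemma --- it is quoted from the reference \cite{cao} (Cao--Chang) --- so there is no in-paper argument to compare against; your derivation is the standard Moore--Penrose projector argument using only associativity and the identities $aa^+a=a$, $a^+a=\frac{1}{2}\big(1+\frac{t_2}{t_1}\bj\big)$ recorded just before the statement, and every step checks out: necessity via $d=(aa^+a)x=aa^+d$, sufficiency via the particular solution $a^+d$, the forward inclusion because $a(1-a^+a)=0$, the reverse inclusion by writing $x=a^+d+(1-a^+a)x$, and the closed form from $1-a^+a=\frac{1}{2}\big(1-\frac{t_2}{t_1}\bj\big)$. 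Your caution about never cancelling $a$ and never commuting $t_2\bj$ past complex scalars is exactly the right discipline here.
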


 \begin{thm}\label{thm4.1}
  The quadratic equation  $ax^2+c=0$ is solvable if and only if $$ac=2c.$$
  \begin{itemize}
		\item [(1)] If  $c_1=0$ then $ax^2+c=0$ has solutions
  \begin{equation}x=x_1\bi+x_2\bj+x_3\bk,\end{equation}
   where $-x_1^2+x_2^2+x_3^2+c_0=0.$  Moreover, if $c_0\leq 0$ then $ax^2+c=0$ also has
 solutions \begin{equation}x=\pm \sqrt{-c_0}-a_2x_2-a_3x_3+(a_2x_3-a_3x_2)\bi+x_2\bj+x_3\bk,\forall x_2,x_3\in
 \br.\end{equation}

  \item [(2)] If $c_1 \neq 0$ then $ax^2+c=0$ has  solutions
  $$ x=T+x_1\bi+x_2\bj+x_3\bk, \forall x_2,x_3\in\br,$$
where $T\in \br$ is a solution of the following quartic equation
\begin{eqnarray}
z^4+2(a_2x_2+a_3x_3)z^3+[(a_2x_2+a_3x_3)^2+c_0]z^2
+(a_2x_3-a_3x_2)c_1z-\frac{c_1^2}{4}=0\label{sx0thm}
 \end{eqnarray}
and
 $$x_1=\frac{-c_1}{2T}+a_2x_3-a_3x_2.$$
  \end{itemize}
   \end{thm}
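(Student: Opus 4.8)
The plan rests on one structural fact about the coefficient. Since $a=1+a_2\bj+a_3\bk\in Z(\bh_s)$ we have $\Re(a)=1$ and $I_a=1-a_2^2-a_3^2=0$, hence $a^2=a(2\Re(a)-\bar a)=2a-I_a=2a$; thus $a$ is twice an idempotent. This immediately gives the necessity of the solvability condition: rewriting the equation as $ax^2=-c$ and left-multiplying by $a$, associativity and $a^2=2a$ yield $a^2x^2=2(ax^2)=-2c$, while $a(ax^2)=a(-c)=-ac$, so $-ac=-2c$, i.e. $ac=2c$. This half is short and uses nothing beyond $a^2=2a$.

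For sufficiency and for the explicit formulas I would descend to the real nonlinear system $(\ref{ae1})$--$(\ref{ae4})$ with $b=0$. First I would record that, expanding $ac$ by the multiplication rule $(\ref{rule})$, the condition $ac=2c$ is equivalent to the four scalar relations
\[
a_2c_2+a_3c_3=c_0,\quad a_3c_2-a_2c_3=c_1,\quad a_2c_0+a_3c_1=c_2,\quad a_3c_0-a_2c_1=c_3,
\]
and that, because $a_2^2+a_3^2=1$, the last two follow from the first two by solving the first two for $c_2,c_3$. The key payoff is a reduction of the system. Writing $S=x_0^2-x_1^2+x_2^2+x_3^2$, one checks that $a_2\cdot(\ref{ae3})+a_3\cdot(\ref{ae4})$ reproduces $(\ref{ae1})$ and $a_3\cdot(\ref{ae3})-a_2\cdot(\ref{ae4})$ reproduces $(\ref{ae2})$ once these relations are inserted, and conversely $(\ref{ae3}),(\ref{ae4})$ are recovered from $(\ref{ae1}),(\ref{ae2})$ by the inverse orthogonal combination. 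Hence, under $ac=2c$, the whole system collapses to the two equations $(\ref{ae1})$ and $(\ref{ae2})$, and no solution is lost.

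It then remains to solve $(\ref{ae1}),(\ref{ae2})$. Introducing $m=a_2x_2+a_3x_3$ and $n=a_3x_2-a_2x_3$ (so that $x_2^2+x_3^2=m^2+n^2$), equation $(\ref{ae2})$ reads $2x_0(x_1+n)+c_1=0$ and $(\ref{ae1})$ reads $x_0^2-x_1^2+x_2^2+x_3^2+2x_0m+c_0=0$. If $c_1\neq0$ then necessarily $x_0=T\neq0$, so $(\ref{ae2})$ gives $x_1=-c_1/(2T)+a_2x_3-a_3x_2$; substituting this into $(\ref{ae1})$ and clearing $T^2$ produces exactly the quartic $(\ref{sx0thm})$. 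Since that quartic has constant term $-c_1^2/4<0$ and positive leading coefficient, it has a real root for every choice of $x_2,x_3$, which proves solvability and yields the family of part (2). If $c_1=0$ the relations force $c=c_0a$, and I would split on $x_0$: when $x_0=0$, $(\ref{ae1})$ becomes $-x_1^2+x_2^2+x_3^2+c_0=0$, giving the purely imaginary family; when $x_0\neq0$, $(\ref{ae2})$ gives $x_1=a_2x_3-a_3x_2$, whence $-x_1^2+x_2^2+x_3^2=m^2$ and $(\ref{ae1})$ becomes $(x_0+m)^2=-c_0$, solvable precisely when $c_0\le0$ and giving the second family of part (1).

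The step I expect to be the main obstacle is the reduction: verifying cleanly that $(\ref{ae1})$--$(\ref{ae4})$ are equivalent, under $ac=2c$, to the pair $(\ref{ae1}),(\ref{ae2})$, so that discarding $(\ref{ae3}),(\ref{ae4})$ is harmless. This is exactly where the orthogonality of $(a_2,a_3)$ and $(a_3,-a_2)$ together with $a_2^2+a_3^2=1$ must be used with care; after that, the substitutions leading to the quartic $(\ref{sx0thm})$ and to $(x_0+m)^2=-c_0$ are routine. A secondary point meriting attention is confirming that solutions genuinely exist in \emph{every} case covered by $ac=2c$ — the real root of the quartic, and the always-nonempty imaginary quadric — since this is what closes the ``if'' direction and upgrades the condition to an equivalence.
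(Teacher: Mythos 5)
Your proposal is correct and follows essentially the same route as the paper: reduce to the real system \eqref{ae1}--\eqref{ae4} with $b=0$, show that under $ac=2c$ the system collapses to the two equations \eqref{beq1s4}--\eqref{beq2s4} via the orthogonal combinations with $(a_2,a_3)$, and then split on $c_1=0$ (giving the imaginary quadric and $(x_0+a_2x_2+a_3x_3)^2=-c_0$) versus $c_1\neq 0$ (giving the quartic, which has a real root since its constant term is $-c_1^2/4<0$). The one point where you diverge is the necessity of $ac=2c$: the paper invokes the Moore--Penrose inverse (Lemma \ref{lemmoor}, with $a^+=a/4$), whereas you derive it directly from $a^2=2a$ by left-multiplying $ax^2=-c$ by $a$ --- a more elementary argument that reaches the same condition.
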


\begin{proof}
Because  $a=1+(a_2+a_3\bi)\bj\in Z(\bh_s)$, we have
  \begin{equation}\label{ainvf}a^+=\frac{a}{4},\ \ aa^+=a^+a=\frac{a}{2}.\end{equation}
  If $ax^2+c=0$ is solvable, we assume $x$ is a solution of it.  Let $Y=x^2$. Then
$$aY=-c.$$
By Lemma \ref{lemmoor}, the above equation has a solution $Y$ if and only if
$$aa^+c=c.$$
That is  $$ac=2c.$$
The condition $ac=2c$ can be reformulated as  the following four equations:
$$-c_0+a_2c_2+a_3c_3=0, -c_1-a_2c_3+a_3c_2=0,$$
$$ -c_2+a_2c_0+a_3c_1=0, -c_3-a_2c_1+a_3c_0=0.$$
Under the conditions $ac=2c$ and $b=0$, we have
\begin{center}Eq.(\ref{ae3})$=$ Eq.(\ref{ae1})$\times a_3-$Eq.(\ref{ae2})$\times a_2$,\quad
  Eq.(\ref{ae4})$=$ Eq.(\ref{ae1})$\times a_2-$Eq.(\ref{ae2})$\times a_3$.\end{center}
  Therefore,  $ax^2+c=0$  with $ac=2c$  actually only has the following  two independent equations in (\ref{rsym2}):
\begin{eqnarray}
 x_0^2-x_1^2+x_2^2+x_3^2+2(a_2x_2+a_3x_3)x_0+c_0=0,\label{beq1s4}\\
               2x_0(x_1-a_2x_3+a_3x_2)+c_1=0.\label{beq2s4}
 \end{eqnarray}
To solve the above real system, we divide it into two cases: $$c_1=0 \mbox{ and } c_1\neq 0.$$

If $c_1=0$ then by Eq.(\ref{beq2s4}) we have $$x_0=0 \mbox{ or  }x_1-a_2x_3+a_3x_2=0.$$
If $x_0=0$ holds, substituting it in Eq.(\ref{beq1s4}), we get $-x_1^2+x_2^2+x_3^2+c_0=0$.
So we have a solution $$x=x_1\bi+x_2\bj+x_3\bk,\mbox{ where }-x_1^2+x_2^2+x_3^2+c_0=0.$$
If  $x_1-a_2x_3+a_3x_2=0$ then we have $x_1=a_2x_3-a_3x_2$.
Substituting this in Eq.(\ref{beq1s4}), we obtain
$$(x_0+a_2x_2+a_3x_3)^2+c_0=0.$$
If $c_0\leq 0$ then $x_0+a_2x_2+a_3x_3=\pm \sqrt{-c_0}$.
And therefore $x_0=\pm \sqrt{-c_0}-a_2x_2-a_3x_3$.
Hence we have a solution $$x=\pm \sqrt{-c_0}-a_2x_2-a_3x_3+(a_2x_3-a_3x_2)\bi+x_2\bj+x_3\bk,\forall x_2,x_3\in \br.$$

 If $c_1\neq 0$ then by  Eq.(\ref{beq2s4}) we have  $x_0\neq 0$ and  $x_1-a_2x_3+a_3x_2\neq 0$.
So we have
$$x_1=\frac{-c_1}{2x_0}+a_2x_3-a_3x_2.$$
Substituting this in Eq.(\ref{beq1s4}), we obtain
\begin{eqnarray}
x_0^4+2(a_2x_2+a_3x_3)x_0^3+[(a_2x_2+a_3x_3)^2+c_0]x_0^2
+(a_2x_3-a_3x_2)c_1x_0-\frac{c_1^2}{4}=0.\label{sx0}
 \end{eqnarray}
Let \begin{eqnarray}
f(z)=z^4+2(a_2x_2+a_3x_3)z^3+[(a_2x_2+a_3x_3)^2+c_0]z^2
+(a_2x_3-a_3x_2)c_1z-\frac{c_1^2}{4}.\label{sx0z}
 \end{eqnarray}
 Then $$f(0)=-\frac{c_1^2}{4}<0,\lim_{z\to +\infty} f(z)=+\infty,\lim_{z\to -\infty} f(z)=+\infty.$$
 This means that $f(z)=0$ has at least two real  solutions  $z_1\in (-\infty,0)$ and $z_2\in (0,\infty)$ for any $x_2,x_3\in
 \br$.
Let $T$ be a real solution of $f(z)=0$. Then Equation III has solutions
$$ x=T+x_1\bi+x_2\bj+x_3\bk,\forall x_2,x_3\in \br,$$
where
$$x_1=\frac{-c_1}{2T}+a_2x_3-a_3x_2.$$
\end{proof}

\begin{exam}\label{exam4.1}
Consider the quadratic equation $(1+\bj)x^2 -1-\bj=0$.
That is, $a=1+\bj,c=-1-\bj.$
 Hence we obtain two solutions:
 $$x=x_1\bi+x_2\bj+x_3\bk,$$ where $-x_1^2+x_2^2+x_3^2-1=0.$
 Since $c_0=-1<0$, we also have
 solutions $$x=(\pm 1-x_2)+x_3\bi+x_2\bj+x_3\bk,\forall x_2,x_3\in \br.$$
\end{exam}

\begin{exam}\label{exam4.2}
Consider the quadratic equation $(1+\frac{\sqrt{2}}{2}\bj+\frac{\sqrt{2}}{2}\bk)x^2 +2+\bi+\frac{3\sqrt{2}}{2}\bj
+\frac{\sqrt{2}}{2}\bk= 0$.
That is, $a=1+\frac{\sqrt{2}}{2}\bj+\frac{\sqrt{2}}{2}\bk,c=2+\bi+\frac{3\sqrt{2}}{2}\bj+\frac{\sqrt{2}}{2}\bk.$
Since $c_1=1$, we have solutions $$x=T+x_1\bi+x_2\bj+x_3\bk, \forall x_2,x_3\in\br,$$
where $T\in \br$ is a solution of the following quartic equation
\begin{eqnarray}
z^4+\sqrt{2}(x_2+x_3)z^3+[\frac{1}{2}(x_2+x_3)^2+2]z^2
+\frac{\sqrt{2}}{2}(x_3-x_2)z-\frac{1}{4}=0\label{sx0thmexa}
 \end{eqnarray}
and
 $$x_1=\frac{-c_1}{2T}+a_2x_3-a_3x_2.$$
For example, if we set $x_2=2,x_3=3$, then  Eq.(\ref{sx0thmexa}) has two solutions
 $T_1=-0.1658$ and $T_2=0.1069$.
  So we have  two solutions
 $$x=0.1069-3.9708\bi+2\bj+3\bk \mbox{ and } x=-0.1658+3.7222\bi+2\bj+3\bk.$$
\end{exam}

\section{Equation IV}

In this section, we will consider Equation IV, that is, the quadratic equation
$$ax^2+bx+c=0,a=1+a_2\bj+a_3\bk\in Z(\bh_s),b=b_1\bi+b_2\bj+b_3\bk\neq 0.$$
For the sake of simplification, we define the following three numbers of Equation IV:
 \begin{equation}\label{t1t2d}\delta=a_2b_3-a_3b_2+b_1,t_1=c_2-c_0a_2-a_3c_1,t_2=c_3-c_0a_3+a_2c_1.\end{equation}

We begin with a proposition for later use. The proposition describe the linear relation of $x_i,i=0,\cdots, 3$.
\begin{pro}\label{prop5.1}
Suppose that $P_{ab}=0$ and $I_a=0$. Then the solution $x$ of $ax^2+bx+c=0$ satisfies the following linear equation:
  \begin{equation}\label{lineareqn} Ay=u,\end{equation}
  where  $y=(x_0,x_1,x_2,x_3)^T$ and \begin{equation}\label{matrixA1n}A=\left(
  \begin{array}{cccc}
      b_2-a_3b_1&a_2b_1+b_3&0&-\delta\\
   a_2b_1+b_3&a_3b_1-b_2&\delta&0
    \end{array}
\right),u=\left(
                 \begin{array}{c}
                   -t_1\\
                   -t_2
                   \end{array}
               \right).\end{equation}
\end{pro}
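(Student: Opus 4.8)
The cleanest route exploits that $a$ is a zero divisor: since $a=1+a_2\bj+a_3\bk\in Z(\bh_s)$ we have $\bar a a=I_a=0$, so left-multiplying the equation by $\bar a$ annihilates the quadratic term entirely. The plan is therefore to multiply $ax^2+bx+c=0$ on the left by $\bar a$, obtaining
\[\bar a a\,x^2+\bar a b\,x+\bar a c=(\bar a b)\,x+\bar a c=0,\]
a genuinely \emph{linear} equation in $x$. This single cancellation absorbs both the quadratic nonlinearity and (most of) the noncommutativity; everything afterward is componentwise bookkeeping using the multiplication table (\ref{rule}). Note that the hypothesis $I_a=0$ is exactly what is needed here, and it holds automatically for Equation IV.

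Next I would compute $\bar a b$ and $\bar a c$ explicitly, with $\bar a=1-a_2\bj-a_3\bk$. The real part of $\bar a b$ comes out to be $-(a_2b_2+a_3b_3)=P_{ab}$, so this is precisely where the assumption $P_{ab}=0$ enters: it forces $\bar a b$ to be purely imaginary, $\bar a b=\delta\,\bi+(b_2-a_3b_1)\bj+(a_2b_1+b_3)\bk$, with $\delta$ as in (\ref{t1t2d}). A parallel computation shows that the $\bj$- and $\bk$-components of $\bar a c$ are exactly $t_1$ and $t_2$. Writing $w=\bar a b=w_1\bi+w_2\bj+w_3\bk$ with $w_1=\delta$, $w_2=b_2-a_3b_1$, $w_3=a_2b_1+b_3$, I would expand $wx$ in components; the $\bj$-component equals $w_2x_0+w_3x_1-w_1x_3$ and the $\bk$-component equals $w_3x_0-w_2x_1+w_1x_2$ (the would-be $x_2$ and $x_3$ terms drop out precisely because $\Re(w)=P_{ab}=0$). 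Setting the $\bj$- and $\bk$-components of $(\bar a b)x+\bar a c$ to zero then reproduces verbatim the two rows of $Ay=u$.

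The only real labor is the two componentwise products $\bar a b$, $\bar a c$ and the expansion of $wx$, all of which are routine once the basis relations are applied; the conceptual heart of the argument is just the identity $\bar a a=0$. As a consistency check I would also record that $I_w=I_{\bar a}I_b=0$, so $w=\bar a b\in Z(\bh_s)$ and the real-linear map $x\mapsto wx$ has rank $2$; this is why only two independent scalar relations appear, and why recording the $\bj$- and $\bk$-components suffices, the real and $\bi$ components being dependent on them. The main thing to be careful about is simply getting the signs right in the products, since the split-quaternion rules $\bj^2=\bk^2=1$ differ from the Hamiltonian case.
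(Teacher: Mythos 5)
Your proposal is correct, and every computation in it checks out: $\bar a b$ has real part $-(a_2b_2+a_3b_3)=P_{ab}=0$ and imaginary part $\delta\bi+(b_2-a_3b_1)\bj+(a_2b_1+b_3)\bk$, the $\bj$- and $\bk$-components of $\bar a c$ are indeed $t_1$ and $t_2$, and the $\bj$- and $\bk$-components of $(\bar a b)x+\bar a c=0$ reproduce the two rows of $Ay=u$ exactly. The paper reaches the same two equations by a different organization of the work: it writes out the four real component equations (\ref{ae1})--(\ref{ae4}) of $ax^2+bx+c=0$ and forms the combinations Eq.(\ref{ae3})$-a_2\cdot$Eq.(\ref{ae1})$-a_3\cdot$Eq.(\ref{ae2}) and Eq.(\ref{ae4})$-a_3\cdot$Eq.(\ref{ae1})$+a_2\cdot$Eq.(\ref{ae2}), checking via $a_2^2+a_3^2=1$ and $a_2b_2+a_3b_3=0$ that the quadratic terms cancel. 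Those row operations are precisely the $\bj$- and $\bk$-components of left multiplication by $\bar a=1-a_2\bj-a_3\bk$, so the two proofs perform the identical elimination; what your version buys is a structural explanation of \emph{why} the cancellation happens (the single identity $\bar a a=I_a=0$ kills the quadratic term wholesale, and $\Re(\bar a b)=P_{ab}=0$ is what empties the $(1,3)$ and $(2,4)$ entries of $A$), whereas the paper's version verifies it term by term. Your closing remark that $I_{\bar a b}=0$ forces left multiplication by $\bar a b$ to have rank $2$ is a nice bonus observation, though it is not needed for the proposition as stated, which only asserts that solutions satisfy these two relations.
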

\begin{proof}
Note that $a_2^2+a_3^2=1$ and $a_2b_2+a_3b_3=0$.
 Using Eq.(\ref{ae3})$-$Eq.(\ref{ae1})$\times a_2-$Eq.(\ref{ae2})$\times a_3$, we have
\begin{equation}\label{new3}(b_2-a_3b_1)x_0+(a_2b_1+b_3)x_1+(a_3b_2-a_2b_3-b_1)x_3+c_2-a_2c_0-a_3c_1=0.\end{equation}
Using Eq.(\ref{ae4})$-$Eq.(\ref{ae1})$\times a_3+$ Eq.(\ref{ae2})$\times a_2$, we have
\begin{equation}\label{new4}(a_2b_1+b_3)x_0+(a_3b_1-b_2)x_1+(a_2b_3-a_3b_2+b_1)x_2+c_3+a_2c_1-a_3c_0=0.\end{equation}
This completes the proof.
\end{proof}

\subsection{The solutions of form $2x_0a+b\in Z(\bh_s)$}
In this subsection, we will find the necessary and sufficient conditions of  Equation IV having a solution
 such that  $2x_0+b\in Z(\bh_s)$.

  Suppose $x=x_0+x_1\bi+x_2\bj+x_3\bk\in SZ$ is a solution of  Eq.(\ref{linearx1}).
By Eq.(\ref{linearx1}), we have
\begin{equation}\label{inpax0}\left\langle 2x_0a+b,2x_0a+b \right\rangle=4x_0P_{ab}+I_b=0.\end{equation}
Based on this, we  divide our consideration into  two cases: $$P_{ab}\neq 0\mbox{ and }P_{ab}=0.$$
  \subsubsection{$P_{ab}\neq 0$}
If $P_{ab}\neq 0$ then by (\ref{inpax0}) we have \begin{equation}\label{slx0}x_0=\frac{-I_b}{4P_{ab}}.\end{equation}
 We reformulate  Eqs.(\ref{ae1}) and (\ref{ae2}) as
\begin{eqnarray}
 -x_1^2+x_2^2+x_3^2-b_1x_1+(b_2+2a_2x_0)x_2+(b_3+2a_3x_0)x_3+c_0+x_0^2&=&0,\label{1aeq1}\\
               2x_0x_1+(b_3+2a_3x_0)x_2-(b_2+2a_2x_0)x_3&=&-c_1-b_1x_0.\label{1aeq2}
 \end{eqnarray}
   Using  Eq.(\ref{ae1})$\times a_2$+Eq.(\ref{ae2})$\times a_3- $ Eq.(\ref{ae3})
    and Eq.(\ref{ae1})$\times a_3-$ Eq.(\ref{ae2})$\times a_2-$ Eq.(\ref{ae4}), we obtain
\begin{eqnarray}
  (-a_2b_1-b_3)x_1+(a_2b_2+a_3b_3)x_2+(a_2b_3-a_3b_2+b_1)x_3&=&c_2-c_0a_2-a_3c_1+(b_2-a_3b_1)x_0,\label{1aeq3}\\
 (-a_3b_1+b_2)x_1+(a_3b_2-a_2b_3-b_1)x_2+(a_2b_2+a_3b_3)x_3&=& c_3-c_0a_3+a_2c_1+(b_3+a_2b_1)x_0.\label{1aeq4}
\end{eqnarray}
Let $y=(x_1,x_2,x_3)^T$.  Eqs.(\ref{1aeq2})-(\ref{1aeq4}) can be expressed as
  \begin{equation}\label{lineareqs5} Ay=u,\end{equation}
 where
    \begin{equation}\label{matrixA}A=\left(
  \begin{array}{ccc}
  2x_0&b_3+2a_3x_0&-b_2-2a_2x_0\\
       -a_2b_1-b_3 & a_2b_2+a_3b_3 &a_2b_3-a_3b_2+b_1\\
    -a_3b_1+b_2 & a_3b_2-a_2b_3-b_1 & a_2b_2+a_3b_3 \\
    \end{array}
\right)\end{equation} and  \begin{equation}\label{matrixu}u=\left(
                 \begin{array}{c}
                   -c_1-b_1x_0\\
                  t_1+(b_2-a_3b_1)x_0 \\
                  t_2+(b_3+a_2b_1)x_0\\
                   \end{array}
               \right).\end{equation}

\begin{pro}\label{prop5.2} Let $x_0=\frac{-I_b}{4P_{ab}}$ and $a_2^2+a_3^2=1$. Let  $A$ be given by (\ref{matrixA}). Then
 $$\det(A)=0.$$
\end{pro}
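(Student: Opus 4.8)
The plan is to prove the singularity by a direct cofactor expansion of the $3\times 3$ matrix $A$ in (\ref{matrixA}), after which the whole expression collapses to a single scalar factor that vanishes precisely because of the hypothesis on $x_0$. First I would fix convenient shorthand: write $p=a_2b_2+a_3b_3$, so that $P_{ab}=-p$ by (\ref{inp2}), and keep $\delta=b_1+(a_2b_3-a_3b_2)$ from (\ref{t1t2d}); I will also record $a_2^2+a_3^2=1$ (the standing hypothesis, equivalently $I_a=0$) and $M_b=b_2^2+b_3^2-b_1^2=-I_b$. With this notation the lower-right $2\times 2$ block of $A$ has the form $\bigl(\begin{smallmatrix}p&\delta\\-\delta&p\end{smallmatrix}\bigr)$, whose determinant is $p^2+\delta^2$.

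Next I would expand $\det A$ along the first row as $\det A=2x_0(p^2+\delta^2)-(b_3+2a_3x_0)M_{12}-(b_2+2a_2x_0)M_{13}$, where $M_{12}=-p(a_2b_1+b_3)-\delta(b_2-a_3b_1)$ and $M_{13}=\delta(a_2b_1+b_3)-p(b_2-a_3b_1)$ are the two remaining minors. I then split the expansion into the part free of $x_0$ and the part linear in $x_0$. For the $x_0$-free part $-b_3M_{12}-b_2M_{13}$, I would group the $p$- and $\delta$-coefficients; using $\delta-b_1=a_2b_3-a_3b_2$ one finds $b_1(a_2b_3-a_3b_2)-\delta b_1=-b_1^2$, so this part equals $p(b_2^2+b_3^2-b_1^2)=pM_b$. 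For the $x_0$-linear part I would verify the key identity $a_3M_{12}+a_2M_{13}=\delta^2-p^2$ (this is exactly where both $a_2^2+a_3^2=1$ and the precise value of $\delta$ are needed), which turns that part into $2x_0(p^2+\delta^2)-2x_0(\delta^2-p^2)=4x_0p^2$. Combining the two pieces gives the clean factorization $\det A=pM_b+4x_0p^2=p\,(M_b+4x_0p)$.

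Finally I would invoke the hypothesis $x_0=-I_b/(4P_{ab})$ of (\ref{slx0}). Since $P_{ab}=-p$ and $M_b=-I_b$, the bracket becomes $M_b+4x_0p=-I_b-4x_0P_{ab}=-(4x_0P_{ab}+I_b)$, which is zero by the zero-divisor relation (\ref{inpax0}); hence $\det A=0$, as claimed. This also explains conceptually why the vanishing is forced: $2x_0a+b\in Z(\bh_s)$ makes left multiplication by $2x_0a+b$ non-invertible, and the factor $M_b+4x_0p$ is nothing but a restatement of (\ref{inpax0}).

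I expect the main obstacle to be the two algebraic collapses in the middle paragraph—confirming that the $x_0$-free part reduces to $pM_b$ and that $a_3M_{12}+a_2M_{13}=\delta^2-p^2$—because these are the steps that genuinely use $a_2^2+a_3^2=1$ together with $\delta=b_1+a_2b_3-a_3b_2$, and they are routine but easy to mishandle sign-wise. Once the factorization $\det A=p\,(M_b+4x_0p)$ is established, the conclusion is immediate from (\ref{slx0}) and (\ref{inpax0}).
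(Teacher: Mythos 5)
Your computation is correct: I verified that with $p=a_2b_2+a_3b_3=-P_{ab}$ the two minors are $M_{12}=-p(a_2b_1+b_3)-\delta(b_2-a_3b_1)$ and $M_{13}=\delta(a_2b_1+b_3)-p(b_2-a_3b_1)$, that $-b_3M_{12}-b_2M_{13}=pM_b$ and $a_3M_{12}+a_2M_{13}=\delta^2-p^2$ under $a_2^2+a_3^2=1$, and hence that $\det(A)=p(M_b+4x_0p)=P_{ab}(4x_0P_{ab}+I_b)$, which vanishes precisely under the hypothesis $x_0=\frac{-I_b}{4P_{ab}}$. The paper reaches the same conclusion by a different mechanical route: it applies the column operations $C_2\mapsto C_2-a_3C_1$ and $C_3\mapsto C_3+a_2C_1$ to strip the $x_0$-terms out of the last two columns, obtaining the matrix $B$ of (\ref{matrixB}), and then simply asserts that $\det(B)=0$ ``can be verified'' --- a verification that is left to the reader and that still must use the value of $x_0$, since $\det(B)=\det(A)$ is linear in $x_0$ with leading coefficient $4P_{ab}^2\neq 0$. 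Your version buys something the paper's does not: the closed-form factorization $\det(A)=P_{ab}\,(4x_0P_{ab}+I_b)$, valid for arbitrary $x_0$, which isolates exactly where the hypothesis enters and identifies the vanishing of $\det(A)$ with the zero-divisor condition (\ref{inpax0}); this makes explicit the conceptual remark you add at the end, namely that the rank drop of $A$ is forced by the non-invertibility of $2x_0a+b$. The only caution is the one you flag yourself: the two algebraic collapses are sign-sensitive, but as written they check out.
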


\begin{proof}
Let \begin{equation}\label{matrixB}B=\left(
  \begin{array}{ccc}
  2x_0&b_3&-b_2\\
       -a_2b_1-b_3 & a_2b_2+2a_3b_3+a_2a_3b_1 &-a_3b_2+a_3^2b_1 \\
    -a_3b_1+b_2 & -a_2b_3-a_2^2b_1& 2a_2b_2+a_3b_3-a_2a_3b_1 \\
    \end{array}
\right).\end{equation}
It is obvious that $B$ is obtained by performing  elementary  column transformations form $A$.
 It can be verified that $\det(B)=0$. Therefore $\det(A)=\det(B)=0$.
\end{proof}
 Let \begin{equation}\label{matrixM}M=\left(\begin{array}{cc}
        a_2b_2+a_3b_3 &a_2b_3-a_3b_2+b_1 \\
    a_3b_2-a_2b_3-b_1& a_2b_2+a_3b_3\\
  \end{array}\right)=\left(\begin{array}{cc}
        -P_{ab} &\delta \\
   -\delta& -P_{ab}\\
  \end{array}\right).\end{equation} Since $P_{ab}\neq 0$, the subdeterminant
  $$m=:\det(M)=P_{ab}^2+\delta^2>0.$$
    By Proposition \ref{prop5.2}, this means that  $rank(A)=2$. We reformulate  Eqs.(\ref{1aeq3}) and (\ref{1aeq4}) as
   \begin{equation}Mz=v,\end{equation}
  where $$z=(x_2,x_3)^T,v=\left(
                 \begin{array}{c}
                   t_1+(b_2-a_3b_1)x_0+(a_2b_1+b_3)x_1 \\
                  t_2+(a_2b_1+b_3)x_0+(a_3b_1-b_2)x_1\\
                   \end{array}
                 \right).$$

  Let
$$k_1:=-P_{ab}(a_2b_1+b_3)-\delta(a_3b_1-b_2)=2b_2\delta-a_3I_b,$$
$$k_2:=-P_{ab}(b_2-a_3b_1)-\delta(a_2b_1+b_3)=-2b_3\delta-a_2I_b$$
and
\begin{equation*}\Delta_1=\frac{-P_{ab}t_1-\delta t_2}{m},\Delta_2=\frac{\delta t_1-P_{ab}t_2}{m}.\end{equation*}
Note that
$$ m=P_{ab}^2+\delta^2=b_1^2+b_2^2+b_3^2+2a_2b_1b_3-2a_3b_1b_2=2b_1\delta-I_b$$
and $$k_1^2+k_2^2=m^2.$$
   Because
  $$M^{-1}=\frac{1}{m}\left(\begin{array}{cc}
       -P_{ab} & -\delta\\
   \delta&-P_{ab}\\
  \end{array}\right)\mbox{  and } z=M^{-1}v,$$
    we have
\begin{eqnarray}x_2&=&\frac{-P_{ab}[t_1+(b_2-a_3b_1)x_0+(a_2b_1+b_3)x_1]
-\delta[t_2+(a_2b_1+b_3)x_0+(a_3b_1-b_2)x_1]}{m}\nonumber\\
&=&\frac{-P_{ab}(a_2b_1+b_3)-\delta(a_3b_1-b_2)}{m}x_1+\frac{-P_{ab}(b_2-a_3b_1)-\delta(a_2b_1+b_3)}{m}x_0
+\frac{-P_{ab}t_1-\delta t_2}{m}\nonumber\\
&=&\frac{k_1}{m}x_1+\frac{k_2}{m}x_0+\Delta_1\label{x2s5}\end{eqnarray}
and
\begin{eqnarray}x_3&=&\frac{\delta[t_1+(b_2-a_3b_1)x_0+(a_2b_1+b_3)x_1]-P_{ab}[t_2+(a_2b_1+b_3)x_0+(a_3b_1-b_2)x_1]}{m}\nonumber\\
&=&\frac{\delta(a_2b_1+b_3)-P_{ab}(a_3b_1-b_2)}{m}x_1
+\frac{-P_{ab}(a_2b_1+b_3)-\delta(a_3b_1-b_2)}{m}x_0+\frac{\delta t_1-P_{ab}t_2}{m}\nonumber\\
&=&-\frac{k_2}{m}x_1+\frac{k_1}{m}x_0+\Delta_2.\label{x3s5}\end{eqnarray}
Substituting the above two formulas  in Eq.(\ref{1aeq2}), we  have
\begin{eqnarray*}\Big(2x_0+\frac{b_3k_1+b_2k_2+2a_3k_1x_0+2a_2k_2x_0}{m}\Big)x_1+F=0,
\end{eqnarray*}
where \begin{eqnarray}\label{condthm5.1}F=\frac{2a_3k_2-2a_2k_1}{m}x_0^2
+\Big(\frac{b_3k_2-b_2k_1}{m}+2a_3\Delta_1-2a_2\Delta_2+b_1\Big)x_0+b_3\Delta_1-b_2\Delta_2+c_1.\end{eqnarray}
Note that $$2x_0+\frac{b_3k_1+b_2k_2+2a_3k_1x_0+2a_2k_2x_0}{m}=0.$$ By the solvability of $Ay=u$, we should have $F=0$.
We remark that the fact that the coefficient of $x_1$ is zero
 is guaranteed by $\det(A)=0$ and $F=0$ is just a restatement of $rank(A)=rank(A,u)=2$.

Substituting  $x_2$ and $x_3$ of  (\ref{x2s5}) and (\ref{x3s5}) in Eq.(\ref{1aeq1}), we obtain
$$Rx_1+L=0,$$
where \begin{eqnarray}R=\frac{2k_1\Delta_1-2k_2\Delta_2+b_2k_1-b_3k_2+2(a_2k_1-a_3k_2)x_0-mb_1}{m}\end{eqnarray} and
\begin{eqnarray}L&=&b_2\Delta_1+b_3\Delta_2+\Delta_1^2+\Delta_2^2+c_0+\frac{2(a_2k_2+a_3k_1+m)}{m}x_0^2\nonumber\\
&&+\frac{(2k_2\Delta_1+2k_1\Delta_2+b_2k_2+b_3k_1+2a_2\Delta_1m+2a_3\Delta_2m)}{m}x_0.\end{eqnarray}
 If $R=0$  we should have $L=0$ and in this case, $x_1$ is arbitrary.
 If $R\neq 0$ then $$x_1=\frac{-L}{R}.$$

Summarizing our reasoning process, we figure out the following  conditions.

 \begin{defi}\label{def5.1}
For the coefficients $a,b,c$ in Equation IV  such that $P_{ab}\neq 0,$
we set \begin{equation}\label{x0s5thm}x_0=\frac{-I_b}{4P_{ab}},\end{equation}
\begin{equation}\label{k1k2mthm}k_1=2b_2\delta-a_3I_b,k_2=-2b_3\delta-a_2I_b, m=2b_1\delta-I_b,\end{equation}
\begin{equation}\label{Delta}\Delta_1=\frac{-P_{ab}t_1-\delta t_2}{m},\Delta_2=\frac{\delta t_1-P_{ab}t_2}{m},\end{equation}
 \begin{eqnarray}\label{thmR}R=\frac{2k_1\Delta_1-2k_2\Delta_2+b_2k_1-b_3k_2+2(a_2k_1-a_3k_2)x_0-mb_1}{m},\end{eqnarray}
   \begin{eqnarray}L&=&b_2\Delta_1+b_3\Delta_2+\Delta_1^2+\Delta_2^2+c_0+\frac{2(a_2k_2+a_3k_1+m)}{m}x_0^2\nonumber\\
&&+\frac{(2k_2\Delta_1+2k_1\Delta_2+b_2k_2+b_3k_1+2a_2\Delta_1m+2a_3\Delta_2m)}{m}x_0,\label{thmL}\end{eqnarray}
and
\begin{eqnarray}\label{condthm5.2F}F=\frac{2a_3k_2-2a_2k_1}{m}x_0^2
+\Big(\frac{b_3k_2-b_2k_1}{m}+2a_3\Delta_1-2a_2\Delta_2+b_1\Big)x_0+b_3\Delta_1-b_2\Delta_2+c_1.\end{eqnarray}
We say  $(a,b,c)$   satisfies  {\bf Condition B} if the following two conditions hold:
\begin{itemize}
		\item [(1)]  $F=0;$
\item [(2)] If $R=0$ then $L=0$.
\end{itemize}
 \end{defi}

Summarizing the previous results, we obtain the following theorem.
\begin{thm}\label{thm5.1}
 Equation IV with $P_{ab}\neq 0$  has a  solution $x\in SZ$ if and only if Condition B holds.
 Let $x_0,k_1,k_2,m,\Delta_1,\Delta_2,R,L,F$ be given by Definition \ref{def5.1}.
If Condition B holds, then we have the following cases:
\begin{itemize}
	\item [(1)]
 If $R\neq 0$  then Equation IV has a solution:
 $$x=x_0-\frac{L}{R}\bi+x_2\bj+x_3\bk,$$
where $$x_2= -\frac{k_1L}{mR}+\frac{k_2}{m}x_0+\Delta_1$$
and
 $$x_3=\frac{k_2L}{mR}+\frac{k_1}{m}x_0+\Delta_2.$$

\item [(2)]  If $R=0$ then Equation IV has solutions:
$$x=x_0+x_1\bi+(\frac{k_1}{m}x_1+\frac{k_2}{m}x_0+\Delta_1)\bj+(-\frac{k_2}{m}x_1+\frac{k_1}{m}x_0+\Delta_2)\bk,
 \forall x_1\in \br.$$
\end{itemize}
\end{thm}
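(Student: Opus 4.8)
The plan is to exploit the special structure of $SZ$: for a solution $x\in SZ$ the condition $2x_0a+b\in Z(\bh_s)$ together with $I_a=0$ (which holds because $a_2^2+a_3^2=1$) pins down $x_0$ completely. Expanding $\langle 2x_0a+b,2x_0a+b\rangle=0$ gives $4x_0P_{ab}+I_b=0$, and since $P_{ab}\neq 0$ we obtain the single value $x_0=-I_b/(4P_{ab})$ of (\ref{x0s5thm}). Knowing $x_0$ is the whole point: it collapses the quadratic system (\ref{rsym2}) down to a problem for the three unknowns $x_1,x_2,x_3$ that is essentially linear. The remainder of the argument is to set up and solve that linear problem, and to read off from it exactly the consistency requirements that constitute \textbf{Condition B}.

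First I would substitute $x_0=-I_b/(4P_{ab})$ into the four real equations (\ref{ae1})--(\ref{ae4}). Equation (\ref{ae2}) becomes the linear equation (\ref{1aeq2}) once $x_0$ is treated as a constant, while the two combinations $a_2\cdot(\ref{ae1})+a_3\cdot(\ref{ae2})-(\ref{ae3})$ and $a_3\cdot(\ref{ae1})-a_2\cdot(\ref{ae2})-(\ref{ae4})$ eliminate the quadratic form $x_0^2-x_1^2+x_2^2+x_3^2$ and yield the linear equations (\ref{1aeq3}) and (\ref{1aeq4}). These three linear equations assemble into $Ay=u$ with $A$ as in (\ref{matrixA}) and $y=(x_1,x_2,x_3)^T$. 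By Proposition \ref{prop5.2} we have $\det A=0$, while the $2\times 2$ block $M$ of (\ref{matrixM}) has $\det M=m=P_{ab}^2+\delta^2>0$; hence $\mathrm{rank}(A)=2$. I would then solve the two equations governed by $M$ for $x_2,x_3$ as affine functions of $x_1$ via $z=M^{-1}v$, producing (\ref{x2s5}) and (\ref{x3s5}).

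With $x_2,x_3$ now affine in $x_1$, two substitutions finish the job. Feeding them back into the remaining linear equation (\ref{1aeq2}), the coefficient of $x_1$ vanishes identically (this is forced by $\det A=0$), so the equation degenerates to the scalar condition $F=0$ of (\ref{condthm5.2F}); this is nothing but the solvability statement $\mathrm{rank}(A)=\mathrm{rank}(A,u)$. Feeding them into the genuinely quadratic equation (\ref{1aeq1}), the $x_1^2$ term cancels because the coefficient of $x_1^2$ in $x_2^2+x_3^2$ equals $\frac{k_1^2+k_2^2}{m^2}=1$ thanks to the identity $k_1^2+k_2^2=m^2$, so that equation reduces to the linear relation $Rx_1+L=0$ with $R,L$ as in (\ref{thmR}) and (\ref{thmL}). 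This dichotomy is exactly the content of Condition B: consistency of the whole system requires $F=0$, and then either $R\neq 0$, giving the unique $x_1=-L/R$ of case (1), or $R=0$, which forces $L=0$ and leaves $x_1$ free, giving the one-parameter family of case (2). For the converse I would run this construction in reverse, noting that the passage from (\ref{ae1})--(\ref{ae4}) to (\ref{1aeq1})--(\ref{1aeq4}) is invertible, so that any $(x_0,x_1,x_2,x_3)$ produced under Condition B genuinely solves $ax^2+bx+c=0$ and lies in $SZ$ by the choice of $x_0$.

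I expect the main obstacle to be computational rather than conceptual: verifying the three algebraic cancellations on which everything hinges, namely that $\det A=0$, that the $x_1$-coefficient in the reduced form of (\ref{1aeq2}) is identically zero, and that $k_1^2+k_2^2=m^2$ so the $x_1^2$-terms disappear in (\ref{1aeq1}). Each of these is a polynomial identity in $a_2,a_3,b_1,b_2,b_3$ constrained by $a_2^2+a_3^2=1$, and carrying them out without error, while keeping track of the definitions of $\delta,t_1,t_2,k_1,k_2,m,\Delta_1,\Delta_2$ from Definition \ref{def5.1}, is the delicate part of the proof.
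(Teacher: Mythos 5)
Your proposal is correct and follows essentially the same route as the paper: fixing $x_0=-I_b/(4P_{ab})$ from $\langle 2x_0a+b,2x_0a+b\rangle=0$, reducing (\ref{ae1})--(\ref{ae4}) to the linear system $Ay=u$ with $\det A=0$ and the invertible block $M$, expressing $x_2,x_3$ affinely in $x_1$, and reading off $F=0$ and $Rx_1+L=0$ (using $k_1^2+k_2^2=m^2$ to kill the $x_1^2$ terms) as exactly the content of Condition B. The two cases $R\neq 0$ and $R=0$ then match the paper's conclusion.
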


\begin{exam}\label{exam5.1}
	Consider the quadratic equation $(1+\bj)x^2 +(\bi+2\bj+\bk)x-\frac{1}{4}+\frac{5}{2}\bi+\frac{3}{4}\bi+\frac{5}{2}\bk=
0$.
 That is, $a=1+\bj,b=\bi+2\bj+\bk$ and $c=-\frac{1}{4}+\frac{5}{2}\bi+\frac{3}{4}\bi+\frac{5}{2}\bk$. In this case
$$x_0=-\frac{1}{2},k_1=8,k_2=0,\Delta_1=-1,\Delta_2=\frac{3}{2},m=8,R=-2,L=2, F=0.$$
Therefore $(a,b,c)$ satisfies  {\it Condition B} and $x_1=-\frac{L}{R}=1, x_2=0,x_3=1$.
 Thus $$x=-\frac{1}{2}+\bi+\bk$$ is a solution of the given quadratic equation.
\end{exam}

\begin{exam}\label{exam5.2}
	Consider the quadratic equation $(1+\bj)x^2 +(\bi+\bj)x-1+\bi= 0$.
That is, $a=1+\bj,b=\bi+\bj$ and $c=-1+\bi$. In this case $$x_0=0,k_1=2,k_2=0,\Delta_1=0,\Delta_2=1,m=2,R=L=0,F=0.$$
Therefore $(a,b,c)$ satisfies  {\it Condition B}. In this case $x_1$ is arbitrary, $x_2=x_1,x_3=1$.
Thus $$x=x_1\bi+x_1\bj+\bk,\forall x_1\in \br$$  are solutions of   the given quadratic equation.
\end{exam}

\subsubsection{$P_{ab}=0$}
In this subsection, we will find the necessary and sufficient conditions of  Equation IV with $P_{ab}=0$
 having a solution $x\in SZ$.

Suppose that Equation IV with $P_{ab}=0$  has a solution $x\in SZ$.
By Proposition \ref{prop5.1}, under the condition  $P_{ab}=0$ and $I_a=0$, we have
      \begin{eqnarray}
 (b_2-a_3b_1)x_0+(a_2b_1+b_3)x_1+(a_3b_2-a_2b_3-b_1)x_3+t_1=0,\label{2new3}\\
 (a_2b_1+b_3)x_0+(a_3b_1-b_2)x_1+(a_2b_3-a_3b_2+b_1)x_2+t_2=0.\label{2new4}
\end{eqnarray}
 Since
$$\left\langle 2x_0a+b,2x_0a+b \right\rangle=4x_0P_{ab}+I_b=0.$$  By our assumption $P_{ab}=0$, we must have $I_b=0$.
 By $P_{ab}=I_b=0$, we have $$b_1^2-(a_3b_2-a_2b_3)^2=0.$$
 Thus we have $\delta=2b_1$ or $\delta=0$.
  We divide our consider into two subcases: $$\delta=2b_1\mbox{ and }\delta=0.$$

   We begin with the case $\delta=2b_1$, that is $b_1=-a_3b_2+a_2b_3$.

If $b_1=-a_3b_2+a_2b_3$, then by $P_{ab}=0$ and $I_a=0$ we have $b_2-a_3b_1=2b_2,a_2b_1+b_3=2b_3.$
 Thus \begin{equation}\label{b3b2c1}b_3=a_2b_1,b_2=-a_3b_1.\end{equation} By our assumption $b\neq 0$, we have $b_1\neq 0$.
  Hence Eqs. (\ref{2new3}) and (\ref{2new4}) become
     \begin{eqnarray*}
  2b_2x_0+2b_3x_1-2b_1x_3+t_1=0,\\
 2b_3x_0-2b_2x_1+2b_1x_2+t_2=0.
\end{eqnarray*}
  From the above and Eq.(\ref{b3b2c1}), we get
      \begin{eqnarray}
   x_2&=&-a_2x_0-a_3x_1-\frac{t_2}{2b_1},\label{s5newx2}\\
  x_3&=&-a_3x_0+a_2x_1+\frac{t_1}{2b_1}.\label{s5newx3}
\end{eqnarray}
Substituting   the above two formulas of $x_2$ and $x_3$ in Eq.(\ref{ae2}),
  we obtain
  \begin{eqnarray*}
-\frac{a_2t_1+a_3t_2}{b_1}x_0-\frac{a_2t_2-a_3t_1}{2}+c_1=0.
   \end{eqnarray*}
  If $a_2t_1+a_3t_2=0$ then we must have  $-\frac{a_2t_2-a_3t_1}{2}+c_1=0$ and in this case  $x_0$ is arbitrary.
If $a_2t_1+a_3t_2\neq 0$ then $$x_0=\frac{(a_3t_1-a_2t_2+2c_1)b_1}{2(a_2t_1+a_3t_2)}.$$

  Substituting $x_2$ and $x_3$ of  (\ref{s5newx2}) and (\ref{s5newx3}) in Eq.(\ref{ae1}),
    we obtain
\begin{eqnarray*}
\frac{a_2t_1+a_3t_2}{b_1}x_1+\frac{t_1^2+t_2^2}{4b_1^2}+\frac{a_2t_1+a_3t_2}{2}+c_0=0.
   \end{eqnarray*}
If $a_2t_1+a_3t_2=0$ then we need $\frac{t_1^2+t_2^2}{4b_1^2}+c_0=0$ and $x_1$ is arbitrary.
If $a_2t_1+a_3t_2\neq 0$ then $$x_1=-\frac{t_1^2+t_2^2+2b_1^2(a_2t_1+a_3t_2)+4b_1^2c_0}{4b_1(a_2t_1+a_3t_2)}.$$

By the above reasoning process, we figure out the following condition.

 \begin{defi}\label{def5.2}
  For the coefficients $a,b,c$ in Equation IV  such that $P_{ab}=0$ and $I_b=0$,
     $(a,b,c)$   satisfies  {\bf Condition C} if the following two conditions hold:
\begin{itemize}
		\item [(1)]  $\delta=2b_1;$
\item [(2)]  If $a_2t_1+a_3t_2=0$ then  $a_2t_2-a_3t_1-2c_1=0$ and  $t_1^2+t_2^2+4b_1^2c_0=0$.
\end{itemize}
 \end{defi}

Summarizing the previous results, we obtain the following theorem.
\begin{thm}\label{thm5.2}
 Equation IV with $P_{ab}=0$ and $\delta=2b_1$ has a  solution $x\in SZ$  if and only if Condition C holds.
If Condition C holds, then we have the following cases.
	\begin{itemize} \item [(1)] If  $a_2t_1+a_3t_2\neq 0$ then   Equation IV  has a solution
 $$x=x_0+x_1\bi+x_2\bj+x_3\bk,$$
where
\begin{eqnarray*}\label{s5x0}x_0=\frac{(a_3t_1-a_2t_2+2c_1)b_1}{2(a_2t_1+a_3t_2)}
   \end{eqnarray*}
 \begin{eqnarray*}\label{s5x1}x_1=-\frac{t_1^2+t_2^2+2b_1^2(a_2t_1+a_3t_2)+4b_1^2c_0}{4b_1(a_2t_1+a_3t_2)},
   \end{eqnarray*}
and
  \begin{eqnarray*}
  x_2&=&-a_2x_0-a_3x_1-\frac{t_2}{2b_1},\\
  x_3&=&-a_3x_0+a_2x_1+\frac{t_1}{2b_1}.
\end{eqnarray*}
 \item [(2)]If  $a_2t_1+a_3t_2=0$
 then   Equation IV  has solutions
 $$x=x_0+x_1\bi+x_2\bj+x_3\bk,\forall x_0,x_1\in \br,$$
where
  \begin{eqnarray*}
   x_2&=&-a_2x_0-a_3x_1-\frac{t_2}{2b_1},\\
  x_3&=&-a_3x_0+a_2x_1+\frac{t_1}{2b_1}.
\end{eqnarray*}
\end{itemize}

\end{thm}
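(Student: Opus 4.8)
The plan is to translate the defining relation of $SZ$ and the hypotheses into clean scalar identities, then reduce the full real system to two genuinely linear constraints. Since $a=1+a_2\bj+a_3\bk\in Z(\bh_s)$ we have $I_a=0$, i.e. $a_2^2+a_3^2=1$, and $P_{ab}=-(a_2b_2+a_3b_3)=0$. For a solution $x\in SZ$ the identity $\langle 2x_0a+b,2x_0a+b\rangle=4x_0^2I_a+4x_0P_{ab}+I_b$ collapses, under $I_a=P_{ab}=0$, to $I_b=0$; combined with $P_{ab}=0$ this yields $(a_2b_3-a_3b_2)^2=b_1^2$, so $\delta=a_2b_3-a_3b_2+b_1\in\{0,2b_1\}$. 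Under the standing hypothesis $\delta=2b_1$ we get $b_1=a_2b_3-a_3b_2$, and eliminating with $a_2^2+a_3^2=1$ and $a_2b_2+a_3b_3=0$ produces the simplifications $b_3=a_2b_1,\ b_2=-a_3b_1$ of (\ref{b3b2c1}); since $b\neq0$ these force $b_1\neq0$, which is what makes the subsequent division legitimate.

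Next I would replace the full real system (\ref{ae1})--(\ref{ae4}) by the equivalent set $\{(\ref{ae1}),(\ref{ae2}),(\ref{new3}),(\ref{new4})\}$, where (\ref{new3}) and (\ref{new4}) are the combinations supplied by Proposition \ref{prop5.1}; this is an equivalence because those combinations are invertible (one recovers (\ref{ae3}), (\ref{ae4}) from them together with (\ref{ae1}), (\ref{ae2})). Feeding the simplified coefficients into (\ref{new3}), (\ref{new4}) turns them into a $2\times2$ system that, using $b_1\neq0$, is solved for $x_2,x_3$ as the affine functions (\ref{s5newx2}), (\ref{s5newx3}) of $x_0,x_1$. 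Substituting these back into (\ref{ae2}) and (\ref{ae1}) is the crux, and the step I expect to demand the most care: the point is that \emph{every} quadratic and cross term must cancel. Exploiting $a_2^2+a_3^2=1$, $a_2b_2+a_3b_3=0$, and $b_3=a_2b_1,\ b_2=-a_3b_1$ systematically, equation (\ref{ae2}) collapses to the single linear relation $-\tfrac{a_2t_1+a_3t_2}{b_1}x_0-\tfrac{a_2t_2-a_3t_1}{2}+c_1=0$, while (\ref{ae1}) collapses to $\tfrac{a_2t_1+a_3t_2}{b_1}x_1+\tfrac{t_1^2+t_2^2}{4b_1^2}+\tfrac{a_2t_1+a_3t_2}{2}+c_0=0$. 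The common factor $a_2t_1+a_3t_2$ governs the whole dichotomy.

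Finally I would split on $a_2t_1+a_3t_2$. If $a_2t_1+a_3t_2\neq0$, the two linear relations determine $x_0$ and $x_1$ uniquely, which together with (\ref{s5newx2}), (\ref{s5newx3}) yields exactly the single solution of case (1); here Condition C(2) is vacuous, so Condition C holds automatically. If $a_2t_1+a_3t_2=0$, the coefficients of $x_0$ and $x_1$ both vanish, so the two relations are consistent if and only if their constant terms vanish, i.e. $a_2t_2-a_3t_1-2c_1=0$ and $t_1^2+t_2^2+4b_1^2c_0=0$ --- precisely Condition C(2) --- in which case $x_0,x_1$ are free and (\ref{s5newx2}), (\ref{s5newx3}) give the two-parameter family of case (2). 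Because every implication is reversible --- the passage between the full system and the reduced set is an equivalence, and inserting the formulas back reproduces (\ref{ae1})--(\ref{ae4}) --- this establishes necessity and sufficiency of Condition C and verifies the stated formulas simultaneously. The only real obstacle is the bookkeeping in the cancellations of the middle step, where a single sign slip would destroy the clean linear reductions; I would guard against it by invoking the two structural identities $a_2^2+a_3^2=1$ and $a_2b_2+a_3b_3=0$ at each elimination.
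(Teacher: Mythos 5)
Your proposal is correct and follows essentially the same route as the paper: derive $I_b=0$ and $b_2=-a_3b_1$, $b_3=a_2b_1$ from the $SZ$ condition and $\delta=2b_1$, use the linear relations of Proposition \ref{prop5.1} to express $x_2,x_3$ affinely in $x_0,x_1$, substitute into (\ref{ae2}) and (\ref{ae1}) to obtain the two linear relations governed by $a_2t_1+a_3t_2$, and split on whether that quantity vanishes. Your explicit remark that the reduced system $\{(\ref{ae1}),(\ref{ae2}),(\ref{new3}),(\ref{new4})\}$ is equivalent to the full system (\ref{ae1})--(\ref{ae4}) is a small point the paper leaves implicit, but otherwise the arguments coincide.
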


\begin{exam}\label{examthm5.3}
	Consider the quadratic equation $(1+\bj)x^2 +(\bi+\bk)x+1-\bi=0$.
That is, $a=1+\bj,b=\bi+\bk$ and $c=1-\bi$. In this case $t_1=t_2=-1$ and $a_2t_1+a_3t_2=-1$.
The equation $ax^2+bx+c=0$ has a solution
$x=\frac{1}{2}+\bi+\frac{1}{2}\bk.$
\end{exam}

\begin{exam}\label{examthm5.4}
	Consider the quadratic equation $(1+\bj)x^2 +(\bi+\bk)x-1+\bi-\bj+\bk=0$.
That is, $a=1+\bj,b=\bi+\bk$ and $c=-1+\bi-\bj+\bk$.
In this case $t_1=0,t_2=2$ and $a_2t_1+a_3t_2=0$.  The equation $ax^2+bx+c=0$ has  solutions
$x=x_0+x_1\bi-(1+x_0)\bj+x_1\bk,\forall x_0,x_1\in \br.$
\end{exam}

We now consider the  second case $\delta=0$, that is, $b_1=a_3b_2-a_2b_3$.  If $b_1=a_3b_2-a_2b_3$ then by $P_{ab}=0$ and
$I_a=0$
we have $$b_2-a_3b_1=a_2(a_2b_2+a_3b_3)=0,\,a_2b_1+b_3=a_3(a_2b_2+a_3b_3)=0.$$
So we have
\begin{equation}\label{b3b2c2}b_2=a_3b_1,b_3=-a_2b_1.\end{equation}
From the above formulas, Eqs.(\ref{2new3}) and (\ref{2new4}) imply that
\begin{equation}\label{ac1}c_2-a_2c_0-a_3c_1=0,\,c_3+a_2c_1-a_3c_0=0.\end{equation}
By $I_a=0$ and the above two conditions, we have
\begin{equation}\label{ac2}P_{ac}=\left\langle a,c\right\rangle=c_0-a_2c_2-a_3c_3=0.\end{equation}
From this, we get
\begin{equation}\label{ac3}c_1=a_3c_2-a_2c_3.\end{equation}
Eqs. (\ref{ac1})-(\ref{ac3}) is equivalent to the condition \begin{equation}\label{acend}ac=2c.\end{equation}
Under the condition $P_{ab}=I_a=I_b=0,b_2=a_3b_1,b_3=-a_2b_1$ and $ac=2c$, we have
\begin{center}Eq.(\ref{ae3})$=$ Eq.(\ref{ae1})$\times a_2+$Eq.(\ref{ae2})$\times a_3$ and
Eq.(\ref{ae4})$=$ Eq.(\ref{ae1})$\times a_3-$Eq.(\ref{ae2})$\times a_2$.\end{center}
Hence in this case Equation IV  only has two independent equalities Eqs.(\ref{ae1}) and (\ref{ae2}), which can be reformulated
as
\begin{eqnarray}
x_0^2+2(a_2x_2+a_3x_3)x_0-x_1^2+x_2^2+x_3^2-b_1x_1+a_3b_1x_2-a_2b_1x_3+c_0=0,\label{ab0eq1}\\
x_0(2x_1-2a_2x_3+2a_3x_2+b_1)=b_1(a_2x_2+a_3x_3)-c_1.\label{ab0eq2}
\end{eqnarray}
  These are underdetermined system of equations.

Note that $b_1\neq 0$.
By  Eq.(\ref{ab0eq2}), if $x_0=0$ then  $$a_2x_2+a_3x_3=\frac{c_1}{b_1}.$$
we treated the cases $a_2=0$ and $a_2\neq 0$, respectively.

If $a_2=0$ then $a_3\neq 0$ and therefore $$x_3=\frac{c_1}{a_3b_1}.$$ Note that $a_3^2=1$.
Substituting $x_0=0$ and $x_3=\frac{c_1}{a_3b_1}$ in Eq.(\ref{ab0eq1}), we obtain
$$x_2^2+a_3b_1x_2+\frac{c_1^2}{b_1^2}+c_0-x_1^2-b_1x_1=0.$$
So we have a solution $$x=x_1\bi+x_2\bj+\frac{c_1}{a_3b_1}\bk,$$
where $$x_2=\frac{-a_3b_1\pm \sqrt{b_1^2-4(\frac{c_1^2}{b_1^2}+c_0-x_1^2-b_1x_1)}}{2}$$
and $x_1\in \br$ satisfies
$$\frac{c_1^2}{b_1^2}+c_0-\frac{b_1^2}{4}-x_1^2-b_1x_1\leq 0.$$

If $a_2\neq 0$ then \begin{equation}\label{addx2}x_2=\frac{c_1}{a_2b_1}-\frac{a_3}{a_2}x_3.\end{equation}
Substituting $x_0=0$ and the above formula 
in Eq.(\ref{ab0eq1}), we obtain
$$x_1^2+b_1x_1+t=0,$$
where $$t=-\frac{1}{a_2^2}x_3^2+(\frac{2a_3c_1+a_2b_1^2}{a_2^2b_1})x_3-(c_0+\frac{a_3c_1}{a_2}+\frac{c_1^2}{a_2^2b_1^2}).$$
Hence $x_1$ can be expressed by $x_3$ as
\begin{equation}\label{addx1}x_1=\frac{-b_1\pm \sqrt{b_1^2-4t}}{2}\end{equation}
and $x_3\in \br$ satisfies
\begin{equation}\label{addb1}b_1^2-4t=\frac{4}{a_2^2}[x_3^2-(\frac{2a_3c_1+a_2b_1^2}{b_1})x_3+\frac{4(a_2^2b_1^2c_0+a_2b_1^2a_3c_1+c_1^2)+b_1^4a_2^2}{4b_1^2}]
\geq 0.\end{equation}
So we have  solutions $$x=x_1\bi+x_2\bj+x_3\bk,$$
where $x_1,x_2$ are given by (\ref{addx1}) and (\ref{addx2}), and $x_3\in \br$ satisfies (\ref{addb1}).

If $x_0\neq 0$ then from  Eq.(\ref{ab0eq2}) we have
 $$2x_1-2a_2x_3+2a_3x_2+b_1=\frac{a_3b_1x_3+a_2b_1x_2-c_1}{x_0}.$$
From this, we get $$x_1=(\frac{a_2b_1}{2x_0}-a_3)x_2+(\frac{a_3b_1}{2x_0}+a_2)x_3-\frac{c_1+b_1x_0}{2x_0}.$$
Substituting the above formula in  Eq.(\ref{ab0eq1}) and rearranging the equation, we obtain
\begin{eqnarray*}
&&x_0^4+2(a_2x_2+a_3x_3)x_0^3+[(a_2x_2+a_3x_3)^2+b_1(a_3x_2-a_2x_3)+c_0+\frac{b_1^2}{4}]x_0^2\\
&&+[a_2a_3b_1(x_2^2-x_3^2)+b_1(a_3^2-a_2^2)x_2x_3+c_1(a_2x_3-a_3x_2)]x_0-\frac{[b_1(a_2x_2+a_3x_3)-c_1]^2}{4}=0.
\end{eqnarray*}
Let \begin{eqnarray*}
f(z)&=&z^4+2(a_2x_2+a_3x_3)z^3+[(a_2x_2+a_3x_3)^2+b_1(a_3x_2-a_2x_3)+c_0+\frac{b_1^2}{4}]z^2\\
&&+[a_2a_3b_1(x_2^2-x_3^2)+b_1(a_3^2-a_2^2)x_2x_3+c_1(a_2x_3-a_3x_2)]z-\frac{[b_1(a_2x_2+a_3x_3)-c_1]^2}{4}=0.
\end{eqnarray*}
Then $$f(0)=-\frac{[b_1(a_2x_2+a_3x_3)-c_1]^2}{4}\leq 0,\lim_{z\to +\infty} f(z)=+\infty,\lim_{z\to -\infty} f(z)=+\infty.$$

If $a_2x_2+a_3x_3\neq \frac{c_1}{b_1}$ then  $f(0)<0$,  and $f(z)=0$  has at least two real  solutions  $z_1\in (-\infty,0)$
and $z_2\in (0,\infty)$.
 Let $T\in \br$ be a solution of $f(z)=0$ with $a_2x_2+a_3x_3\neq \frac{c_1}{b_1}$.
 Then Equation IV has a solution
 $$x=T+x_1\bi+x_2\bj+x_3\bk,$$
  where
$$x_1=(\frac{a_2b_1}{2T}-a_3)x_2+(\frac{a_3b_1}{2T}+a_2)x_3-\frac{c_1+b_1T}{2T}.$$

If \begin{equation}\label{ab0eqx23}a_2x_2+a_3x_3=\frac{c_1}{b_1},\end{equation} then  by (\ref{ab0eq2}) and our assumption
$x_0\neq 0$
 we have
\begin{eqnarray}
               2x_1-2a_2x_3+2a_3x_2+b_1=0.\label{ab0eq3}
  \end{eqnarray}
By (\ref{ab0eqx23}) and (\ref{ab0eq3}), we obtain that
\begin{eqnarray*}
 x_2=\frac{a_2c_1}{b_1}-\frac{a_3b_1}{2}-a_3x_1,\label{ab0eq4}\\
 x_3=\frac{a_3c_1}{b_1}+\frac{a_2b_1}{2}+a_2x_1.\label{ab0eq5}
  \end{eqnarray*}
Substituting the above formulas in (\ref{ab0eq1}), we obtain
\begin{eqnarray}
             x_0^2+\frac{2c_1}{b_1}x_0-b_1x_1+\frac{c_1^2}{b_1^2}-\frac{b_1^2}{4}+c_0=0.
  \end{eqnarray}
Hence
\begin{eqnarray*}
            x_1= \frac{1}{b_1}x_0^2+\frac{2c_1}{b_1^2}x_0+\frac{c_1^2}{b_1^3}-\frac{b_1}{4}+\frac{c_0}{b_1}.\label{s5newx1}
  \end{eqnarray*}
Form the above description,  Equation IV has  solutions   $$x=x_0+x_1\bi+x_2\bj+x_3\bk,\forall x_0\neq 0,$$
where $x_1,x_2,x_3$ are expressed  by formulas containing $x_0$ as above.

Summarizing the previous results, we obtain the following theorem.

\begin{thm}\label{thm5.3}
 Equation IV with $P_{ab}=0$ and $\delta=0$ has a  solution $x\in SZ$  if and only if
 $ac=2c$.
If  Equation IV is solvable then we have the following cases:
\begin{itemize}
 \item [(1)] Case $x_0=0$:
\begin{itemize}
  \item [(1.1)] if  $a_2=0$ then   Equation IV has  solutions:$$x=x_1\bi+x_2\bj+\frac{c_1}{a_3b_1}\bk,$$
      where $$x_2=\frac{-a_3b_1\pm \sqrt{b_1^2-4(\frac{c_1^2}{b_1^2}+c_0-x_1^2-b_1x_1)}}{2}$$
      and $x_1$ is real numbers satisfies
      $$x_1^2+b_1x_1+\frac{b_1^2}{4}-\frac{c_1^2}{b_1^2}-c_0\geq 0.$$

  \item [(1.2)] if   $a_2\neq 0$ then   Equation IV has  solutions:$$x=x_1\bi+x_2\bj+x_3\bk,$$
      where $x_3\in \br$  satisfies $$w=x_3^2-(\frac{2a_3c_1+a_2b_1^2}{b_1})x_3
      +\frac{4(a_2^2b_1^2c_0+a_2b_1^2a_3c_1+c_1^2)+b_1^4a_2^2}{4b_1^2}\ge 0$$
      and
      $$x_1=\frac{-b_1}{2}\pm \frac{\sqrt{w}}{a_2},$$
      $$x_2=\frac{c_1}{a_2b_1}-\frac{a_3}{a_2}x_3.$$
     \end{itemize}
\item [(2)] Case $x_0\neq 0$:
\begin{itemize}
  \item [(2.1)] $a_2x_2+a_3x_3\neq \frac{c_1}{b_1}$:
  Equation IV has  solutions:
             $$x=T+x_1\bi+x_2\bj+x_3\bk,$$  where
            $T$ be a real solution  of the following equation:  \begin{eqnarray*}
 z^4+2(a_2x_2+a_3x_3)z^3+[(a_2x_2+a_3x_3)^2+b_1(a_3x_2-a_2x_3)+c_0+\frac{b_1^2}{4}]z^2\\
 +[a_2a_3b_1(x_2^2-x_3^2)+b_1(a_3^2-a_2^2)x_2x_3+c_1(a_2x_3-a_3x_2)]z-\frac{[b_1(a_2x_2+a_3x_3)-c_1]^2}{4}=0
   \end{eqnarray*}
   and
$$x_1=(\frac{a_2b_1}{2T}-a_3)x_2+(\frac{a_3b_1}{2T}+a_2)x_3-\frac{c_1+b_1T}{2T}.$$

\item [(2.2)] $a_2x_2+a_3x_3=\frac{c_1}{b_1}$:
  Equation IV has  solutions:
             $$x=x_0+x_1\bi+x_2\bj+x_3\bk,\forall x_0\neq 0,$$  where
     \begin{eqnarray*}
            x_1&=& \frac{1}{b_1}x_0^2+\frac{2c_1}{b_1^2}x_0+\frac{c_1^2}{b_1^3}-\frac{b_1}{4}+\frac{c_0}{b_1},\\
  x_2&=&\frac{a_2c_1}{b_1}-\frac{a_3b_1}{2}-a_3x_1,\\
 x_3&=&\frac{a_3c_1}{b_1}+\frac{a_2b_1}{2}+a_2x_1.
  \end{eqnarray*}
  \end{itemize}
  \end{itemize}
\end{thm}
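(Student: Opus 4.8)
The plan is to prove the stated equivalence in both directions and then read the solution formulas off an explicit reduction of the real system. For the forward direction I would start from the membership $x\in SZ$, i.e.\ $2x_0a+b\in Z(\bh_s)$. Expanding $\left\langle 2x_0a+b,2x_0a+b\right\rangle=4x_0^2I_a+4x_0P_{ab}+I_b$ and using $I_a=0$ together with the standing hypothesis $P_{ab}=0$ forces $I_b=0$. Writing $P_{ab}=0$ and $I_b=0$ in coordinates gives $a_2b_2+a_3b_3=0$ and $b_1^2=b_2^2+b_3^2$; combined with $a_2^2+a_3^2=1$ this yields $(a_3b_2-a_2b_3)^2=b_1^2$, so $\delta\in\{0,2b_1\}$, and the case at hand is $\delta=0$, i.e.\ $b_1=a_3b_2-a_2b_3$. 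A short computation then gives the coordinate relations $b_2=a_3b_1$, $b_3=-a_2b_1$ and, crucially, that the coefficients $b_2-a_3b_1$, $a_2b_1+b_3$, $a_3b_2-a_2b_3-b_1$ and $a_3b_1-b_2$ appearing in Proposition~\ref{prop5.1} all vanish. Feeding this into the two linear relations of Proposition~\ref{prop5.1} collapses them to $t_1=0$ and $t_2=0$; unwinding the definitions of $t_1,t_2$ and invoking $a_2^2+a_3^2=1$ produces $P_{ac}=0$ and $c_1=a_3c_2-a_2c_3$, and these together are exactly the scalar content of $ac=2c$. This establishes necessity.

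For sufficiency I would assume $ac=2c$ and verify that the four scalar equations (\ref{ae1})--(\ref{ae4}) are no longer independent: under $b_2=a_3b_1$, $b_3=-a_2b_1$ and $ac=2c$ one checks the identities $(\ref{ae3})=a_2(\ref{ae1})+a_3(\ref{ae2})$ and $(\ref{ae4})=a_3(\ref{ae1})-a_2(\ref{ae2})$, so the system reduces to (\ref{ab0eq1}) and (\ref{ab0eq2}) alone. This underdetermined pair I would solve by splitting on $x_0$. When $x_0=0$, equation (\ref{ab0eq2}) gives the constraint $a_2x_2+a_3x_3=c_1/b_1$ (legitimate since $b_1\neq0$), and I would treat $a_2=0$ and $a_2\neq0$ separately: in each subcase the constraint eliminates one of $x_2,x_3$, and substitution into (\ref{ab0eq1}) leaves a single real quadratic whose discriminant supplies the stated inequalities (cases (1.1) and (1.2)). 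When $x_0\neq0$, I would solve (\ref{ab0eq2}) for $x_1$ in terms of $x_0,x_2,x_3$ and substitute into (\ref{ab0eq1}); if $a_2x_2+a_3x_3\neq c_1/b_1$ this yields a quartic $f(z)=0$ in $z=x_0$ with $f(0)<0$ while $\lim_{z\to\pm\infty}f(z)=+\infty$, guaranteeing a real root and giving case (2.1), whereas the degenerate equality $a_2x_2+a_3x_3=c_1/b_1$ forces the extra relation $2x_1-2a_2x_3+2a_3x_2+b_1=0$ and hence the explicit one-parameter family of case (2.2).

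The step I expect to be the main obstacle is the collapse of the $4$-equation real system to the single pair (\ref{ab0eq1})--(\ref{ab0eq2}): verifying the two linear-combination identities requires using all three normalizations $a_2^2+a_3^2=1$, $b_2=a_3b_1$, $b_3=-a_2b_1$ together with the full force of $ac=2c$, and it is easy to drop a term. A secondary subtlety lies in the $x_0\neq0$, $a_2x_2+a_3x_3\neq c_1/b_1$ branch, where one must confirm that the quartic genuinely satisfies $f(0)<0$ (not merely $f(0)\le0$) so that a nonzero real root exists; this is precisely what the hypothesis $a_2x_2+a_3x_3\neq c_1/b_1$ secures.
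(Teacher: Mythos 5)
Your proposal is correct and follows essentially the same route as the paper: necessity via the vanishing of the coefficient matrix in Proposition \ref{prop5.1} forcing $t_1=t_2=0$ (hence $ac=2c$), sufficiency via the collapse of (\ref{ae1})--(\ref{ae4}) to the pair (\ref{ab0eq1})--(\ref{ab0eq2}) using $b_2=a_3b_1$, $b_3=-a_2b_1$, and then the same case split on $x_0=0$ versus $x_0\neq 0$ with the same subcases. The subtlety you flag about $f(0)<0$ versus $f(0)\le 0$ in branch (2.1) is exactly the point the paper handles with the hypothesis $a_2x_2+a_3x_3\neq c_1/b_1$.
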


\begin{exam}\label{exam5.5}
	Consider the quadratic equation \begin{equation}\label{exam5.5eq}(1+\bk)x^2 +(\bi+\bj)x+1+2\bi+2\bj+\bk= 0.\end{equation}
That is, $a=1+\bk,b=\bi+\bj$ and $c=1+2\bi+2\bj+\bk$.  Then we have the following cases:
\begin{itemize}
  \item[(1.1)]
  Eq.(\ref{exam5.5eq}) has  the following solutions
  $$x=x_1\bi-\Big(\frac{1}{2}\pm \sqrt{x_1^2+x_1-\frac{19}{4}}\Big)\bj+2\bk,$$
  where $x_1$ is arbitrary but satisfies $x_1^2+x_1-\frac{19}{4}\geq 0$.
  \item[(2.1)]  Eq.(\ref{exam5.5eq}) has  the following solutions

            $$x=T+x_1\bi+x_2\bj+x_3\bk,\forall x_3\neq 2, x_2 \in \br$$  where
            $T$ be a real solution  of the following equation:  \begin{eqnarray}\label{examp5.5x0}
 z^4+2x_3z^3+(x_3^2+x_2+\frac{5}{4})z^2+(x_2x_3-2x_2)z-\frac{(x_3-2)^2}{4}=0
   \end{eqnarray}
   and
$$x_1=-x_2+\frac{1}{2T}x_3-\frac{2+T}{2T}.$$
For example, if we take $x_2=x_3=1$, then Eq. (\ref{examp5.5x0}) has real solution $T_1=0.3914$ and $T_2=-0.1675$.
So we have solutions
$$x=0.3914-2.7773\bi+\bj+\bk,\mbox{  and  } x=-0.1675+1.4857\bi+\bj+\bk.$$
\item[(2.2)]  When $x_3=2$,  Eq.(\ref{exam5.5eq}) has  the following solutions
             $$x=x_0+x_1\bi-(x_1+\frac{1}{2})\bj+2\bk,\forall x_0\neq 0,$$  where
     \begin{eqnarray*}
            x_1= x_0^2+4x_0+\frac{19}{4}.
  \end{eqnarray*}
  \end{itemize}
\end{exam}

\begin{exam}\label{exam5.6}
	Consider the quadratic equation \begin{equation}\label{exam5.6eq}(1+\bj)x^2 +(-\bi+\bj)x-1+\bi-\bj-\bk= 0.\end{equation}
That is, $a=1+\bj,b=-\bi+\bk$ and $c=-1+\bi-\bj-\bk$.  Then we have the following cases.
\begin{itemize}
  \item[(1.2)] Eq.(\ref{exam5.6eq}) has  the following solutions
  $$x=(1+x_3)\bi-\bj+x_3\bk\mbox{ and  } x=-x_3\bi-\bj+x_3\bk,  \forall x_3\in \br.$$
  \item[(2.1)] Eq.(\ref{exam5.6eq}) has  the following solutions
            $$x=T+x_1\bi+x_2\bj+x_3\bk,\forall x_2\neq -1,x_3\in \br$$  where
            $T$ be a real solution  of the following equation:  \begin{eqnarray}\label{examp5.6x0}
 z^4+2x_2z^3+(x_2^2+x_3-\frac{3}{4})z^2+(x_2x_3+x_3)z-\frac{(x_2+1)^2}{4}=0
   \end{eqnarray}
   and
$$x_1=-\frac{1}{2T}x_2+x_3+\frac{T-1}{2T}.$$
For example, if we take $x_2=x_3=1$, then Eq. (\ref{examp5.6x0}) has real solution $x_0=-2$ and $x_0=0.362$.
So we have solutions
$$x=-2+2\bi+\bj+\bk,\mbox{  and  } x=0.3620-1.2621\bi+\bj+\bk.$$
\item[(2.2)]  When $x_2=-1$, Eq.(\ref{exam5.6eq}) has  the following solutions
             $$x=x_0+x_1\bi-\bj+(x_1-\frac{1}{2})\bk,\forall x_0\neq 0,$$  where
     \begin{eqnarray*}
            x_1= -x_0^2+2x_0+\frac{1}{4}.
  \end{eqnarray*}
 \end{itemize}
\end{exam}

\subsection{The solutions of form $2x_0a+b\in \bh_s-Z(\bh_s)$}

In this subsection we consider  Equation IV  for $SI$.

 Since $a=1+a_2\bj+a_
3\bk\in Z(\bh_s)$ and $I_a=0$, the real nonlinear system (\ref{rsym1}) is simplified to
\begin{eqnarray}
 N(2TP_{ab}+I_b+2P_{ac})-I_c=0,\label{1ieq1}\\
2P_{ab}T^2+(2P_{ac}+I_b)T-2NP_{ab}+2P_{bc}=0.\label{1ieq2}
\end{eqnarray}

We treat the case $P_{ab}\neq 0$ and $P_{ab}\neq 0$ separately.

\subsubsection{$P_{ab}\neq 0$}

\begin{thm}\label{thm5.4}Equation IV with $P_{ab}\neq 0$ has a solution
$$x=(Ta+b)^{-1}(aN-c),$$
where $T$ is a real solution of the following  cubic equation
\begin{equation}\label{Tab}4P_{ab}^2T^3+[4P_{ab}(2P_{ac}+I_b)]T^2+[4P_{ab}P_{bc}
+(2P_{ac}+I_b)^2]T+2P_{bc}(2P_{ac}+I_b)-2P_{ab}I_c=0. \end{equation}
and
\begin{equation}\label{Nab}N=\frac{2P_{ab}T^2+(2P_{ac}+I_b)T+2P_{bc}}{2P_{ab}}. \end{equation}
\end{thm}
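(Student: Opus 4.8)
The plan is to specialize the master real system (\ref{rsym1}) to the coefficients of Equation IV and then eliminate $N$. First I would record that because $a=1+a_2\bj+a_3\bk$ with $a_2^2+a_3^2=1$, one has $I_a=0$, so the two equations of (\ref{rsym1}) collapse precisely to (\ref{1ieq1}) and (\ref{1ieq2}). I would also recall the meaning of these equations: whenever $Ta+b\in\bh_s-Z(\bh_s)$, the linearization (\ref{linearx1}) forces any $SI$-solution to have the form $x=(Ta+b)^{-1}(aN-c)$ with $T=x+\bar x$ and $N=\bar xx$, and the computation leading to (\ref{enf1})--(\ref{enf2}) shows that (\ref{rsym1}) is exactly the pair of consistency conditions $\bar xx=N$ and $x+\bar x=T$ for this $x$. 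Thus solving Equation IV in $SI$ reduces to finding real pairs $(T,N)$ solving (\ref{1ieq1})--(\ref{1ieq2}) with $Ta+b$ invertible.

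Next, since $P_{ab}\neq 0$, the equation (\ref{1ieq2}) is linear in $N$, so I would solve it to obtain $N$ as the explicit quadratic-in-$T$ expression (\ref{Nab}). Substituting this $N$ into (\ref{1ieq1}) and clearing the denominator by multiplying through by $2P_{ab}$ turns the system into a single polynomial identity in $T$. Expanding $(2P_{ab}T^2+(2P_{ac}+I_b)T+2P_{bc})(2P_{ab}T+(2P_{ac}+I_b))-2P_{ab}I_c$ and collecting powers of $T$ yields exactly the cubic (\ref{Tab}). This elimination is pure bookkeeping, and it is the step that converts the coupled nonlinear system into a single univariate cubic.

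Finally I would establish sufficiency. Given a real root $T$ of (\ref{Tab}) with $Ta+b\notin Z(\bh_s)$, set $N$ by (\ref{Nab}) and $x=(Ta+b)^{-1}(aN-c)$. By construction $(T,N)$ solves (\ref{1ieq1})--(\ref{1ieq2}), hence the consistency relations (\ref{enf1})--(\ref{enf2}) give $\bar xx=N$ and $x+\bar x=T$, so $2x_0=T$ and $I_x=N$. Using the universal identity $x^2=2x_0x-I_x=Tx-N$ together with $aT=Ta$ (valid since $T\in\br$), I compute $ax^2+bx+c=a(Tx-N)+bx+c=(Ta+b)x-(aN-c)=0$, confirming that $x$ is a genuine root of Equation IV.

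The hard part is not the elimination but the invertibility proviso. A real root $T$ of (\ref{Tab}) may satisfy $Ta+b\in Z(\bh_s)$, in which case the formula $(Ta+b)^{-1}(aN-c)$ is meaningless and the corresponding value of $x_0$ instead belongs to the $SZ$ analysis handled by Theorems \ref{thm5.1}--\ref{thm5.3}; exactly as in Proposition \ref{mainprop1} for Equation II, one must test and discard any $T$ for which $Ta+b$ is a zero divisor before applying the formula. Since the cubic has odd degree it always possesses a real root, but the existence of an honest $SI$-solution is only secured once such a root with $Ta+b$ invertible has been exhibited.
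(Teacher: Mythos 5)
Your proof follows the same route as the paper's: since $I_a=0$ the system (\ref{rsym1}) reduces to (\ref{1ieq1})--(\ref{1ieq2}), you solve the second equation for $N$ (possible because $P_{ab}\neq 0$) and substitute into the first to obtain the cubic (\ref{Tab}). Your explicit sufficiency check via $x^2=Tx-N$ and your caveat that a real root $T$ with $Ta+b\in Z(\bh_s)$ must be tested and discarded are both correct and in fact more careful than the paper's three-line proof, which leaves these points implicit.
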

\begin{proof}

If $P_{ab}\neq 0$ then by (\ref{1ieq2}) we get
\begin{equation}\label{Nab2}N=\frac{2P_{ab}T^2+(2P_{ac}+I_b)T+2P_{bc}}{2P_{ab}}. \end{equation}
Substituting the above $N$  in   (\ref{1ieq1}), we obtain
\begin{equation}\label{Tab2}4P_{ab}^2T^3+[4P_{ab}(2P_{ac}+I_b)]T^2+[4P_{ab}P_{bc}+(2P_{ac}+I_b)^2]T
+2P_{bc}(2P_{ac}+I_b)-2P_{ab}I_c=0. \end{equation}
Let $T$ be a real solution of the above cubic equation. Then the corresponding solution is
$$x=(Ta+b)^{-1}(aN-c).$$
\end{proof}

\begin{exam}\label{exam5.7}
	Consider the quadratic equation $(1+\bj)x^2 +(\bi+\bj)x-1+\bi=0$.
That is, $a=1+\bj,b=\bi+\bj$ and $c=-1+\bi$.   $P_{ab}=-1$. In this case
$T=-2, N=1$ and $$x=(Ta+b)^{-1}(aN-c)=-1.$$
\end{exam}
Combining this example with Example 5.2, we know that the set of solution of the equation $$(1+\bj)x^2 +(\bi+\bj)x-1+\bi=0$$
is
$$\{-1\}\cup \{x=x_1\bi+x_1\bj+\bk,\forall x_1\in \br\}.$$

\subsubsection{$P_{ab}=0$}
\begin{thm}\label{thm5.5}Equation IV with $P_{ab}=0$ and $I_b+2P_{ac}\neq 0$
is solvable  and   $$x=(Ta+b)^{-1}(aN-c),$$  where $$N=\frac{I_c}{I_b+2P_{ac}},T= \frac{-2P_{bc}}{I_b+2P_{ac}}.$$
\end{thm}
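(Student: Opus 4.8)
The plan is to follow the same route used for the other $SI$ theorems (compare the proofs of Theorem \ref{thm3.2} and Theorem \ref{thm5.4}): a solution lying in $SI$ forces its invariants $T=2x_0$ and $N=I_x$ to satisfy the simplified nonlinear system (\ref{1ieq1})--(\ref{1ieq2}); I would solve that system explicitly under the present hypotheses and then recover $x$ from the linear reformulation (\ref{linearx1}).

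First I would specialize the system. Since $a=1+a_2\bj+a_3\bk\in Z(\bh_s)$ we have $I_a=0$, so (\ref{rsym1}) has already collapsed to (\ref{1ieq1})--(\ref{1ieq2}). Imposing the additional hypothesis $P_{ab}=0$ makes both equations linear and decoupled: (\ref{1ieq1}) becomes $N(I_b+2P_{ac})=I_c$ and (\ref{1ieq2}) becomes $(2P_{ac}+I_b)T+2P_{bc}=0$. Because $I_b+2P_{ac}\neq 0$, each of these has a unique solution, giving precisely
$$N=\frac{I_c}{I_b+2P_{ac}},\qquad T=\frac{-2P_{bc}}{I_b+2P_{ac}}.$$

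Next I would check that $Ta+b$ is invertible, which is exactly what is needed to write $x=(Ta+b)^{-1}(aN-c)$ from (\ref{linearx1}). Using $I_a=0$ and $P_{ab}=0$ one computes $I_{Ta+b}=T^2I_a+2TP_{ab}+I_b=I_b$, so $Ta+b\in\bh_s-Z(\bh_s)$ exactly when $I_b\neq 0$; this is the implicit hypothesis for the $SI$ branch, since a genuine $SI$ solution has $2x_0a+b=Ta+b$ invertible and here $I_{Ta+b}=I_b$. Granting $I_b\neq 0$, the split quaternion $x=(Ta+b)^{-1}(aN-c)$ is well defined.

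Finally I would verify that this $x$ is an honest root rather than a mere candidate, and this consistency step is the main point to get right. By construction $(Ta+b)x=aN-c$. Since the pair $(T,N)$ solves (\ref{1ieq1})--(\ref{1ieq2}), and these are precisely the conditions $x+\bar x=T$ and $\bar x x=N$ obtained by substituting the formula for $x$ into (\ref{Nf1}) and (\ref{Tf1}), the constructed $x$ satisfies $2x_0=x+\bar x=T$ and $I_x=\bar x x=N$. Consequently $(2x_0a+b)x=(Ta+b)x=aN-c=aI_x-c$, which is (\ref{linearx1}), and using $x^2=2x_0x-I_x$ this is equivalent to $ax^2+bx+c=0$. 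Thus $x$ is an $SI$ solution, establishing solvability; once $I_b\neq 0$ is in force, everything beyond this consistency check is a direct computation.
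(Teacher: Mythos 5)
Your proposal is correct and follows essentially the same route as the paper: specialize the system (\ref{rsym1}) to (\ref{1ieq1})--(\ref{1ieq2}) using $I_a=0$ and $P_{ab}=0$, solve the resulting decoupled linear equations for $N$ and $T$ under the hypothesis $I_b+2P_{ac}\neq 0$, note that $I_{Ta+b}=I_b\neq 0$ on the $SI$ branch, and recover $x=(Ta+b)^{-1}(aN-c)$. The only difference is that you spell out the converse verification (that the candidate $x$ built from a solution $(T,N)$ really satisfies $ax^2+bx+c=0$), which the paper leaves implicit; this is a welcome but not route-changing addition.
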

\begin{proof}
Since $\left\langle 2x_0a+b,2x_0a+b \right\rangle=4x_0P_{ab}+I_b\neq 0$ and $P_{ab}=0$,
  we have $I_b\neq 0$.
       If $P_{ab}=0$ then by (\ref{1ieq1}) and (\ref{1ieq2}),  $(T,N)$ satisfies the real system
      \begin{eqnarray}
     N(I_b+2P_{ac})=I_c,\label{ieq1}\\
     (2P_{ac}+I_b)T=-2P_{bc}.\label{ieq2}
      \end{eqnarray}
     If $I_b+2P_{ac}\neq 0$ then $$N=\frac{I_c}{I_b+2P_{ac}},T= \frac{-2P_{bc}}{I_b+2P_{ac}}.$$
So the corresponding solution is
      $$x=(Ta+b)^{-1}(aN-c).$$
\end{proof}
\begin{exam}\label{exam5.8}
	Consider the quadratic equation $(1+\bj)x^2 +(2\bi+\bk)x+1+\bi+2\bj+\bk= 0$.
 That is, $a=1+\bj,b=2\bi+\bk$ and $c=1+\bi+2\bj+\bk$.  $P_{ab}=0, I_b+2P_{ac}=1$. In this case
$T=-2, N=-3$ and $$x=(Ta+b)^{-1}(aN-c)=-1+\frac{17}{3}\bi+\frac{1}{3}\bj+6\bk.$$
  \end{exam}

To treat  the case of $I_b+2P_{ac}=0$, we need the following  proposition.

\begin{pro}\label{prop5.3}
For the coefficients $a,b,c$ in Equation IV, we assume that
$$P_{ab}=0,I_a=0,I_c=0,P_{bc}=0,I_b+2P_{ac}=0, I_b\neq 0.$$
Then we have
\begin{equation}\label{formu1}\frac{(a_2b_1+b_3)^2+(b_2-a_3b_1)^2}{\delta^2}=1,\end{equation}
\begin{equation}\label{formu2}\frac{a_3(b_2-a_3b_1)-a_2(a_2b_1+b_3)}{\delta}=-1,\end{equation}
\begin{equation}\label{formu3}\frac{a_3(a_2b_1+b_3)+a_2(b_2-a_3b_1)}{\delta}=0,\end{equation}
\begin{equation}\label{formu4}\frac{2t_2(a_2b_1+b_3)+2t_1(b_2-a_3b_1)}{\delta^2}
+\frac{b_3(b_2-a_3b_1)-b_2(a_2b_1+b_3)+2a_3t_1-2a_2t_2}{\delta}=0,\end{equation}
\begin{equation}\label{formu5}\frac{2t_2(a_3b_1-b_2)+2t_1(a_2b_1+b_3)}{\delta^2}
+\frac{(a_2b_1+b_3)b_3-b_2(a_3b_1-b_2)}{\delta}-b_1=0.\end{equation}
\end{pro}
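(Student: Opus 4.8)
The plan is to convert every hypothesis into a relation among the real coordinates of $a,b,c$ and then verify the five identities by direct substitution, the whole computation being driven by two elementary ``reduction identities''. First I would record the coordinate form of the hypotheses: $I_a=0$ gives $a_2^2+a_3^2=1$, while $P_{ab}=\langle a,b\rangle=-(a_2b_2+a_3b_3)=0$ gives $a_2b_2+a_3b_3=0$. These two facts alone drive most of the work; the remaining hypotheses $I_c=0$, $P_{bc}=0$ and $I_b+2P_{ac}=0$ (with $I_b\neq0$) enter only at the very end.

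The key step is to prove the two reduction identities $a_2b_1+b_3=a_2\delta$ and $b_2-a_3b_1=-a_3\delta$ (equivalently $a_3b_1-b_2=a_3\delta$), where $\delta=a_2b_3-a_3b_2+b_1$. Each follows by expanding $a_2\delta$ (resp. $-a_3\delta$), substituting $a_2^2-1=-a_3^2$ (resp. $a_3^2-1=-a_2^2$), and collecting the remainder into the multiple $-a_3(a_2b_2+a_3b_3)$ (resp. $-a_2(a_2b_2+a_3b_3)$), which vanishes by $P_{ab}=0$. In the same spirit I would record two auxiliary facts: $b_2^2+b_3^2=(a_2b_3-a_3b_2)^2$ (add $(a_2b_2+a_3b_3)^2=0$ to $(a_2b_3-a_3b_2)^2$ and factor out $a_2^2+a_3^2=1$), and $a_2t_1+a_3t_2=-P_{ac}$ (a one-line expansion in which the $c_1$ terms cancel and $c_0(a_2^2+a_3^2)=c_0$). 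Writing $\lambda:=a_2b_3-a_3b_2=\delta-b_1$, the first fact yields the factorization $I_b=b_1^2-(b_2^2+b_3^2)=(b_1-\lambda)\delta$; since $I_b\neq0$ this also forces $\delta\neq0$, so every denominator in the statement is legitimate.

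With these in hand the first three identities are immediate. For (\ref{formu1}) the numerator becomes $(a_2\delta)^2+(-a_3\delta)^2=(a_2^2+a_3^2)\delta^2=\delta^2$; for (\ref{formu2}) it becomes $a_3(-a_3\delta)-a_2(a_2\delta)=-\delta$; for (\ref{formu3}) it becomes $a_3(a_2\delta)+a_2(-a_3\delta)=0$. Identity (\ref{formu4}) needs one further cancellation: after substitution its first term is $\frac{2(a_2t_2-a_3t_1)}{\delta}$, while in the second term the $b$-quadratic part collapses to $-\delta(a_2b_2+a_3b_3)=0$, leaving $\frac{2a_3t_1-2a_2t_2}{\delta}$, which is exactly the negative of the first term.

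The main obstacle is the last identity (\ref{formu5}), as it is the only place where the spectral hypotheses are indispensable. After the reduction identities its first term is $\frac{2(a_2t_1+a_3t_2)}{\delta}$ and its second term simplifies to $\frac{\delta(a_2b_3-a_3b_2)}{\delta}=\delta-b_1$, so the left-hand side equals $\frac{2(a_2t_1+a_3t_2)}{\delta}+\delta-2b_1$. Here I would invoke $a_2t_1+a_3t_2=-P_{ac}$ together with $I_b+2P_{ac}=0$ to replace $2(a_2t_1+a_3t_2)$ by $I_b$, and then the factorization $I_b=(b_1-\lambda)\delta$ to replace $\frac{I_b}{\delta}$ by $b_1-\lambda=2b_1-\delta$, after which the three terms cancel to $0$. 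The delicate point worth double-checking is precisely this confluence of the $t_i$-algebra, the relation $I_b=-2P_{ac}$, and the factorization of $I_b$, which must be assembled in exactly this order for the cancellation to occur; note that $I_c=0$ and $P_{bc}=0$ are not needed for the identities themselves and merely record the regime in which the proposition is applied.
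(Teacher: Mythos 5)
Your proof is correct and takes essentially the same route as the paper: both arguments reduce everything to $a_2^2+a_3^2=1$ and $a_2b_2+a_3b_3=0$ (your reduction identities $a_2b_1+b_3=a_2\delta$, $b_2-a_3b_1=-a_3\delta$ are exactly the relations the paper uses for (\ref{formu4})), and both close (\ref{formu5}) via $a_2t_1+a_3t_2=-P_{ac}$ combined with $I_b=-2P_{ac}$. Your explicit remark that $I_b=(b_1-\lambda)\delta\neq 0$ forces $\delta\neq 0$ is a small but welcome addition that the paper defers to the proof of Theorem 5.6.
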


\begin{proof}
By $a_2^2+a_3^2=1$ and $a_2b_2+a_3b_3=0$, we can easily verify Eqs.(\ref{formu1})-(\ref{formu3}).
Noting that $b_3(b_2-a_3b_1)-b_2(a_2b_1+b_3)=-b_1(a_2b_2+a_3b_3)=0$, $a_2b_1+b_3-a_2\delta=a_3(a_2b_2+a_3b_3)=0$
 and   $b_2-a_3b_1+a_3\delta=a_2(a_2b_2+a_3b_3)=0$,
  we have
  $$2t_2(a_2b_1+b_3)+2t_1(b_2-a_3b_1)+(2a_3t_1-2a_2t_2)\delta=2(a_2b_1+b_3-a_2\delta)t_2+2(b_2-a_3b_1+a_3\delta)t_1=0.$$
This proves Eq.(\ref{formu4}).
It is obvious that
$$\frac{(a_2b_1+b_3)b_3-b_2(a_3b_1-b_2)}{\delta}-b_1=\frac{b_2^2+b_3^2-b_1^2}{\delta}=\frac{-I_b}{\delta}.$$
By $a_2^2+a_3^2=1$ and $a_2b_2+a_3b_3=0$, we have
$$b_3t_1-b_2t_2+P_{ac}(a_2b_3-a_3b_2)=0.$$
Noting $a_2t_1+a_3t_2=-P_{ac}$ and $-I_b=2P_{ac}$,
 we have $$t_2(a_3b_1-b_2)+t_1(a_2b_1+b_3)+P_{ac}\delta=(a_3t_2+a_2t_1)b_1+b_3t_1-b_2t_2+P_{ac}(a_2b_3-a_3b_2+b_1)=0.$$
This proves Eq.(\ref{formu5}).
\end{proof}

\begin{thm}\label{thm5.6}Consider Equation IV with $P_{ab}=0$ and $I_b+2P_{ac}=0$.
Let \begin{equation}\label{condpab02} F=t_1^2+t_2^2+(b_3t_1-b_2t_2)\delta+c_0\delta^2.\end{equation}
 Equation IV
is solvable if only if $F=0$.
 If $F=0$ then  Equation IV has solutions $$x=x_0+x_1\bi+x_2\bj+x_3\bk,\forall x_0,x_1\in \br,$$  where
   \begin{eqnarray}
  \label{sx2e} x_2&=&-\frac{t_2}{\delta}-\frac{(a_2b_1+b_3)}{\delta}x_0-\frac{(a_3b_1-b_2)}{\delta}x_1,\\
  \label{sx3e}x_3&=&\frac{t_1}{\delta}+\frac{(b_2-a_3b_1)}{\delta}x_0+\frac{(a_2b_1+b_3)}{\delta}x_1.
\end{eqnarray}
\end{thm}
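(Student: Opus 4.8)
The plan is to work directly with the real system (\ref{rsym2}), specialized to Equation IV as the equations (\ref{ae1})--(\ref{ae4}), rather than with the nonlinear system (\ref{rsym1}), since the latter degenerates in this regime. First I would record the standing facts $a_2^2+a_3^2=1$ (from $I_a=0$) and $a_2b_2+a_3b_3=0$ (from $P_{ab}=0$). Because $x\in SI$ means $2x_0a+b\notin Z(\bh_s)$, and here $\langle 2x_0a+b,2x_0a+b\rangle=4x_0P_{ab}+I_b=I_b$, any solution in $SI$ forces $I_b\neq0$. Combining this with the algebraic identity $(a_2b_1+b_3)^2+(b_2-a_3b_1)^2=\delta^2$ of (\ref{formu1}) shows $\delta\neq0$: if $\delta=0$ then $b_2=a_3b_1,\ b_3=-a_2b_1$, whence $I_b=b_1^2(1-a_2^2-a_3^2)=0$, a contradiction. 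I would also note that specializing the degenerate system (\ref{1ieq1})--(\ref{1ieq2}) to $P_{ab}=0,\ I_b+2P_{ac}=0$ forces the necessary conditions $I_c=0$ (from (\ref{1ieq1})) and $P_{bc}=0$ (from (\ref{1ieq2})); granting these, together with $\delta\neq0$, all the hypotheses of Proposition \ref{prop5.3} are in force.

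Since $P_{ab}=I_a=0$, Proposition \ref{prop5.1} applies, giving the two linear relations (\ref{new3})--(\ref{new4}) among $x_0,x_1,x_2,x_3$. As $\delta\neq0$, I would solve these for $x_2$ and $x_3$ in terms of $x_0$ and $x_1$; this reproduces exactly the formulas (\ref{sx2e}) and (\ref{sx3e}) of the statement, so every solution has this shape and $x_0,x_1$ are the only candidate free parameters. It then remains to substitute these expressions back into the two genuinely quadratic equations (\ref{ae1}) and (\ref{ae2}) and to decide when they hold identically in $x_0,x_1$.

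This substitution is the heart of the argument. Writing (\ref{ae2}) as $x_0(2x_1+2a_3x_2-2a_2x_3+b_1)+(b_3x_2-b_2x_3)+c_1$ and inserting (\ref{sx2e})--(\ref{sx3e}), the coefficients of $x_0$ and $x_1$ cancel (using $a_2b_2+a_3b_3=0$ and $a_2b_3-a_3b_2=\delta-b_1$) and the constant collapses to $P_{bc}/\delta$; hence (\ref{ae2}) holds precisely because $P_{bc}=0$. For (\ref{ae1}) the same substitution produces a quadratic polynomial in $x_0,x_1$, and here Proposition \ref{prop5.3} is used decisively: identity (\ref{formu1}) forces the $x_0^2,\ x_1^2$ and $x_0x_1$ contributions of $x_2^2+x_3^2$ to combine with the term $x_0^2+2x_0(a_2x_2+a_3x_3)$ so that all second-order terms vanish, identity (\ref{formu4}) annihilates the coefficient of $x_0$, and identity (\ref{formu5}) annihilates the coefficient of $x_1$. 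What survives is exactly the constant $\frac{t_1^2+t_2^2+(b_3t_1-b_2t_2)\delta+c_0\delta^2}{\delta^2}=\frac{F}{\delta^2}$ with $F$ as in (\ref{condpab02}), so (\ref{ae1}) holds for every $x_0,x_1$ if and only if $F=0$.

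Assembling these gives both directions: a solution in $SI$ forces, after the reductions above, $F=0$; conversely, if $F=0$ then defining $x_2,x_3$ by (\ref{sx2e})--(\ref{sx3e}) with $x_0,x_1$ arbitrary makes (\ref{new3})--(\ref{new4}) hold by construction and (\ref{ae1}),(\ref{ae2}) hold by $F=0$ and $P_{bc}=0$, whence (\ref{ae3}),(\ref{ae4}) are recovered as the Proposition \ref{prop5.1} combinations of (\ref{ae1}),(\ref{ae2}),(\ref{new3}),(\ref{new4}); thus every such $x$ solves the equation and lies in $SI$ since $I_b\neq0$. The main obstacle is purely computational, namely verifying that every $x_0^2,\ x_1^2,\ x_0x_1,\ x_0,\ x_1$ term in the expansion of (\ref{ae1}) cancels. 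I expect this to be tractable only by isolating the identities (\ref{formu1}), (\ref{formu4}), (\ref{formu5}) beforehand---which is exactly the reason Proposition \ref{prop5.3} is established first---rather than expanding the substitution blindly.
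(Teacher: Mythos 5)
Your proposal is correct and follows essentially the same route as the paper: establish $I_c=P_{bc}=0$, $I_b\neq 0$ and $\delta\neq 0$, use Proposition \ref{prop5.1} to express $x_2,x_3$ linearly in $x_0,x_1$ via (\ref{sx2e})--(\ref{sx3e}), and substitute back, invoking the identities of Proposition \ref{prop5.3} so that everything collapses to $F/\delta^2=0$. The only difference is that you also explicitly verify Eq.~(\ref{ae2}) (whose constant term reduces to $P_{bc}/\delta$), a step the paper's proof omits even though it is needed to recover (\ref{ae3}) and (\ref{ae4}) from the Proposition \ref{prop5.1} combinations; your version is therefore slightly more complete on this point.
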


\begin{proof}
Suppose that there is a solution $x\in SI$.
By Eq.(\ref{ieq1}) and Eq.(\ref{ieq2}), if $I_b+2P_{ac}=0$ then $$I_c=0,P_{bc}=0,I_b\neq 0.$$
In this special case, although $2x_0a+b\in \bh_s-Z(\bh_s)$,
 however Eq.(\ref{ieq1}) and Eq.(\ref{ieq2})  provide no information about $N$ and $T$. So we return to the original equation.

  By Proposition \ref{prop5.1}, under the condition  $P_{ab}=0$ and $I_a=0$, we have
      \begin{eqnarray}
  (b_2-a_3b_1)x_0+(a_2b_1+b_3)x_1-\delta x_3+t_1=0,\label{6new3}\\
 (a_2b_1+b_3)x_0+(a_3b_1-b_2)x_1+\delta x_2+t_2=0.\label{6new4}
\end{eqnarray}
 Since $P_{ab}=0,a_2^2+a_3^2=1$ and $I_b\neq 0$,
  we obtain $$b_1^2-(a_3b_2-a_2b_3)^2=b_1^2-b_2^2-b_3^2+(a_2b_2+a_3b_3)^2=I_b\neq 0.$$
  This means $\delta=a_2b_3-a_3b_2+b_1\neq 0$.
  So we have
      \begin{eqnarray*}
   x_2=-\frac{(a_2b_1+b_3)}{\delta}x_0-\frac{(a_3b_1-b_2)}{\delta}x_1-\frac{t_2}{\delta},\\
  x_3=\frac{(b_2-a_3b_1)}{\delta}x_0+\frac{(a_2b_1+b_3)}{\delta}x_1+\frac{t_1}{\delta}.
\end{eqnarray*}
  Substituting  the above two formulas of $x_2$ and $x_3$ in Eq. (8),
   that is,  $$x_0^2-x_1^2+x_2^2+x_3^2+2a_2x_0x_2+2a_3x_0x_3-b_1x_1+b_2x_2+b_3x_3+c_0=0,$$
  we obtain
   \begin{eqnarray*}
  &&\Big[1+\frac{(a_2b_1+b_3)^2+(b_2-a_3b_1)^2}{\delta^2}+2\frac{a_3(b_2-a_3b_1)-a_2(a_2b_1+b_3)}{\delta}\Big]x_0^2\\
  &&+\Big[\frac{(a_2b_1+b_3)^2+(b_2-a_3b_1)^2}{\delta^2}-1\Big]x_1^2+\frac{a_3(a_2b_1+b_3)+a_2(b_2-a_3b_1)}{\delta}x_0x_1\\
    &&+\Big[\frac{2t_2(a_2b_1+b_3)+2t_1(b_2-a_3b_1)}{\delta^2}+\frac{b_3(b_2-a_3b_1)-b_3(a_2b_1+b_3)+2a_3t_1-2a_2t_2}{\delta}\Big]x_0\\
    &&+\Big[\frac{2t_2(a_3b_1-b_2)+2t_1(a_2b_1+b_3)}{\delta^2}+\frac{(a_2b_1+b_3)b_3-b_2(a_3b_1-b_2)}{\delta}-b_1\Big]x_1\\
    &&+\frac{t_1^2+t_2^2}{\delta^2}+\frac{b_3t_1-b_2t_2}{\delta}+c_0=0.\end{eqnarray*}
    By Proposition \ref{prop5.3}, if $F=0$ then  the above equation is  an identical equation.
  Thus Equation VI has solutions   $$x=x_0+x_1\bi+x_2\bj+x_3\bk,\forall x_0,x_1\in \br,$$  where
$x_2$ and $x_3$ are given by (\ref{sx2e}) and (\ref{sx3e}).
\end{proof}

\begin{exam}\label{exam5.9}
	Consider the quadratic equation $(1+\bj)x^2 +(2\bi+\bk)x-\frac{3}{4}+\frac{3}{4}\bj= 0$.
 That is, $a=1+\bj,b=2\bi+\bk$ and $c=-\frac{3}{4}+\frac{3}{4}\bj$.
  It is obvious that $P_{ab}=0,I_c=0,P_{bc}=0,I_b+2P_{ac}=0,I_b\neq 0$. Then
$\delta=3,t_1=\frac{3}{2},t_2=0, F=0$ and $$x=x_0+x_1\bi-x_0\bj+(x_1+\frac{1}{2})\bk,\forall x_0,x_1\in \br.$$
\end{exam}

\vspace{2mm}
{\bf Acknowledgments.}\quad
 This work is supported by Natural Science Foundation of China (11871379),
 the Innovation Project of Department of Education of Guangdong Province (2018KTSCX231) and
  Key project of  National Natural Science Foundation  of Guangdong Province Universities (2019KZDXM025).

\end{document}